\newtheorem{theorem}{Theorem}[section]
\newtheorem{lemma}[theorem]{Lemma}
\theoremstyle{definition}
\theoremstyle{remark}
\newtheorem{remark}[theorem]{Remark}
\numberwithin{equation}{section}
\newcommand{\norm}[1]{\lVert#1\rVert}
\newcommand{\Dr}[2]
{\bigg(\begin{matrix} #1 \\ #2  \end{matrix}\bigg)}
\newcommand{\cL}{\mathcal{L}}
\newcommand{\R}{\mathbb{R}}
\newcommand{\bS}{\mathbb{S}}
\newcommand{\bH}{\mathbb{H}}
\newcommand{\Z}{\mathbb{Z}}
\newcommand{\T}{\mathbb{T}}
\newcommand{\e}{\varepsilon}
\newcommand{\txt}[1]{\text{#1}}
\begin{document}

\title{Regularity of Homogenized Boundary Data\\ 
in Periodic Homogenization of Elliptic Systems}


\author{
Zhongwei Shen\thanks{Supported in part by NSF grant DMS-1600520.}
\qquad Jinping Zhuge \thanks{Supported in part by NSF grant DMS-1600520.}}



\date{ }

\maketitle


\begin{abstract}
This paper is concerned with periodic homogenization of second-order elliptic systems in divergence form 
with oscillating Dirichlet data or Neumann data of first order. 
We prove that  the homogenized boundary data  belong to $W^{1, p}$ for any
$1<p<\infty$.
In particular, this implies that the boundary layer tails are H\"older continuous of order
$\alpha$ for any $\alpha \in (0,1)$.

\medskip

\noindent{\it MSC2010:} 35B27, 74Q05.

\medskip

\noindent{\it Keywords:} Homogenization; Boundary Layers; Oscillating Boundary Data.

\end{abstract}


\section{Introduction}

In this paper we consider the uniformly elliptic operator with periodically oscillating coefficients
\begin{equation*}
	\cL_\e = -\txt{div} (A(x/\e) \nabla) = - \frac{\partial}{\partial x_i} \bigg\{ a^{\alpha\beta}_{ij} \Big( \frac{x}{\e}\Big) \frac{\partial}{\partial x_j}\bigg\},
\end{equation*}
where $1\le i,j\le d$, $1\le \alpha,\beta\le m$, and $0<\e \le 1$. 
Throughout we assume that the coefficient matrix $A$ satisfies the following conditions:
\begin{itemize}

\item

Ellipticity: there exists $\lambda>0$ such that
\begin{equation}\label{ellipticity}
	\lambda |\xi|^2 \le a_{ij}^{\alpha\beta} \xi_i^\alpha \xi_j^\beta \le \lambda^{-1}|\xi|^2 \quad \txt{for any } \xi = (\xi_i^\alpha) \in \R^{m\times d};
\end{equation}

\item

Periodicity: $A$ is 1-periodic, that is
\begin{equation}\label{periodicity}
	A(y+z) = A(y) \quad \txt{for any }  y\in \R^d \text{ and } z\in \Z^d;
\end{equation}

\item

Smoothness: 
\begin{equation}\label{smoothness}
a_{ij}^{\alpha\beta} \in C^\infty(\T^d) \quad \text{ for } 1\le \alpha, \beta\le m \text{ and }
1\le i,j\le d.
\end{equation}

\end{itemize}
Recently, there has been considerable interest in the study of homogenization of Dirichlet problem
with oscillating boundary data,
\begin{equation}\label{DP-0}
\left\{
\aligned
\mathcal{L}_\e (u_\e) & =0 & \quad &\text{ in } \Omega,\\
u_\e (x)  & = f(x, x/\e) &\quad &\text{ on } \partial\Omega,
\endaligned
\right.
\end{equation}
where $f(x, y)$ is 1-periodic in $y$ 
\cite{GerMas11, GerMas12, Pra13, KLS14,
ASS-2013, ASS-2014, Ass-2015, Alek16, AKMP16, ShenZhuge16, Z17} 
(also see earlier work in \cite{MV97, Moskow-Vogelius-2, AA99}
as well as related work for nonlinear elliptic equations in \cite{Feldman-2014, Choi-Kim-2014, Kim-2015, FK17}).
In particular, under the assumption 
that $\Omega$ is a smooth and strictly convex domain in $\R^d$,
 it was proved in \cite{GerMas12} that the homogenized problem for (\ref{DP-0}) is given 
by 
\begin{equation}\label{DP-h}
\left\{
\aligned
\mathcal{L}_0 (u_0) & =0 & \quad &\text{ in } \Omega,\\
u_0  & = \overline{f} &\quad &\text{ on } \partial\Omega,
\endaligned
\right.
\end{equation}
where $\mathcal{L}_0$ is the usual homogenized operator and
$\overline{f} $ is a function whose value at $x\in \partial\Omega$ depends only on 
$A$, $f(x, \cdot)$ and the outward normal $n$ to $\partial\Omega$ at $x$.
Moreover, a convergence rate for $\| u_\e -u_0\|_{L^2(\Omega)}$
was established in \cite{GerMas12}.
The near sharp convergence rates were obtained in \cite{AKMP16} for $d\ge 4$ 
and in \cite{ShenZhuge16} for $d=2$ or $3$.
Furthermore, the present authors in \cite{ShenZhuge16} considered the Neumann problem
with first-order oscillating boundary data,
\begin{equation}\label{NP-0}
\left\{
\aligned
\mathcal{L}_\e (u_\e) & =0  &\quad& \text{ in } \Omega,\\
\frac{\partial u_\e}{\partial \nu_\e}  &=
T_{ij} \cdot \nabla_x \big\{ g_{ij}(x, x/\e) \big\} 
&\quad & \text{ on } \partial\Omega,
\endaligned
\right.
\end{equation}
where $T_{ij}=n_i e_j -n_j e_j$ is a tangential vector field on $\partial\Omega$
and $\{ g_{ij} (x, y)\} $ are 1-periodic in $y$.
It was proved in \cite{ShenZhuge16} that 
if $\Omega$ is smooth and strictly convex,
the homogenized problem for (\ref{NP-0}) is given 
by 
\begin{equation}\label{NP-h}
\left\{
\aligned
\mathcal{L}_0 (u_0) & =0  &\quad& \text{ in } \Omega,\\
\frac{\partial u_0}{\partial \nu_0}  &=
T_{ij} \cdot \nabla_x  \overline{g}_{ij}
&\quad & \text{ on } \partial\Omega,
\endaligned
\right.
\end{equation}
where $\frac{\partial u_0}{\partial\nu_0}$ denotes the conormal derivative of $u_0$ associated with
 $\mathcal{L}_0$, and $\{ \overline{g}_{ij} \}$ 
are functions on $\partial\Omega$ whose value at $x\in \partial\Omega$
depend only on $A$, $\{ g_{ij}(x, \cdot )\}$ and $n(x)$.
Assume that $\int_\Omega u_\e =\int_\Omega u_0=0$.
The near optimal rate of convergence 
for $\| u_\e -u_0\|_{L^2(\Omega)}$ was also established in \cite{ShenZhuge16}
for $d\ge 3$.
In \cite{Z17} the second author investigated the case of non-convex domains and 
extended the results in \cite{AKMP16, ShenZhuge16} for Dirichlet problems
to certain domains of finite type.
We point out that one of main motivations for studying 
boundary value problems (\ref{DP-0}) and (\ref{NP-0}) with oscillating data
is its applications to the higher-order convergence in the two-scale expansions
of solutions to boundary value problems with non-oscillating boundary data.

Our primary concern in this paper is the regularity of the homogenized data
$\overline{f}$ in (\ref{DP-h}) and
$\{ \overline{g}_{ij} \}$ in (\ref{NP-h}).
Under the assumption that $\Omega$ is smooth and strictly convex,
it was proved in \cite{GerMas12} that 
$\nabla_{\tan} \overline{f}\in L^{p, \infty}(\partial\Omega)$ with $p=\frac{d-1}{2}$.
The result was improved in \cite{AKMP16} to
$\nabla_{\tan}  \overline{f} \in L^{p, \infty}(\partial\Omega)$
with $p=\frac{2(d-1)}{3}$ if $d\ge 3$, and to $\overline{f}
 \in W^{1, p}(\partial\Omega)$ for any $p<\frac{2}{3}$ if $d=2$.
 Further improvement was made in \cite{ShenZhuge16}, where we proved that
 $\overline{f}\in W^{1, p}(\partial\Omega)$ for any $p<d-1$ and $d\ge 2$.
 In \cite{ShenZhuge16} we also obtained the regularity estimate
 $\overline{g}_{ij}\in W^{1, p}(\partial\Omega)$ for the Neumann problem 
 (\ref{NP-0}), where $p<d-1$ and $d\ge 3$.

The following two theorems are the main results of this paper.

\begin{theorem}[Dirichlet Data]\label{main-thm-01}
Assume that $A$ satisfies (\ref{ellipticity}), (\ref{periodicity}) and (\ref{smoothness}).
Let $\Omega$ be a smooth and strictly convex domain in $\R^d$.
Let $\overline{f}$ denote the homogenized data in (\ref{DP-h}).
Then
\begin{equation}\label{main-estimate-01}
\| \overline{f}\|_{W^{1, p}(\partial\Omega)}
\le C_p \left(\int_{\T^d}  \| f(\cdot, y)\|^2_{C^1(\partial\Omega)}\, dy\right)^{1/2}
\quad \text{ for any } 1<p<\infty,
\end{equation}
where $C_p$ depends only on $d$, $m$, $\lambda$, $p$, and
$\| A\|_{C^k(\T^d)}$ for some $k=k(d, p)>1$.
\end{theorem}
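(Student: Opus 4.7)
The plan is to combine a Fourier series expansion of the periodic boundary data in its fast variable with sharp $L^p$ multiplier estimates for the corresponding boundary-layer operators on $\bS^{d-1}$, indexed by the outward normal. Since $\Omega$ is smooth and strictly convex, the Gauss map $x \mapsto n(x)$ is a $C^\infty$-diffeomorphism from $\partial\Omega$ onto $\bS^{d-1}$, so $\overline{f}$ may equivalently be viewed as a function of $n$ with the slow variable $x$ entering only through $f(x,\cdot)$. The slow dependence is harmless because $f$ is $C^1$ in $x$: by the product rule, derivatives in $x$ produce two types of terms, those in which one differentiates $f(x,\cdot)$ (directly bounded by $\|f\|_{C^1}$) and those in which one differentiates $n(x)$ (the truly oscillatory contribution). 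After this reduction, the theorem follows from the estimate $\|\overline{f}(n)\|_{W^{1,p}(\bS^{d-1})} \leq C_p \|f\|_{L^2(\T^d)}$ for every $p < \infty$, applied with $f = f(y)$ independent of $x$.

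Expanding $f(y) = \sum_{\xi \in \Z^d} \hat{f}_\xi\, e^{2\pi i \xi \cdot y}$ and using linearity of the boundary-layer problem, I obtain $\overline{f}(n) = \sum_\xi \hat{f}_\xi\, M_\xi(n)$, where the multiplier $M_\xi(n)$ is the boundary-layer tail at $-\infty$ of the corrector in the half-space $\{z \cdot n < 0\}$ with boundary data $e^{2\pi i \xi \cdot z}|_{\{z \cdot n = 0\}}$. Parseval in $y$ identifies $\sum_\xi |\hat{f}_\xi|^2$ with $\|f\|_{L^2(\T^d)}^2$, so the theorem reduces to an $L^p(\bS^{d-1})$ bound for the derivative series $\nabla_n \overline{f}(n) = \sum_\xi \hat{f}_\xi\, \nabla_n M_\xi(n)$, and all the analytic difficulty is concentrated in controlling $\nabla_n M_\xi(n)$.

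For $n$ well-separated (in a Diophantine sense) from the frequency plane $\xi^\perp$, the boundary-layer corrector decays exponentially and $\nabla_n M_\xi(n)$ is well-behaved; the singular region is a small annular neighborhood of the tangential set $\{\xi \cdot n = 0\}$. I would proceed by a dyadic decomposition in $|\xi|$, a Diophantine decomposition of $\bS^{d-1}$ into good and bad pieces relative to each $\xi$, and improved half-space estimates on the boundary-layer corrector that refine the analysis of \cite{ShenZhuge16} in the bad region. The final assembly uses $\ell^2$-orthogonality over $\xi$ rather than a triangle inequality, converting the smallness of the bad set of directions into $L^p$ gains for every finite $p$. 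The principal obstacle is the quantitative control of $\nabla_n M_\xi$ near rational (or nearly-rational) normals, where the boundary-layer problem degenerates from almost-periodic to genuinely periodic; here the earlier analysis loses the powers that the jump from $p < d-1$ to $p < \infty$ must recover, and extracting cancellations across frequencies rather than estimating each $M_\xi$ in isolation is where I expect the bulk of the technical work to lie.
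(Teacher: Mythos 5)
Your proposal takes a genuinely different route from the paper. The paper does not Fourier-decompose $\phi$ at all; instead it works with the explicit formula for $\overline{f}$ in terms of the lifted Dirichlet problem $V_n$ on $\T^d\times\R_+$, proves a pointwise Lipschitz estimate $|\overline{f}(x)-\overline{f}(y)|\le C_\sigma\varkappa^{-\sigma}|x-y|$ for Diophantine normals (Theorem \ref{thm_MU}), and then integrates this estimate using the fact that $[\varkappa(n(\cdot))]^{-1}\in L^q(\partial\Omega)$ for all $q<d-1$. The engine is a weighted norm inequality for the difference $W=V_{n_1}-V_{n_2}$ (Lemma \ref{lem_A1est}), fed by new pointwise decay estimates for $V_n$ and $\nabla_\theta V_n$ obtained by interpolating between the universal $1/t$ decay of $\nabla V_n$ and the Diophantine-dependent rapid decay of Lemma \ref{lemma-DKV}. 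Your strategy --- Fourier series in the fast variable, multipliers $M_\xi(n)$, dyadic and Diophantine decompositions of $\bS^{d-1}$, and $\ell^2$ cancellation across frequencies --- is not present in the paper in any form.

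However, as written the proposal has a genuine gap: the entire analytic content is deferred. You yourself say the ``improved half-space estimates on the boundary-layer corrector'' and the ``cancellations across frequencies rather than estimating each $M_\xi$ in isolation'' are ``where I expect the bulk of the technical work to lie.'' These are precisely the steps that would prove the theorem, and they are not supplied, even in outline: there is no quantitative bound on $\nabla_n M_\xi(n)$ in the bad region, no statement of what ``$\ell^2$-orthogonality over $\xi$'' means as an estimate (the functions $n\mapsto M_\xi(n)$ are not orthogonal in $L^2(\bS^{d-1})$, and Parseval in $y$ gives nothing in the $n$-variable), and no mechanism by which the small measure of the bad set for each $\xi$ survives the sum over $\xi$ in $L^p$ with $p\ne 2$. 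Without these, the argument does not close.

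There is also a concrete error in the geometry. The Diophantine condition (\ref{cdn_Dioph}) degenerates when $|(I-n\otimes n)\xi|$ is small, i.e.\ when $n$ is nearly parallel to $\xi$; correspondingly, the boundary-layer corrector with boundary data $e^{2\pi i\xi\cdot\theta}$ decays in $t$ at a rate governed by $|(I-n\otimes n)\xi|$, and this is slow precisely when $n\approx\pm\xi/|\xi|$. The singular set for $M_\xi$ is therefore a pair of small caps around $\pm\xi/|\xi|$, not (as you write) ``a small annular neighborhood of the tangential set $\{\xi\cdot n=0\}$,'' which is the opposite region. This would need to be corrected before any decomposition of $\bS^{d-1}$ could be set up.
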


\begin{theorem}[Neumann Data]\label{main-thm-02}
Assume that $A$ satisfies (\ref{ellipticity}), (\ref{periodicity}) and (\ref{smoothness}).
Let $\Omega$ be a smooth and strictly convex domain in $\R^d$.
Let $\overline{g}=( \overline{g}_{ij}) $ denote the homogenized data in (\ref{NP-h}).
Then
\begin{equation}\label{main-estimate-02}
\| \overline{g}\|_{W^{1, p}(\partial\Omega)}
\le C_p \left(\int_{\T^d} \| g(\cdot, y)\|^2_{C^1(\partial\Omega)}\, dy\right)^{1/2}
\quad \text{ for any } 1<p<\infty,
\end{equation}
where $C_p$ depends only on $d$, $m$, $\lambda$, $p$, and
$\| A\|_{C^k(\T^d)}$ for some $k=k(d, p)>1$.
\end{theorem}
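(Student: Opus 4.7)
I would attack Theorem \ref{main-thm-02} by refining the boundary-layer analysis developed in \cite{ShenZhuge16}. The homogenized datum $\overline{g}_{ij}(x_0)$ admits an intrinsic description via a first-order Neumann boundary-layer problem posed in the half-space $\{y\in\R^d:y\cdot n(x_0)>c\}$ with periodic-type data determined by $g(x_0,\cdot)$ and the outward normal $n(x_0)$: $\overline{g}_{ij}(x_0)$ is the tail of this boundary-layer solution at infinity. The dependence on $x_0$ therefore decouples into (a) a smooth tangential dependence through $x_0\mapsto g(x_0,\cdot)$, which contributes terms controlled directly by $\|g(\cdot,y)\|_{C^1(\partial\Omega)}$ in $L^2_y$, and (b) a dependence through the Gauss map $x_0\mapsto n(x_0)$, which is the source of the difficulty. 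Since $\Omega$ is smooth and strictly convex, the Gauss map is a diffeomorphism $\partial\Omega\to\bS^{d-1}$, so it suffices to establish $W^{1,p}$ control on $\bS^{d-1}$ of the function $n\mapsto G(n)$ obtained by freezing the $x$-variable in the data and varying only the direction.

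I would parametrize by $n\in\bS^{d-1}$ and separate two regimes. In the Diophantine regime $|n\cdot k|\ge c|k|^{-\tau}$ for all $k\in\Z^d\setminus\{0\}$, Fourier expansion of the data in $y$ reduces the half-space boundary-layer problem to a countable family of ODE/Poisson-type problems, one per frequency $k$, whose decay rate in $y\cdot n$ is controlled by the tangential component of $k$. Each $n$-derivative of the tail $G(n)$ costs at most one factor of $|k|$; exploiting $C^k$-smoothness of $A$ (hence rapid decay of the Fourier symbols of the associated Poisson kernel, and correspondingly of the data if one absorbs the required derivatives into the norm appearing on the right of (\ref{main-estimate-02})), one obtains a pointwise bound of the form $|\nabla_n G(n)|\le C\,d(n)^{-1}$, where $d(n)$ measures the nearest resonance.

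The main obstacle is to upgrade this pointwise estimate, which blows up at the resonant (rational) directions, into an $L^p(\bS^{d-1})$ bound for arbitrarily large $p$. I would handle this by a dyadic decomposition of $\bS^{d-1}$ according to the scale at which the Diophantine condition fails: the set of directions at which $|n\cdot k|<\delta$ for some $k$ with $|k|\le N$ has surface measure $O(\delta\, N^{d-1})$, and summing the contributions of dyadic shells against the pointwise blow-up yields $\nabla_n G\in L^p(\bS^{d-1})$ for all $p<\infty$, at the cost of requiring progressively more derivatives of $A$ (hence the dependence $k=k(d,p)$). A delicate point is that in a neighborhood of each rational direction $n_0$ of bounded denominator one cannot work directly with the half-space problem at $n$, but must compare it to a genuine $(d-1)$-periodic auxiliary problem on a slab associated with $n_0$ and control the error from the perturbation $n-n_0$; this is the most technical part and I would handle it by quantitative perturbation estimates together with the Neumann-corrector bounds of \cite{ShenZhuge16}.

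Finally, pulling back from $\bS^{d-1}$ to $\partial\Omega$ via the Gauss-map diffeomorphism, and adding in the trivial tangential contribution from the $x_0$-dependence of the data, produces (\ref{main-estimate-02}). The main quantitative issue is balancing the loss $|k|^N$ from taking many $n$-derivatives against the measure of the resonant set, which is exactly where the hypothesis of smooth $A$ of sufficiently high order enters.
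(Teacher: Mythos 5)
Your high-level architecture — express $\overline{g}_{ij}(x_0)$ as the tail of a half-space Neumann boundary-layer problem, separate the dependence on $g(x_0,\cdot)$ (trivial, absorbed into the right-hand side of (\ref{main-estimate-02})) from the dependence on the direction $n(x_0)$, and pull back from $\bS^{d-1}$ to $\partial\Omega$ via the Gauss map — matches the paper. But there are two concrete gaps in how you propose to control the $n$-dependence, and the second one is fatal to the target exponent range.

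First, the claim that ``Fourier expansion of the data in $y$ reduces the half-space boundary-layer problem to a countable family of ODE/Poisson-type problems, one per frequency $k$'' is only correct for constant-coefficient operators. Here the coefficients are $A^*(y)$ (equivalently, in the lifted picture, $B(\theta,t)=M^TA^*(\theta-tn)M$), so different Fourier modes in $\theta$ couple through multiplication by the oscillating matrix, and the problem does not decouple frequency by frequency. The paper instead works directly with the degenerate lifted system (\ref{eq_U}) in $\T^d\times\R_+$, using a weighted energy inequality (Lemma~\ref{lem_A1est_N}) and the fact that the corresponding $u^s(x)=U(x-(x\cdot n+s)n,-(x\cdot n+s))$ solves a genuine half-space Neumann problem, which brings the large-scale uniform boundary Lipschitz estimates of \cite{KLS13, AS} into play.

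Second, and more importantly, the pointwise estimate you aim for, $|\nabla_n G(n)|\lesssim d(n)^{-1}$ with $d(n)$ measuring the nearest resonance, is essentially $\varkappa(n)^{-1}$, and $\varkappa^{-1}\in L^q(\bS^{d-1})$ only for $q<d-1$. That recovers the earlier result $\overline{g}\in W^{1,p}$ for $p<d-1$ of \cite{ShenZhuge16}, not the full range $1<p<\infty$ claimed in the theorem. No amount of taking more $A$-derivatives, nor any dyadic decomposition of $\bS^{d-1}$ into shells by resonance scale, changes this: the singularity of $d(n)^{-1}$ along each great subsphere $k^\perp\cap\bS^{d-1}$ is codimension one, so $d(n)^{-p}$ is not locally integrable for $p\ge 1$, and the known global integrability of $\varkappa^{-1}$ caps out at $q<d-1$. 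What the paper actually does is obtain, from the half-space Lipschitz estimate, a bound on $\nabla_\theta U$ that is \emph{uniform in $\varkappa$} (see (\ref{est_DrU}) and, for the Dirichlet analogue, (\ref{est_Vbd})), and then \emph{interpolate} this $\varkappa$-independent bound against the $\varkappa$-dependent Diophantine decay (\ref{est_DUptwise}) to arrive at a pointwise bound of order $\varkappa^{-\sigma}$ for any $\sigma\in(0,1)$. This interpolation is precisely what lets one take $\sigma p<d-1$ with $p$ arbitrarily large, and it is absent from your proposal. Without it, your final mollification/pullback step would only close for $p<d-1$.

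Your remarks about the comparison with slab problems near rational $n_0$ are plausible in spirit but are exactly the hard technical route the paper sidesteps: Theorem~\ref{thm_tauC} needs only the two Diophantine endpoints $n(x),n(y)\in\bS^{d-1}_D$, and the passage to $W^{1,p}$ goes through a Lipschitz-type comparison of $\overline{g}(x)-\overline{g}(y)$ plus an elementary mollification argument, never through the rational directions themselves.
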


It follows from regularity estimates (\ref{main-estimate-01}) and (\ref{main-estimate-02}) that
the homogenized data $\overline{f}$ and $\overline{g}=(\overline{g}_{ij})$ are H\"older continuous
of order $\alpha$ for any $\alpha\in (0,1)$.
We should point out that the assumption that $\Omega$ is strictly convex is
not essential for Theorems \ref{main-thm-01} and \ref{main-thm-02}.
In fact, the proof goes through as long as one has
$[\varkappa (n(x))]^{-1} \in L^q (\partial\Omega)$ for some $q>0$ (see (\ref{cdn_Dioph})
for the definition of $\varkappa$).
Consequently, the conclusions of Theorems \ref{main-thm-01}
and \ref{main-thm-02} continue to hold for the domains of finite type considered in \cite{Z17}.

We mention several related work regarding the continuity of homogenized data.
In \cite{Alek-16}, under the additional assumption that $A$ is independent of some
rational direction $\nu_0$, it was proved that the homogenized Dirichlet data has a 
unique continuous extension to the set $\{ x\in \partial\Omega: n(x)\cdot \nu_0
\neq 0\}$.
The problem of H\"older continuity
was also studied in \cite{Choi-Kim-2014, FK17} for second-order nonlinear elliptic equations of form
$F(D^2 u_\varepsilon, x/\varepsilon)=0$.
In particular, it was shown in \cite{FK17} that if the homogenized operator
$\overline{F}$ is either rotational invariant or linear,
then the homogenized Dirichlet data is H\"older continuous, and that
 the homogenized data may be discontinuous in general.
 Note  that the linear elliptic equations in non-divergence form may be written in a divergence form
with $\text{div}(A)=0$.
In this case, the first-order correctors are trivial and as a result, it is easy to see that
the homogenized data is smooth if $\Omega$ is smooth and 
satisfies some geometric conditions.
As far as we know,  the continuity of the homogenized data
in the general case of elliptic equations in divergence form is not known previously.

We now describe our general approach to Theorems \ref{main-thm-01}
and \ref{main-thm-02} as well as some of the key estimates in the proof.
Our starting point for the proof of Theorem \ref{main-thm-01}
is a formula for the homogenized data $\overline{f}$ discovered  in \cite{AKMP16}.
See Theorem \ref{thm_mu}.
This formula reduces the problem to the study of the dependance on $n\in \mathbb{S}^{d-1}$ of
solutions $V_n=V_n(\theta, t)$ to the Dirichlet problem,
\begin{equation}\label{DP-01}
	\left\{
	\begin{aligned}
		-\Dr{N_n^T \nabla_\theta}{\partial_t}
		\cdot B_n \Dr{N_n^T \nabla_\theta}{\partial_t} V_n
		&= 0 &\quad & \txt{ in } \T^d\times \R_+, \\
		V_n (\theta, 0)&= \phi (\theta) &\quad & \txt{ on } \T^d\times \{0\},
	\end{aligned}
	\right.
\end{equation}
where $\phi\in C^\infty(\T^d; \R^m)$,
$B_n=B_n (\theta, t) =M_n^T A^* (\theta -tn) M_n$, and
$M_n$ is a $d\times d$ orthogonal matrix whose first $d-1$ columns
are given by $N_n$ and whose last column is $-n$.

To describe our key estimates, we need to introduce some notations.
A unit vector $n = (n_1, n_2, \cdots, n_d)$ 
$\in \bS^{d-1}$ is called rational if $n\in \R\Z^d$ and called irrational otherwise. 
Moreover, a unit vector $n$ is called Diophantine if there exists some constant $C > 0$ such that
\begin{equation}\label{cdn_Dioph}
	|(I - n\otimes n)\xi| \ge C |\xi|^{-2} \qquad \txt{for all } \xi\in \Z^d\setminus \{0\}.
\end{equation}
Denote by $\varkappa = \varkappa(n)$ the Diophantine constant, 
which is defined as the largest constant validating (\ref{cdn_Dioph}). 
We use $\bS^{d-1}_R$, $\bS^{d-1}_I $ and $ \bS^{d-1}_D$ 
to represent the sets of rational, irrational and Diophantine unit vectors, respectively. 
Note  that $\bS^{d-1}_D$ is a subset of $\bS^{d-1}_I$ and has full surface measure of $\bS^{d-1}$.

Let $n, \widetilde{n}\in \bS^{d-1}_D$. 
We will show in Section 2 that for any $\sigma\in (0,1)$,
\begin{equation}\label{main-001}
\left(\int_{\T^d} \big|\partial_t V_{n} (\theta, 0)
-\partial_t V_{\widetilde{n}} (\theta, 0)\big|^2\, d\theta\right)^{1/2}
\le C_\sigma \varkappa^{-\sigma} |n -\widetilde{n}|,
\end{equation}
where $\varkappa =\max\big\{ \varkappa (n), \varkappa(\widetilde{n})\big\}$
and $C_\sigma$ depends only on 
$d$, $m$, $\sigma$, $\lambda$, $\|A\|_{C^k(\T^d)}$ and $\|\phi\|_{C^k(\T^d)}$
for some $k=k(d, \sigma)>1$.
Theorem \ref{main-thm-01} follows from (\ref{main-001}) 
by using the representation formula mentioned above
and an approximation argument.

To prove (\ref{main-001}),
besides the energy estimates established in \cite{GerMas11, GerMas12, AKMP16},
one needs to fully take advantage of the fact that if
\begin{equation}\label{relation-u-V}
u^s (x)=V_n(x-(x\cdot n) n -sn, -x\cdot n -s),
\end{equation}
then $u^s$ is a solution of the Dirichlet problem in a half-space,
\begin{equation}\label{eq_Hu}
	\left\{
	\begin{aligned}
		\cL^*_1 (u^s) &= 0 &\quad & \txt{in } \bH^d_n (s), \\
		u^s &= \phi &\quad & \txt{on } \partial\bH^d_n(s),
	\end{aligned}
	\right.
\end{equation}
where $\bH^d_n(s)=\bH^d_n -sn$ and
$
\bH^d_n = \{x \in \R^d: x\cdot n < 0\} 
$
is the half-space whose boundary contains the origin and with outward normal $n$.
This allows us to apply the large-scale boundary regularity estimates for the
operator $\mathcal{L}^*_1$.
The technique was already used in \cite{GerMas12, AKMP16} to establish the boundedness of
$V_n$ and in \cite{ShenZhuge16} for a crucial weighted norm inequality.
Here, among other things, we apply the technique to establish the boundedness of
$\nabla_\theta V_n$ as well as some pointwise decay estimates for
$\partial_t V_n$ and $N_n^T\nabla_\theta V_n$.

We remark that the asymptotic behavior of the solution 
$u^s$ of (\ref{eq_Hu}) as $x\cdot n\to -\infty$ is well understood thanks to 
\cite{MV97, AA99, GerMas11, GerMas12, Pra13, Alek16}. 
In particular, 
if $n$ is irrational, it was shown in \cite{Pra13} that
 there exists a constant vector $\mu^*(n,\phi)\in \R^m$ independent of $s$ such that
\begin{equation}\label{eq_muus}
	\mu^*(n,\phi) = \lim_{x\cdot n \to -\infty} u^s(x),
\end{equation}
though the rate of convergence could be arbitrarily slow in general. 
On the other hand, if $n$ is rational \cite{MV97,AA99},
 the above limit depends  on $s$ and possesses  an exponential rate of convergence. 
The mapping $\mu: \bS^{d-1}_I\times C^\infty(\T^d;\R^m) \mapsto \R^m$ defined via (\ref{eq_muus}),
but with $\mathcal{L}_1^*$ replaced by $\mathcal{L}_1$,
is called the boundary layer tail (BLT) for Dirichlet problems associated with $\cL_1$.
It follows from  \cite{GerMas12} that
\begin{equation}\label{BLT}
\overline{f} (x) =\mu (n(x), f(x, \cdot)), \qquad \text{ if } n(x)\in \bS_D^{d-1}.
\end{equation}
Thus, by Theorem \ref{main-thm-01},
  $\norm{\mu (\cdot, \phi)}_{W^{1,p}(\bS^{d-1})} \le
 C \norm{\phi}_{L^2(\T^d)}$ for any $1<p<\infty$.
  Consequently, for any $0<\alpha<1$, $\mu(\cdot, \phi)$ extends to a H\"older
 continuous function of order $\alpha$ on $\bS^{d-1}$ and 
	\begin{equation}\label{est_muCalpha}
		|\mu(n, \phi) - \mu(\widetilde{n}, \phi)| \le  
		C_\alpha |n-\widetilde{n}|^\alpha \norm{\phi}_{L^2(\T^d)} 
		\quad \txt{for any } n,\widetilde{n}\in \bS^{d-1},
	\end{equation}
	where $C_\alpha $ depends only on $d$, $m$, $\alpha$ and $A$.
	
	Our approach to Theorem \ref{main-thm-02} for Neumann problems
	is similar to that used for Theorem \ref{main-thm-01}.
	The starting point is a formula for the homogenized data $\{ \overline{g}_{ij}\}$ obtained in \cite{ShenZhuge16}.
	See Theorem \ref{thm-N-formula}.
	As in the case of Dirichlet problems,
	this formula reduces the problem to the study of the dependence in $n\in \bS^{d-1}$ of solutions 
	$U_n=U_n (\theta, t)$ to the 
	Neumann problem,
	\begin{equation}\label{NP-01}
	\left\{
	\begin{aligned}
		-\Dr{N_n^T \nabla_\theta}{\partial_t}
		\cdot B_n \Dr{N_n^T \nabla_\theta}{\partial_t} U_n
		&= 0 &\quad & \txt{ in } \T^d\times \R_+, \\
		-e_{d+1}\cdot B_n \Dr{N_n^T \nabla_\theta}{\partial_t} U_n &= T_n \cdot \nabla_\theta \phi
		&\quad & \txt{ on }  \T^d\times \{0\},
	\end{aligned}
	\right.
\end{equation}
where $T_n\in \R^d$, $|T_n|\le 1$ and $T_n\cdot n=0$.
Let $n, \widetilde{n}\in \bS^{d-1}_D$. 
We will show in Section 3 that for any $\sigma\in (0,1)$,
\begin{equation}\label{main-002}
\left(\int_{\T^d} \big|\nabla_\theta U_{n} (\theta, 0)
-\nabla_\theta U_{\widetilde{n}} (\theta, 0)\big|^2\, d\theta\right)^{1/2}
\le C_\sigma \varkappa^{-\sigma} |n -\widetilde{n}|,
\end{equation}
where $\varkappa =\max\big\{ \varkappa (n), \varkappa(\widetilde{n})\big\}$
and $C_\sigma$ depends only on 
$d$, $m$, $\sigma$, $\lambda$, $\|A\|_{C^k(\T^d)}$ and $\|\phi\|_{C^k(\T^d)}$
for some $k=k(d, \sigma)>1$.
Theorem \ref{main-thm-02} follows from (\ref{main-002}) 
and the representation formula mentioned above.
Finally, we point out that the key estimates in the proof of (\ref{main-002})
rely on the observation that if
$u^s (x)=U_n (x-(x\cdot n) n -sn, -x\cdot n -s)$, 
then $u^s$ is a solution to the Neumann problem,
\begin{equation}
\mathcal{L}^*_1 (u^s)=0 \quad \text{ in } \mathbb{H}^d_n (s)
\quad \text{ and } \quad 
\frac{\partial u^s}{\partial \nu^*_1} = T_n\cdot \nabla_x \phi \quad \text{ on } \partial\mathbb{H}^d_n (s).
\end{equation}
We refer the reader to Section 3 for details.


\section{Regularity for Dirichlet problems}

Assume that $A$ satisfies conditions (\ref{ellipticity})-(\ref{smoothness}). 
For $1\le j\le d$ and $1\le \beta\le m$, 
let $\chi = (\chi_j^\beta) = (\chi_j^{1\beta},\chi_j^{2\beta},\cdots,\chi_j^{m\beta})$ 
denote the correctors for $\cL_\varepsilon$.
By definition they are 1-periodic functions satisfying the equation
 $\cL_1 (\chi^{\beta}_{j} + P_j^\beta) = 0$ in $\R^d$ with 
 $\int_{\T^d} \chi_j^\beta = 0$,
 where $P_j^\beta(x) = x_je^\beta $. 
 The homogenized operator is given by $\cL_0 = -\txt{div}(\widehat{A}\nabla)$,
  where the homogenized matrix $\widehat{A} = (\widehat{a}_{ij}^{\alpha\beta})$ is defined by
  
\begin{equation*}
	\widehat{a}_{ij}^{\alpha\beta} =  \int_{\T^d} \Big\{ a_{ij}^{\alpha\beta} + a_{ik}^{\alpha\gamma} \frac{\partial}{\partial y_k} (\chi_j^{\gamma\beta}) \Big\}.
\end{equation*}
We also introduce the adjoint operator $\cL^*_\varepsilon = - \txt{div}(A^*(x/\varepsilon)\nabla )$, 
where $A^* = (a_{ij}^{*\alpha\beta})$ with
 $a_{ij}^{*\alpha\beta} = a_{ji}^{\beta\alpha}$. 
 Note that $A^*$ also satisfies (\ref{ellipticity})-(\ref{smoothness}). 
 Let $\chi^*=(\chi^{*\beta}_j)$ denote the correctors for $\cL^*_\varepsilon$.

The solvability of the Dirichlet problem (\ref{eq_Hu}) is not obvious,
 since $\bH^d_n(s)$ is unbounded. 
Nevertheless, by using Lipschitz estimates in   \cite{AveLin87}
and an approximation argument,
one may establish the existence of the Poisson kernel 
in a half-space and hence the solvability of (\ref{eq_Hu}) via the Poisson integral formula. 

\begin{theorem}\label{thm_Poi}
	Let $\Omega=\bH^d_n(s)$ for some $n\in \bS^{d-1}$ and $s\in \R$.
	Then, for any bounded continuous function $\phi $ in $\R^d$, there exists a unique 
	bounded function $u$ in
	$C^\infty(\Omega; \R^m)\cap C(\overline{\Omega}; \R^m)$ such that
	\begin{equation}\label{DP}
	\mathcal{L}^*_1 (u)=0 \quad \text{ in } \Omega \quad \text{ and } \quad u=\phi \quad
	\text{ on } \partial\Omega.
	\end{equation}
	Moreover, the solution may be represented by
	\begin{equation}\label{eq_Pint}
		u(x) = \int_{\partial\Omega} P^*(x,y) \phi (y)\, d\sigma(y),
	\end{equation}
	where the Poisson kernel $P^* = P^*(x,y)$ satisfies
	\begin{equation}\label{est_Poison}
		|P^*(x,y)| \le \frac{C\, \delta(x)}{|x-y|^d},
			\end{equation}
	\begin{equation}\label{est_DPoison}
		|\nabla_x P^*(x,y)| \le \frac{C \min \big\{|x-y|, \delta(x) \big\}}{|x-y|^{d+1}}
	\end{equation}
	for any $x\in \Omega$ and $y\in \partial\Omega$,
 $\delta(x) = \emph{dist}(x, \partial\bH^d_n(s)) = |s+x\cdot n|$,
 and $C$ depends only on $d$, $m$, $\lambda$,  and
 some H\"older norm of $A$ on $\T^d$.	
\end{theorem}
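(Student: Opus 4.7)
The plan is to construct the Poisson kernel on $\Omega=\bH^d_n(s)$ by exhausting $\Omega$ with bounded smooth subdomains, applying the Avellaneda--Lin boundary Lipschitz estimates on each piece, and passing to the limit. Fix a point $x_0\in\partial\bH^d_n(s)$. For $R>0$ let $\Omega_R$ be a bounded smooth domain that coincides with $\bH^d_n(s)\cap B_R(x_0)$ outside a thin neighborhood of the corner $\partial B_R\cap\partial\bH^d_n(s)$, where the boundary is rounded off. On each $\Omega_R$ the classical theory yields a Green's matrix $G_R(x,y)$ with the standard interior size bound (power type for $d\ge 3$, logarithmic for $d=2$). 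Since $A^*$ is smooth and $1$-periodic, the boundary Lipschitz estimates of \cite{AveLin87} apply to $\cL_1^*$ on the flat portion of $\partial\Omega_R$ with constants depending only on $d,m,\lambda$, and a H\"older norm of $A^*$; combined with the vanishing of $G_R$ on the boundary they yield
\[
|G_R(x,y)| \le \frac{C\,\delta(x)\,\delta(y)}{|x-y|^d}
\]
for $x,y$ at unit distance from the curved cap, with $C$ independent of $R$.

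The Poisson kernel $P_R$ is obtained as the conormal derivative of $G_R$ along the flat boundary, and differentiating the Green's function bound in $y$ via another application of boundary Lipschitz regularity yields $|P_R(x,y)|\le C\delta(x)/|x-y|^d$ uniformly in $R$. Local H\"older estimates on compact subsets of $\Omega\times\partial\bH^d_n(s)$ away from the diagonal $\{x=y\}$ allow a diagonal subsequence of $\{P_R\}$ to converge locally uniformly to a kernel $P^*$ that inherits (\ref{est_Poison}). The gradient bound (\ref{est_DPoison}) comes from viewing $x\mapsto P^*(x,y)$ as a solution of $\cL_1^*u=0$ in $\Omega\setminus\{y\}$ vanishing on $\partial\Omega\setminus\{y\}$: interior Lipschitz estimates on $B_{|x-y|/4}(x)$ combined with (\ref{est_Poison}) give the desired decay when $x$ is far from the boundary, while boundary Lipschitz estimates on a ball of radius $\sim|x-y|$ centered at the nearest boundary point to $x$ exploit the vanishing of $P^*$ on $\partial\Omega\setminus\{y\}$ to give the sharper control when $\delta(x)\ll|x-y|$.

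Given $P^*$ with the decay (\ref{est_Poison}), formula (\ref{eq_Pint}) defines a bounded $C^\infty$ solution of $\cL_1^*u=0$ in $\Omega$ for every bounded continuous $\phi$; continuity up to $\partial\Omega$ follows from a standard approximation-of-identity argument using the normalization $\int_{\partial\Omega}P^*(x,y)\,d\sigma(y)=I$, obtained by applying the construction to constant boundary data. Uniqueness of bounded solutions reduces to the Liouville statement that any bounded $v$ satisfying $\cL_1^*v=0$ in $\Omega$ and $v\equiv 0$ on $\partial\Omega$ must vanish; this follows from the Avellaneda--Lin boundary H\"older estimate, which yields $|v(x)|\le C(|x-x_0|/R)^\alpha\sup_{\Omega\cap B_R(x_0)}|v|$ for $x_0\in\partial\Omega$ nearest to $x$ and forces $v\equiv 0$ upon letting $R\to\infty$.

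The main obstacle is ensuring that the Green's function and Poisson kernel estimates on $\Omega_R$ hold with constants genuinely independent of $R$. While interior and flat-boundary Lipschitz estimates depend only on local data, one must verify that the curved cap near $\partial B_R$ does not contaminate estimates at points of bounded distance from $x_0$. This is handled by a standard energy/cutoff argument showing that the cap's contribution to $G_R$ decays as $R\to\infty$, after which the local uniform convergence $P_R\to P^*$ used to pass to the limit follows easily.
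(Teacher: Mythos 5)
The paper does not give its own proof of this theorem; it simply cites Gérard-Varet and Masmoudi (Proposition 2.5 of their 2012 paper), whose argument is precisely the exhaustion-by-bounded-domains construction via Avellaneda--Lin boundary Lipschitz and Hölder estimates that you sketch. Your proposal is a faithful and essentially correct reconstruction of the cited reference's proof.
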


\begin{proof}
	The theorem was proved in \cite[Proposition 2.5]{GerMas12}. 
	\end{proof}
	
\begin{remark}\label{remark-20}
	By the boundary Lipschitz estimates \cite{AveLin87} and the Cacciopoli inequality,
	the uniqueness holds under the sublinear growth condition:
	$|u(x)|\le C_0 (1+ \delta(x))^\alpha$ for some $C_0>0$ and $\alpha \in (0,1)$.
	Also, it follows readily from (\ref{est_Poison}) that the Miranda-Agmon maximum principle, 
	\begin{equation}\label{max-p}
	\| u\|_{L^\infty(\Omega)} \le C \| \phi\|_{L^\infty(\partial\Omega)}
	\end{equation}
	holds, where $C$ depends only on $d$, $m$, $\lambda$,  and
 some H\"older norm of $A$ on $\T^d$.	
\end{remark}

An alternative way to establish  the solvability of (\ref{eq_Hu})
for periodic data $\phi$ is to lift the problem to a ($d+1$)-dimensional problem in the upper half-space. 
Fix $n\in \bS^{d-1}$. Let $M = (N,-n)$ be a $d\times d$ orthogonal matrix such that the last column is $-n$ 
and the first $d-1$ column  is a $d\times (d-1)$ matrix $N$. 
Now we seek a solution $u$ of (\ref{eq_Hu}) in a particular form
\begin{equation}\label{eq_uV0}
	u^s(x) = V(x-(x\cdot n)n-sn , -x\cdot n-s).
\end{equation}
It is not hard to see that $V=V(\theta, t)$ has to satisfy the following lifted degenerate system,

\begin{equation}\label{eq_Vs}
	\left\{
	\begin{aligned}
		-\Dr{N^T \nabla_\theta}{\partial_t}
		\cdot B \Dr{N^T \nabla_\theta}{\partial_t} V
		&= 0 &\quad & \txt{ in } \T^d\times (0,\infty), \\
		V(\theta,0) &= \phi (\theta) &\quad & \txt{ on }  \T^d\times \{0\},
	\end{aligned}
	\right.
\end{equation}
where $B(\theta,t) = M^T A^*(\theta-tn) M$. Note that $M M^T = I$ implies $I = NN^T + n\otimes n$. 
It follows that
\begin{equation}
M \Dr{N^T \nabla_\theta}{\partial_t} = (I - n\otimes n) \nabla_\theta - n \partial_t.
\end{equation}
Thus, the solution $V$ is independent of the choice of $N$.

The well-posedness of (\ref{eq_Vs}) was given by \cite[Propositions 2.1 and 2.6]{GerMas12}.

\begin{lemma}\label{lemma-DKV}
 Let $n\in \bS^{d-1}$.
 Then,  for any $\phi\in C^\infty(\T^d;\R^m)$, 
 the system (\ref{eq_Vs}) has a  smooth solution $V =V(\theta, t)$ satisfying
	\begin{equation}\label{est_DDrV}
		\left(\int_0^\infty \int_{\T^d}\left( |N^T\nabla_\theta \partial_\theta^\alpha \partial_t^j V|^2 + |\partial_\theta^\alpha \partial_t^{1+j} V|^2 d\theta\right)\,  dt\right)^{1/2}
		 \le C \norm{\phi}_{C^{|\alpha|+j+1}(\T^d)},
	\end{equation}
	where $|\alpha|$, $j \ge 0$, and  $C$ 
	depends only on $d$, $m$, $|\alpha|$, $j$ and $A$. 
	Moreover, if $n\in \bS^{d-1}_D$ with Diophantine constant $\varkappa>0$,
	then there exists a constant $V_\infty$ such that for any $|\alpha|$, $j$, $\ell \ge 0$,
	\begin{equation}\label{est_DVDio}
		|N^T\nabla_\theta \partial_\theta^\alpha \partial_t^j V|+ |\partial_\theta^\alpha \partial_t^{1+j} V| + \varkappa|\partial_\theta^\alpha (V - V_\infty)| \le \frac{ C_\ell \norm{\phi}_{C^k(\T^d)}}{(1+\varkappa t)^\ell},
	\end{equation}
	where $k = k(|\alpha|,j,\ell,d)$ and  $C_\ell$ depends only on $d$, $m$, $|\alpha|$, $j$, $\ell$ and $A$.
\end{lemma}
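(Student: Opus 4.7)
The plan is to lift the half-space solution furnished by Theorem \ref{thm_Poi}. Extend $\phi$ to a $\Z^d$-periodic $C^\infty$ function on $\R^d$; for each $s\in\R$, Theorem \ref{thm_Poi} supplies a unique bounded $u^{(s)}\in C^\infty(\bH^d_n(s);\R^m)\cap C(\overline{\bH^d_n(s)};\R^m)$ solving $\cL^*_1 u^{(s)}=0$ with boundary data $\phi$. The $\Z^d$-periodicity of the coefficients of $\cL^*_1$ and of $\phi$, together with the uniqueness in Theorem \ref{thm_Poi}, force the compatibility
\begin{equation*}
u^{(s-k\cdot n)}(x+k)=u^{(s)}(x)\qquad\text{for every } k\in\Z^d,
\end{equation*}
so that the formula $V(\theta,t):=u^{(-\theta\cdot n)}(\theta-tn)$ defines a $\Z^d$-periodic function on $\R^d\times\R_+$. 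A direct chain-rule calculation, using that $\nabla_x u^{(s)}= M\,(N^T\nabla_\theta V,\partial_t V)^T$, converts $\cL^*_1 u^{(s)}=0$ into the lifted system \eqref{eq_Vs}, while $V(\theta,0)=\phi(\theta)$ is immediate. Smoothness of $V$ follows from that of $u^{(s)}$ together with smooth $s$-dependence obtained by differentiating the Poisson representation \eqref{eq_Pint}.

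\textbf{Energy estimate.}
For the base case $|\alpha|=j=0$, I would test \eqref{eq_Vs} against $V-\Phi$, where $\Phi(\theta,t):=\phi(\theta)\chi(t)$ and $\chi$ is a smooth cutoff supported near $t=0$. Integration by parts and the coercivity of $B=M^T A^*(\theta-tn)M$ on the $d$-dimensional subspace spanned by the columns of $M$, inherited from \eqref{ellipticity}, yield via Cauchy--Schwarz
\begin{equation*}
\int_0^\infty\!\!\int_{\T^d}\!\left(|N^T\nabla_\theta V|^2+|\partial_t V|^2\right)d\theta\,dt\leq C\norm{\phi}_{C^1(\T^d)}^2.
\end{equation*}
For higher $\theta$-derivatives, commuting $\partial_\theta^\alpha$ through \eqref{eq_Vs} generates inhomogeneities involving $\partial_\theta^{\alpha'}B$ paired with lower-order derivatives of $V$, and these are absorbed by induction on $|\alpha|$. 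Higher $t$-derivatives are recovered algebraically: the $(d,d)$-entry of $B$ is uniformly bounded below, so $\partial_t^2 V$ is expressible from the PDE in terms of $(N^T\nabla_\theta,\partial_t)N^T\nabla_\theta V$, and iterating upwards closes the estimate with $\norm{\phi}_{C^{|\alpha|+j+1}}$ on the right.

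\textbf{Diophantine decay.}
This is the main obstacle. My plan is a Fourier decomposition $V(\theta,t)=\sum_{\xi\in\Z^d}\hat V(\xi,t)\,e^{2\pi i\xi\cdot\theta}$. Because $B(\theta,t)=M^T A^*(\theta-tn)M$ depends on $\theta-tn$, the equation couples mode $\xi$ to modes $\xi-\eta$ via factors $\hat A^*(\eta)\,e^{-2\pi i(n\cdot\eta)t}$. The effective tangential frequency at mode $\xi$ is $|(I-n\otimes n)\xi|$, which by the Diophantine condition \eqref{cdn_Dioph} is bounded below by $\varkappa|\xi|^{-2}$ for $\xi\neq 0$. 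A mode-wise energy estimate then produces exponential-type decay in $t$ at effective rate $\varkappa|\xi|^{-2}$ for each difference $\hat V(\xi,t)-\delta_{\xi,0}V_\infty$, where $V_\infty:=\lim_{t\to\infty}\hat V(0,t)$ is extracted from the $\xi=0$ equation. To pass to the pointwise bound $(1+\varkappa t)^{-\ell}$ I would split the series at $|\xi|\sim R(\ell)$: low modes are controlled by their individual decay rates, while high modes are absorbed via the rapid decay of $\hat\phi(\xi)$ inherited from $\phi\in C^k$; balancing $R$ yields the claimed polynomial rate. The delicate point is that the mode coupling introduced by $B$ is not perturbative, so a straightforward diagonalization fails; resolving it requires an iterative scheme that trades smoothness of $\phi$ (the index $k$) for temporal decay (the index $\ell$), which explains the dependence $k=k(|\alpha|,j,\ell,d)$ appearing in the statement.
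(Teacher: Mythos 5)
The paper's Lemma~\ref{lemma-DKV} is cited verbatim from \cite{GerMas12} (Propositions 2.1 and 2.6); your task was to reconstruct the argument, and the existence and $L^2$ parts of your sketch are broadly in line with the literature. However, the Diophantine decay step contains a genuine gap that is not resolvable along the lines you indicate.

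\textbf{Existence and energy estimate.} Lifting the half-space solution of Theorem~\ref{thm_Poi} via $V(\theta,t):=u^{(-\theta\cdot n)}(\theta-tn)$ is a legitimate construction and coincides with the paper's Remark~\ref{rmk_coinc}; the compatibility relation $u^{(s-k\cdot n)}(x+k)=u^{(s)}(x)$ is indeed forced by periodicity and uniqueness. One caveat on the energy estimate: testing \eqref{eq_Vs} against $V-\Phi$ with $\Phi=\phi\chi(t)$ is only an a priori computation, and since $V$ tends to the nonzero constant $V_\infty$ (not to $0$) as $t\to\infty$, the boundary contribution at $t=\infty$ does not obviously vanish. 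The standard way to justify this is to solve first on a truncated cylinder $\T^d\times(0,T)$ with an artificial condition at $t=T$, obtain $T$-uniform bounds, and pass to the limit; without some such regularization the integration by parts is not legitimate. This is fixable but should be acknowledged.

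\textbf{Diophantine decay.} This is where the proposal breaks down. Your Fourier ansatz $V=\sum_\xi \hat V(\xi,t)e^{2\pi i\xi\cdot\theta}$ leads, because $B(\theta,t)=M^TA^*(\theta-tn)M$ is not constant in $\theta$, to a \emph{fully coupled} infinite system
\begin{equation*}
\sum_{\eta\in\Z^d}\Dr{2\pi i N^T\xi}{\partial_t}\cdot \hat B(\eta)\,e^{-2\pi i(n\cdot\eta)t}\Dr{2\pi i N^T(\xi-\eta)}{\partial_t}\hat V(\xi-\eta,t)=0,
\end{equation*}
in which the off-diagonal coupling $\hat B(\eta)$, $\eta\neq0$, is $O(1)$ and not a small perturbation. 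Consequently your claim that ``a mode-wise energy estimate then produces exponential-type decay in $t$ at effective rate $\varkappa|\xi|^{-2}$'' does not hold: no single-mode energy identity closes, and the proposed ``iterative scheme'' is neither described nor obviously convergent (a Neumann series in the off-diagonal part diverges because the coupling is not small). Even the identification $V_\infty=\lim_{t\to\infty}\hat V(0,t)$ is not self-contained, since the zero-mode equation also involves all $\xi\neq0$ modes. The actual mechanism in \cite{GerMas12} is different and avoids diagonalization entirely: one works with \emph{summed} (not mode-wise) energies and uses the Diophantine condition through a Poincar\'e-type inequality of the form
\begin{equation*}
\Big\|\,w-\textstyle\int_{\T^d}w\,\Big\|_{L^2(\T^d)}\le C\,\varkappa^{-1}\,\|N^T\nabla_\theta w\|_{H^{2}(\T^d)},
\end{equation*}
which follows directly from $|N^T\xi|\ge\varkappa|\xi|^{-2}$ by Plancherel. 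Combined with a Caccioppoli/Gronwall-type estimate in the slab variable $t$ and the Sobolev bounds from \eqref{est_DDrV}, this converts each iteration of the Poincar\'e inequality (and its higher-derivative analogues) into one extra factor of $(1+\varkappa t)^{-1}$, at the cost of raising the index $k$. That ``trade smoothness for decay'' phenomenon is what you correctly anticipate, but the route you propose (mode-by-mode ODE analysis plus a frequency cutoff $R(\ell)$) does not implement it, because the modal equations never decouple. You would need to replace the Fourier decoupling argument by the summed-energy/Poincar\'e iteration to make the decay estimate \eqref{est_DVDio} go through.

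Finally, the constant $V_\infty$ in the statement is a genuine vector in $\R^m$, and its existence is part of what must be proved; in the weighted-energy approach it appears as the limit of $\fint_{\T^d}V(\cdot,t)\,d\theta$ as $t\to\infty$, whose existence follows from $\partial_t V\in L^1_t L^2_\theta$ once the first round of decay has been established.
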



\begin{remark}\label{rmk_coinc}
	The solution of (\ref{eq_Hu}) given by Theorem \ref{thm_Poi}  coincides 
	with the solution of (\ref{eq_Hu}) given by Lemma \ref{lemma-DKV} via (\ref{eq_uV0}) for any $n\in \bS^{d-1}$. 
	To see this,
	let  $w(x) = u^s(x) - V(x-(x\cdot n)n-sn, -x\cdot n -s)$. Clearly, $w$ satisfies
	\begin{equation}
		\left\{
		\begin{aligned}
			\cL^*_1 w &= 0 &\quad & \txt{in } \bH^d_n (s), \\
			w &= 0 &\quad & \txt{on } \partial\bH^d_n (s).
		\end{aligned}
		\right.
	\end{equation}
	Since $u^s$ is bounded and $V$ satisfies
	\begin{align*}
		|V(\theta, t)| & = \bigg| \int_0^t \partial_\rho V(\theta,\rho) d\rho + \phi(\theta) \bigg| \\
		& \le \norm{\phi}_\infty + t^{1/2} \bigg( \int_0^{\infty} |\partial_\rho
		 V(\theta,\rho)|^2 \,d\rho \bigg)^{1/2} \\
		& \le \norm{\phi}_\infty + C t^{1/2} \bigg( \int_0^\infty \norm{\partial_\rho 
		V(\cdot,\rho)}_{H^k(\T^d)}^2 \, d\rho \bigg)^{1/2} \\
		& \le \norm{\phi}_\infty + Ct^{1/2} \| f\|_{H^{k+2}(\T^d)},
	\end{align*}
for some $k\ge 1$,
	we conclude that $w$ is of sublinear growth as $|x\cdot n| \to \infty$. Thus, by Remark \ref{remark-20}, 
	we obtain $w \equiv 0$.
\end{remark}

Now we give an explicit expression for $\overline{f}(x) $ if $n(x)\in \bS^{d-1}_D$. 
For $1\le k\le d$ and $1\le \beta\le m$,
 let $V^{\beta}_{n,k} = V^{\beta}_{n,k}(\theta,t)$ denote the solution of
 the following Dirichlet problem,
 
\begin{equation}\label{eq_Vn}
	\left\{
	\begin{aligned}
		-\Dr{N^T \nabla_\theta}{\partial_t}
		\cdot B_n \Dr{N^T \nabla_\theta}{\partial_t} V^{\beta}_{n,k}
		&= 0 &\quad & \txt{ in } \T^d\times (0,\infty), \\
		V^{\beta}_{n,k} &= -\chi_k^{*\beta} &\quad & \txt{ on }  \T^d\times \{0\},
	\end{aligned}
	\right.
\end{equation}
where $\chi_k^{*\beta}$ are the correctors for $\cL_\e^*$,
 $B_n = M^T A^*(\theta-tn)M$,
  and $M = (N,-n)$ is an orthogonal matrix. 

\begin{theorem}\label{thm_mu}
	Let $x\in \partial\Omega$.
	Suppose that $n=n(x)\in \bS^{d-1}_D$.
	Let $V_n(\theta,t)$ be the solution of (\ref{eq_Vn}). Then
	\begin{equation}\label{eq_mu_barf}
	\overline{f}^\alpha (x)
	 = \int_{\T^d} h^{\alpha\beta} \bigg[\delta^{\gamma\beta} 
	 + \frac{\partial}{\partial \theta_\ell} \chi^{*\gamma\beta}_k(\theta) n_\ell n_k - {\partial_t} V^{\gamma\beta}_{n,k}(\theta,0) \cdot n_k\bigg] a_{ij}^{\gamma \nu}(\theta) n_i n_j f^\nu(x,\theta)\, d\theta
	\end{equation}
	for $1\le \alpha\le m$,
	where $h=(h^{\alpha\beta})$
	denotes the inverse matrix of the $m\times m$ matrix $ (\widehat{a}^{*\alpha\beta}_{ij} n_i n_j)$.
\end{theorem}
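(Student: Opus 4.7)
The plan is to derive formula (\ref{eq_mu_barf}) by reducing to a statement about the Dirichlet boundary-layer tail and then extracting the tail by testing against the adjoint boundary-layer correctors $V^\beta_{n,k}$. By the identification (\ref{BLT}) applied at $x \in \partial\Omega$ with $n = n(x) \in \bS^{d-1}_D$, it suffices to show that for any $\phi \in C^\infty(\T^d;\R^m)$, the BLT $\mu^\alpha(n,\phi)$ equals the right-hand side of (\ref{eq_mu_barf}) with $f^\nu(x,\theta)$ replaced by $\phi^\nu(\theta)$; applying this with $\phi = f(x,\cdot)$ then gives the statement. By Remark \ref{rmk_coinc} together with estimate (\ref{est_DVDio}) in Lemma \ref{lemma-DKV}, the solution $V_n(\theta,t)$ of (\ref{eq_Vs}) with data $\phi$ converges (uniformly in $\theta$, with quantitative rate controlled by $\varkappa$) to a constant $V_\infty \in \R^m$, and by construction $\mu^\alpha(n,\phi) = V_\infty^\alpha$.

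To pick out $V_\infty$, I would construct, for each $1\le\beta\le m$ and $1\le k\le d$, an adjoint test function on the cylinder $\T^d \times \R_+$. Using the cell-problem identity $\mathcal{L}_1^*(\chi^{*\beta}_k + P_k^\beta) = 0$ in $\R^d$ and the equation (\ref{eq_Vn}) satisfied by $V^\beta_{n,k}$, the combination
\begin{equation*}
  E^\beta(\theta,t) \;=\; e^\beta \;+\; n_k\, \chi^{*\beta}_k(\theta - tn) \;-\; n_k\, V^\beta_{n,k}(\theta,t)
\end{equation*}
solves the same lifted adjoint system as $V_n$ in (\ref{eq_Vs}) and satisfies $E^\beta(\theta,0) = e^\beta$. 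One then applies the weak formulation of (\ref{eq_Vs}) against the test function $E^\beta$ on the truncated cylinder $\T^d \times (0,T)$ and integrates by parts. The bulk terms cancel because both $V_n$ and $E^\beta$ are annihilated by the lifted adjoint operator, so the identity reduces to two boundary integrals on $\T^d \times \{0\}$ and $\T^d \times \{T\}$. A direct computation of the boundary flux at $t = 0$ using $B_n = M^T A^*(\theta - tn) M$, $MM^T = I$, and the boundary values $V_n(\theta,0) = \phi(\theta)$, $V^\beta_{n,k}(\theta,0) = -\chi^{*\beta}_k(\theta)$ produces precisely the bracketed factor $[\delta^{\gamma\beta} + \partial_{\theta_\ell}\chi^{*\gamma\beta}_k n_\ell n_k - \partial_t V^{\gamma\beta}_{n,k}(\theta,0) n_k]\, a^{\gamma\nu}_{ij} n_i n_j \phi^\nu(\theta)$ appearing in (\ref{eq_mu_barf}). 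On the other side, the flux at $t=T$ is computed using that $V_n \to V_\infty$ and $V^\beta_{n,k} \to V^\beta_{n,k,\infty}$, so that $E^\beta(\cdot,T)$ converges, and after averaging in $\theta$ its flux limits to the constant $\widehat{a}^{*\gamma\beta}_{ij} n_i n_j$ times $V^\gamma_\infty$ by the standard representation of the homogenized matrix. Inverting this $m\times m$ matrix with $h = (\widehat{a}^{*\alpha\beta}_{ij} n_i n_j)^{-1}$ yields (\ref{eq_mu_barf}).

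The main obstacle I expect is the rigorous justification of the passage $T \to \infty$ in the boundary flux at $t = T$: although $V_n \to V_\infty$ pointwise, the flux identity involves $N^T\nabla_\theta V_n$ and $\partial_t V_n$, and likewise derivatives of $V^\beta_{n,k}$, none of which decay under only the irrationality of $n$. This is where the Diophantine hypothesis enters critically, via the quantitative rate $(1+\varkappa t)^{-\ell}$ in (\ref{est_DVDio}), which ensures that the cross terms in the flux at height $T$ vanish while the principal term isolates exactly $\widehat{a}^{*\gamma\beta}_{ij} n_i n_j V^\gamma_\infty$. A secondary bookkeeping issue is the explicit verification, in the rotated $(N,-n)$ frame, that the $t=0$ flux of $E^\beta$ reassembles into the bracket stated; this amounts to expanding $e_{d+1}\cdot B_n \bigl(N^T\nabla_\theta, \partial_t\bigr)^T E^\beta$ at $t=0$ and recognizing the three summands.
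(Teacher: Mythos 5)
The paper's own ``proof'' of Theorem~\ref{thm_mu} is a one--line citation to \cite{AKMP16}, so you are proposing a genuinely independent derivation, and the overall strategy (lift to $\T^d\times\R_+$, pair $V_n$ with a dual test function via Green's identity, and extract $V_\infty$ from the $t=T$ flux using the Diophantine decay) is the right one. However, the specific test function $E^\beta$ is wrong on several counts. First, $E^\beta$ does not solve the lifted system: the cell identity is $\mathcal{L}_1^*(\chi^{*\beta}_k+P^\beta_k)=0$, not $\mathcal{L}_1^*(\chi^{*\beta}_k)=0$, so the term $e^\beta + n_k\chi^{*\beta}_k(\theta-tn)$ is not annihilated by $-\bigl(\begin{smallmatrix}N^T\nabla_\theta\\ \partial_t\end{smallmatrix}\bigr)\cdot B_n\bigl(\begin{smallmatrix}N^T\nabla_\theta\\ \partial_t\end{smallmatrix}\bigr)$ -- you are missing the linear-in-$t$ piece $-t e^\beta$, which corresponds to $n_kP_k^\beta=(x\cdot n+s)e^\beta$ after lifting. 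Second, the boundary value is miscomputed: with $V^\beta_{n,k}(\theta,0)=-\chi^{*\beta}_k(\theta)$, your $E^\beta(\theta,0)=e^\beta+2n_k\chi^{*\beta}_k(\theta)$, not $e^\beta$ (the two correction terms add rather than cancel), which would leave an uncontrolled flux term from $V_n$ at $t=0$. Third, ``solves the same lifted adjoint system as $V_n$ in (\ref{eq_Vs})'' confuses the roles of $A$ and $A^*$: since $\mu$ in (\ref{BLT}) is the boundary layer tail for $\mathcal{L}_1$, the auxiliary $V_n$ with data $\phi$ must solve the lifted $\mathcal{L}_1$-system with coefficient $M^TAM$, whereas (\ref{eq_Vs}) is written for $\mathcal{L}_1^*$ with $B_n=M^TA^*M$; for Green's identity the test function must solve the $L^2$-adjoint, i.e.\ the $M^TA^*M$ system, not ``the same'' one.

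The argument can be rescued by replacing $E^\beta$ with
\begin{equation*}
W^\beta(\theta,t) = -t\,e^\beta + n_k\,\chi^{*\beta}_k(\theta-tn) + n_k\,V^\beta_{n,k}(\theta,t),
\end{equation*}
which is globally defined on $\T^d\times\R_+$ (the linear term is in $t$, not $\theta\cdot n$, so periodicity is preserved), solves the lifted $M^TA^*M$ system because $D\cdot\bigl(B_n D(-te^\beta+n_k\chi^{*\beta}_k(\theta-tn))\bigr)=n_k\partial_i\bigl(a^{*\gamma\beta}_{ik}+a^{*\gamma\delta}_{ij}\partial_j\chi^{*\delta\beta}_k\bigr)=0$ by the cell problem, and has $W^\beta(\theta,0)=0$. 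Pairing $V_n$ (solving the $M^TAM$ system with data $\phi$) against $W^\beta$, the $t=0$ boundary term becomes $\int_{\T^d}\phi\cdot(B_n DW^\beta)_d(\theta,0)\,d\theta$ (only the conormal derivative of $W^\beta$ survives), and as $T\to\infty$ the $t=T$ term produces $V_\infty^\gamma\,\widehat a^{*\gamma\beta}_{ik}n_in_k$ upon averaging the periodic flux of $-te^\beta+n_k\chi^{*\beta}_k(\theta-tn)$ in $\theta$; inverting with $h^{\alpha\beta}$ and computing $(B_n DW^\beta)_d(\theta,0)$ explicitly reproduces the bracket in (\ref{eq_mu_barf}). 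You correctly identified that the Diophantine decay (\ref{est_DVDio}) is what justifies the $T\to\infty$ limit, though one further point worth noting is that the limiting flux average $\widehat a^{*\gamma\beta}_{ik}n_in_k$ is actually $T$-independent (the corrector flux at $t=T$ is a shift by $Tn$ of a fixed periodic integrand), so the Diophantine condition is needed only for the decay of $D\tilde V_n$, of $V_n-V_\infty$, and of $DV^\beta_{n,k}$.
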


\begin{proof}
   This was  proved in \cite{AKMP16} (also see \cite{Z17}).
   \end{proof}
   
We now turn to the proof of Theorem \ref{main-thm-01}. 
The key step is to prove the following.

\begin{theorem}\label{thm_MU}
Fix $\sigma \in (0, 1)$.
Let $x, y\in \partial\Omega$ and $|x-y|\le c_0$.
Suppose that $n(x)$, $n(y)\in\bS^{d-1}_D$.
Then  
	\begin{equation}\label{est_mu12}
		| \overline{f}(x) - \overline{f}(y)| \le C_\sigma \varkappa^{-\sigma} |x-y| 
		\left(\int_{\T^d} \| f(\cdot, y)\|_{C^1(\partial\Omega)}^2\, dy \right)^{1/2},
	\end{equation}
	where $\varkappa = \max \big\{\varkappa(n(x)), \varkappa(n(y)) \big\}$ and 
	$C_\sigma$ depends only on $d$, $m$, $\sigma$, $\lambda$, and $\| A\|_{C^k(\T^d)}$ for some
	$k=k(d, \sigma)\ge 1$.
\end{theorem}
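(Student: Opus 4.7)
The plan is to insert the explicit formula \eqref{eq_mu_barf} of Theorem~\ref{thm_mu} for $\overline{f}^\alpha(x)$ and $\overline{f}^\alpha(y)$, subtract, and estimate each resulting piece, thereby reducing matters to the key $L^2$-estimate \eqref{main-001} for the normal derivative $\partial_t V_n(\theta,0)$. Write $n=n(x)$, $\widetilde{n}=n(y)$; since $\partial\Omega$ is smooth, $|n-\widetilde{n}|\le C|x-y|$. Telescoping through the factors in the integrand of \eqref{eq_mu_barf} produces contributions involving the difference $h(n)-h(\widetilde{n})$ of the inverse matrices; the polynomial differences $n_\ell n_k-\widetilde{n}_\ell\widetilde{n}_k$, $n_in_j-\widetilde{n}_i\widetilde{n}_j$, $n_k-\widetilde{n}_k$; the data difference $f^\nu(x,\theta)-f^\nu(y,\theta)$; and the crucial flux difference $\partial_t V^{\gamma\beta}_{n,k}(\theta,0)-\partial_t V^{\gamma\beta}_{\widetilde{n},k}(\theta,0)$.

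Ellipticity of $\widehat{A}^*$ makes $n\mapsto h(n)$ a smooth function on $\bS^{d-1}$ with derivatives controlled by $\|A\|_{C^k(\T^d)}$, and $\chi^*$ is smooth on $\T^d$ with similar control. Consequently, every contribution other than the flux difference is bounded pointwise in $\theta$ by a sum of terms of the form $C|n-\widetilde{n}|\,|f(x,\theta)|$ and $|f(x,\theta)-f(y,\theta)|\le \|f(\cdot,\theta)\|_{C^1(\partial\Omega)}|x-y|$, together with a factor given by the $L^2_\theta$ bound on $\partial_t V^{*\beta}_{\widetilde{n},k}(\cdot,0)$ inherited from \eqref{est_DDrV} applied to the Dirichlet problem \eqref{eq_Vn} (whose boundary data $-\chi_k^{*\beta}$ lies in $C^\infty(\T^d)$). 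Cauchy--Schwarz in $\theta$ then yields a bound $C|x-y|\bigl(\int_{\T^d}\|f(\cdot,\theta)\|_{C^1(\partial\Omega)}^2\,d\theta\bigr)^{1/2}$, with no Diophantine loss. The remaining flux-difference contribution is handled by Cauchy--Schwarz in $\theta$ and the estimate \eqref{main-001} applied to $V^{\gamma\beta}_{n,k}$, producing a factor $C_\sigma\varkappa^{-\sigma}|n-\widetilde{n}|\le C_\sigma\varkappa^{-\sigma}|x-y|$, which gives \eqref{est_mu12}.

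\emph{The main obstacle is establishing \eqref{main-001}.} My approach would set $W=V_n-V_{\widetilde{n}}$ on $\T^d\times(0,\infty)$. Because $B_n$ depends on $n$ both through the orthogonal frame $M_n=(N_n,-n)$ and through the oscillating argument $A^*(\theta-tn)$, the function $W$ satisfies a lifted degenerate system whose source is schematically $(B_n-B_{\widetilde{n}})\,\nabla_{\theta,t}V_{\widetilde{n}}$, plus a boundary error from the frame change at $t=0$; pointwise $|B_n-B_{\widetilde{n}}|\lesssim (1+t)|n-\widetilde{n}|$, the linear growth in $t$ reflecting the drift of the argument. On a slab $\{0<t<T\}$ the energy method based on ellipticity \eqref{ellipticity} controls $\nabla W$ in $L^2(\T^d\times(0,T))$ with a loss polynomial in $T$; on $\{t>T\}$, the Diophantine decay \eqref{est_DVDio} controls $V_n$ and $V_{\widetilde{n}}$ individually, hence $W$, at rate $(1+\varkappa T)^{-\ell}$. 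Optimizing with $T\simeq \varkappa^{-1+\sigma}$ balances the two regions and produces the $\varkappa^{-\sigma}$ loss. To convert this bulk bound into the $L^2_\theta$ control of $\partial_t W(\cdot,0)$ demanded by \eqref{main-001}, I would use the correspondence \eqref{relation-u-V} to move to the half-space $\bH^d_n(s)$ and invoke the large-scale boundary Lipschitz estimate underlying Theorem~\ref{thm_Poi} (Avellaneda--Lin), which upgrades $L^2$ control of $W$ and $\nabla W$ near the boundary to an $L^2_\theta$ control of $\partial_t W$ at $\{t=0\}$. The delicate point is the H\"older-only dependence of the boundary layer tails $V_{n,\infty}$ on $n$: after subtracting these constants, the Diophantine decay is precisely what is needed to upgrade H\"older to Lipschitz at the level of $\partial_t V_n(\cdot,0)$, up to an arbitrarily small $\sigma>0$ loss.
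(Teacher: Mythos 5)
Your high-level plan agrees with the paper: insert the formula (\ref{eq_mu_barf}), telescope the products, handle all $n$-dependence except the flux term by smoothness of $h$, $\chi^*$ and the pointwise bound $|\partial_t V_n(\cdot,0)|\le C\|\phi\|_{C^2}$ coming from (\ref{est_DrV}), and reduce the theorem to the $L^2_\theta$ estimate (\ref{main-001}) for $\partial_t V_n(\theta,0)-\partial_t V_{\widetilde n}(\theta,0)$. Setting $W=V_n-V_{\widetilde n}$, deriving an inhomogeneous lifted system for $W$ with source controlled by $|B_n-B_{\widetilde n}|\lesssim (1+t)|n-\widetilde n|$, and trading algebraic decay against Diophantine decay to produce the $\varkappa^{-\sigma}$ loss are also all consistent with the paper. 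A minor error to flag: $W$ has \emph{zero} Dirichlet data at $t=0$ because the boundary data $\phi=-\chi^{*\beta}_k$ of (\ref{eq_Vn}) does not depend on $n$; your ``boundary error from the frame change at $t=0$'' does not occur.

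The genuine gap is in the passage from a bulk estimate on $W$ to the boundary trace estimate for $\partial_t W(\cdot,0)$. Your proposal is to move to the half-space via (\ref{relation-u-V}) and apply the Avellaneda--Lin boundary Lipschitz estimate, but this step is not available for $W$: by (\ref{eq_Vthet}), $V_n(\theta,t)=u_n^{-\theta\cdot n}(\theta-tn)$ and $V_{\widetilde n}(\theta,t)=u_{\widetilde n}^{-\theta\cdot \widetilde n}(\theta-t\widetilde n)$ are pullbacks of solutions that live in \emph{different} half-spaces ($\bH^d_n$ and $\bH^d_{\widetilde n}$) and are evaluated at different points. Consequently $W$ is not the restriction of a single $\cL^*_1$-solution in any fixed half-space, and the clean large-scale Lipschitz estimate does not upgrade bulk $L^2$ control of $W$ to boundary control of $\partial_t W$ in the way you describe. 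The paper avoids the half-space entirely at this stage and instead uses the elementary 1D trace inequality
\begin{equation*}
\int_{\T^d}|\partial_t W(\theta,0)|^2\,d\theta \le 2\int_0^1\!\!\int_{\T^d}|\partial_t W|^2\,d\theta\,dt+2\int_0^1\!\!\int_{\T^d}|\partial_t^2 W|^2\,d\theta\,dt,
\end{equation*}
which requires controlling $\partial_t^2 W$ --- a step absent from your proposal. Getting $\partial_t^2 W$ is nontrivial because the lifted system is degenerate in the $n\cdot\nabla_\theta$ direction; the paper first differentiates (\ref{eq_tWGH}) in the tangential directions $\nabla_{2j}=N_{2j}^T\!\cdot\!\nabla_\theta$ and uses a Caccioppoli argument to bound $N_2^T\nabla_\theta\otimes N_2^T\nabla_\theta W$ and $\partial_t N_2^T\nabla_\theta W$ on the slab $[0,1]$, and then solves the PDE algebraically for $b_{2,dd}\partial_t^2 W$ (using uniform ellipticity, hence invertibility of $b_{2,dd}$) in terms of those mixed derivatives, $G$, and $H$. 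A second, smaller discrepancy: the paper's bulk estimate is not a ``split at $t\simeq T$ and optimize'' argument; it applies a global weighted energy inequality (Lemma \ref{lem_A1est}, with weight $t^{\sigma-1}$) to (\ref{eq_tWGH}) and then estimates the source terms using \emph{interpolation} between the algebraic decay (\ref{est_DrV})--(\ref{est_DrdkV}) and the Diophantine decay (\ref{est_DVDio}). Finally, your closing remark about subtracting the boundary layer tails $V_\infty$ is beside the point: $V_\infty$ is a constant and drops out of $\partial_t V_n(\cdot,0)$, which is the quantity that enters (\ref{eq_mu_barf}).
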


To prove Theorem \ref{thm_MU}, in view of the formula (\ref{eq_mu_barf}),
we investigate the continuity in $n$ of the solution to the Dirichlet problem (\ref{eq_Vs}).

\begin{lemma}\label{lem_V}
For $\phi\in C^\infty(\T^d; \R^m)$,
	let $V$ be the solution of (\ref{eq_Vs}), given by Lemma  \ref{lemma-DKV}, 
 with  $n\in \bS^{d-1}$. Then
	\begin{equation}\label{est_DrV}
		|N^T\nabla_\theta V| + |\partial_t V| \le \frac{C \norm{\phi}_{C^2(\T^d)}}{1+t},
	\end{equation}
	where $C$ depends only on $d$, $m$ and $A$. 
	Moreover, for any $|\alpha|$, $j\ge 0$ and $0<\sigma<1$,
	\begin{equation}\label{est_DrdkV}
		|N^T\nabla_\theta \partial_\theta^\alpha \partial_t^j V| + |\partial_\theta^\alpha \partial_t^{1+j} V| \le
		\frac{ C_\sigma \norm{\phi}_{C^k(\T^d)}}{(1+t)^{1-\sigma}},
	\end{equation}
	where  $k = k(|\alpha|,j,\sigma,d)$ and $C_\sigma$ depends only on $d$, $m$,
	$|\alpha|$, $j$, $\sigma$ and $A$.
\end{lemma}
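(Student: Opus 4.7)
The cornerstone is Remark~\ref{rmk_coinc}: setting $u^s(x) = V(x - (x\cdot n)n - sn, -x\cdot n - s)$, the function $u^s$ solves the half-space Dirichlet problem (\ref{eq_Hu}). A direct chain-rule computation at corresponding points yields
\begin{equation*}
|\nabla_x u^s(x)|^2 = |N^T \nabla_\theta V(\theta, t)|^2 + |\partial_t V(\theta, t)|^2,
\end{equation*}
with $t = \delta(x)$. Consequently, pointwise bounds on $(N^T\nabla_\theta V, \partial_t V)$ are equivalent to pointwise bounds on $\nabla u^s$ for the half-space problem, and I can invoke elliptic regularity tailored to $\cL_1^*$ on $\bH^d_n(s)$.

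To prove (\ref{est_DrV}), I would split according to the boundary distance $t$. If $t \ge 1$, the ball $B(x, t/2)$ lies inside $\bH^d_n(s)$, so the large-scale interior Lipschitz estimate of Avellaneda--Lin \cite{AveLin87}, combined with the Miranda--Agmon maximum principle (\ref{max-p}), gives $|\nabla u^s(x)| \le C t^{-1} \|u^s\|_{L^\infty} \le C t^{-1}\|\phi\|_{L^\infty}$. If $t < 1$, then, with $x_0 \in \partial\bH^d_n(s)$ the closest boundary point, the Avellaneda--Lin boundary Lipschitz estimate in $B(x_0, 2) \cap \bH^d_n(s)$ yields $|\nabla u^s(x)| \le C(\|u^s\|_{L^\infty} + \|\phi\|_{C^{1,\alpha}}) \le C\|\phi\|_{C^2}$. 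Combining the two regimes and translating back through the identity above yields (\ref{est_DrV}).

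For (\ref{est_DrdkV}), I would iterate by differentiating the equation (\ref{eq_Vs}) in $\theta$ and $t$. Each derivative $\partial_\theta^\alpha \partial_t^j V$ satisfies a perturbed version of the lifted system with a right-hand side built from lower-order derivatives of $V$ via derivatives of $B_n$. Moreover, because the $(d,d)$-block $B_n^{dd} = n^T A^* n$ is uniformly invertible, the equation allows $\partial_t^{1+j} V$ ($j \ge 1$) to be written algebraically in terms of derivatives carrying at most one $\partial_t$, reducing the estimate to bounding $(N^T\nabla_\theta V, \partial_t V)$-derivatives of $\partial_\theta^\beta V$. The bulk of the estimate follows by applying the Step~1 argument to the renormalized solution $w := u^s - u^s(x_0)$ in $B(x_0,1)$: Step~1 furnishes $\|w\|_{L^\infty(B(x_0,1))} \le C(1+t)^{-1}\|\phi\|_{C^2}$, and classical interior Schauder at unit scale (valid since $A^*\in C^\infty(\T^d)$) then gives $|\nabla^{k+1} u^s(x_0)| = |\nabla^{k+1} w(x_0)| \le C_k (1+t)^{-1}\|\phi\|_{C^K}$; boundary Schauder handles the regime $t<1$.

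The main obstacle is the $n$-component of the $\theta$-derivative of $V$: this is precisely the direction in which the lifted operator $-\bigl(N^T\nabla_\theta, \partial_t\bigr)\cdot B_n\bigl(N^T\nabla_\theta, \partial_t\bigr)$ degenerates, and it does not correspond to a derivative of any single $u^s$. To treat it, I would combine the global energy bound (\ref{est_DDrV}) with Sobolev embedding in $\T^d$ to upgrade $L^2$-in-$\theta$ control to pointwise control, after first differentiating the PDE in $\theta$ to get an inhomogeneous lifted system whose source is controlled by the base estimate. This interpolation is what produces the arbitrarily small loss $\sigma$ in the exponent $(1+t)^{-1+\sigma}$; the loss disappears under Diophantine assumptions, compare (\ref{est_DVDio}). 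The order $k=k(|\alpha|,j,\sigma,d)$ is chosen large enough to absorb all required smoothness of $A$ and of $\phi$ in these Schauder and Sobolev steps.
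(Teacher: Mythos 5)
Your proposal is essentially correct and matches the paper's strategy: reduce to the half-space solutions $u^s$ via the identity $V(\theta,t)=u^{-\theta\cdot n}(\theta-tn)$, derive (\ref{est_DrV}) from the Lipschitz/Poisson-kernel bound (\ref{est_DPoison}) together with the maximum principle (\ref{max-p}) and the boundary Lipschitz estimate near $\partial\bH^d_n(s)$, and then obtain (\ref{est_DrdkV}) by interpolating between the $(1+t)^{-1}$ pointwise decay and the uniform higher-order bound (\ref{est_DDrV}), followed by Sobolev embedding on $\T^d\times[L,L+1]$. You also correctly identify the central difficulty: $\partial_\theta$ in the $n$-direction corresponds to a derivative in the parameter $s$ of $u^s$, not to an $x$-derivative, so it is invisible to Schauder or Lipschitz estimates for a single $u^s$, and the loss of $\sigma$ comes precisely from handling that direction through interpolation.

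Where your write-up diverges from the paper is that you layer on several steps that are not needed. The paper does not differentiate the lifted PDE, does not invoke the invertibility of the block $n^TA^*n$ to solve algebraically for $\partial_t^2 V$ (that trick is used later, in Step~3 of the proof of Theorem~\ref{thm_MU}, but not here), and does not use Schauder estimates for $u^s$. The entire proof of (\ref{est_DrdkV}) is a single interpolation:
\begin{equation*}
\norm{N^T\nabla_\theta V}_{H^{r+\frac{d}{2}+1}(\T^d\times[L,L+1])}\le C\,\norm{N^T\nabla_\theta V}_{L^2(\T^d\times[L,L+1])}^{1-\sigma}\,\norm{N^T\nabla_\theta V}_{H^{k-1}(\T^d\times[L,L+1])}^{\sigma}\le C(1+L)^{-(1-\sigma)}\norm{\phi}_{C^k},
\end{equation*}
with the first factor bounded by (\ref{est_DrV}) and the second by (\ref{est_DDrV}), followed by Sobolev embedding; the $H^{k-1}$ norm already contains all $\theta$-derivatives, so there is no need to treat the $n$-direction separately. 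Your Schauder digression would only produce bounds on $x$-derivatives of $u^s$, i.e. on tangential derivatives of $V$; on its own it cannot reach (\ref{est_DrdkV}), so the interpolation argument you describe in your final paragraph is carrying all the weight and could stand alone. Everything you wrote is compatible with the paper's proof, but you can excise the PDE-differentiation and Schauder steps without loss.
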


\begin{proof}
	Let $u^s$ be given by (\ref{eq_uV0}).
	Then
	\begin{equation}\label{eq_us}
		\left\{
		\begin{aligned}
			\cL^*_1 u^s &= 0 &\quad & \txt{in } \bH^d_n (s), \\
			u^s &= \phi &\quad & \txt{on } \partial\bH^d_n(s).
		\end{aligned}
		\right.
	\end{equation}
	It follows from  (\ref{eq_uV0}) that
	\begin{equation}\label{eq_Vthet}
		V(\theta,t) = u^{-\theta \cdot n}(\theta - tn) \quad \txt{for all } (\theta,t) \in \T^d\times \R_+.
	\end{equation}
	Thanks to the fact $N^T\nabla_\theta (\theta\cdot n) = 0$, the last equality implies that
	\begin{equation}\label{eq_Vtt}
		\left\{
		\begin{aligned}
			N^T \nabla_\theta V(\theta,t) &= N^T \nabla_x u^{-\theta \cdot n}(\theta - tn), \\
			\partial_t V(\theta,t) &= -n\cdot \nabla_x u^{-\theta \cdot n}(\theta - tn).
		\end{aligned}
		\right.
	\end{equation}
	As a result, estimates for $N^T \nabla_\theta V$ and 
	$\partial_t V$ may be reduced to the corresponding estimates for $u^s$.
	
	It follows from the presentation of Poisson integral (\ref{eq_Pint}) and the pointwise estimate (\ref{est_DPoison}) that
	\begin{equation}\label{est_Dus}
		|\nabla u^s(x)| \le \frac{C \norm{\phi}_{\infty}}{|s+x\cdot n|}.
	\end{equation}
	To deal with the  case where
	$|s+x\cdot n| = \txt{dist}(x,\partial\bH^d_n(s))<1$, we first
	note that $\norm{u^s}_\infty \le C \norm{\phi}_\infty$ by (\ref{max-p}). 
	Next, by the boundary Lipschitz estimate, we obtain $|\nabla u^s(x)| \le C \norm{\phi}_{C^2(\T^d)}$ 
	if $\txt{dist}(x,\partial\bH^d_n (s))<1$. This, together with (\ref{est_Dus}) and (\ref{eq_Vtt}), proves (\ref{est_DrV}). 
	
	Finally, we prove the inequality (\ref{est_DrdkV}) 
	by using interpolation and the Sobolev embedding. 
	Precisely, for any $L>0$, it follows from (\ref{est_DrV}), (\ref{est_DDrV}) and interpolation that
	\begin{align*}
		\norm{N^T \nabla_\theta V}_{H^{r+\frac{d}{2}+1}(\T^d\times [L,L+1])} 
		& \le C \norm{N^T \nabla_\theta V}_{L^2(\T^d \times [L,L+1])}^{1-\sigma}
		 \norm{N^T \nabla_\theta V}_{H^{k-1}(\T^d\times [L,L+1])}^{\sigma} \\
		& \le C (1+L)^{-(1-\sigma)} \norm{\phi}_{C^{k}(\T^d)},
	\end{align*}
	where $k=k(d, r, \sigma)\ge 1$ is sufficiently large. 
	It follows from the Sobolev embedding theorem that
	\begin{align*}
		\sup_{(\theta,t)  \in \T^d\times [L,L+1]} |N^T \nabla_\theta \partial_\theta^\alpha \partial_t^j V(\theta,t)| 
		& \le C \norm{N^T \nabla_\theta V}_{H^{r+d/2+1}(\T^d\times [L,L+1])} \\
		& \le \frac{ C\norm{\phi}_{C^{k}(\T^d)}}{(1+L)^{1-\sigma}},
	\end{align*}
	which readily implies
	\begin{equation}\label{key}
		|N^T \nabla_\theta \partial_\theta^\alpha \partial_t^j V(\theta,t)| \le \frac{C\norm{\phi}_{C^k(\T^d)}}{(1+t)^{1-\sigma}} \qquad \txt{for any } (\theta,t)\in \T^d\times \R_+,
	\end{equation}
	where $|\alpha|+j\le r$. 
	A similar argument gives the pointwise estimate for $|\partial_\theta^\alpha \partial_t^{1+j} V|$.
\end{proof}

\begin{lemma}
	Let $V$ be the solution of (\ref{eq_Vs}) with $n\in \bS^{d-1}$. Then
	\begin{equation}\label{est_Vbd}
		|V| + |\nabla_\theta V| \le C \norm{\phi}_{C^2(\T^d)},
	\end{equation}
	where $C$ depends only on $d$, $m$ and $A$. 
	Moreover, if $n\in \bS^{d-1}_D$ with Diophantine constant $\varkappa = \varkappa(n)>0$,
	 then for any $|\alpha|\ge 2$ and $0<\sigma<1$,
	\begin{equation}\label{est_dkVbd}
		|\partial_\theta^\alpha V| \le C \varkappa^{-\sigma} \norm{\phi}_{C^k(\T^d)},
	\end{equation}
	where $k = k(d,|\alpha|,\sigma)>1$ and $C$ depends only on $d$, $m$, $|\alpha|$, $\sigma$ and
	$A$.
\end{lemma}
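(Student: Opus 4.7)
The plan is to establish the three bounds separately.

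For $|V|\le C\norm{\phi}_\infty$, I invoke Remark \ref{rmk_coinc}: $V(\theta,t)=u^{-\theta\cdot n}(\theta-tn)$ where $u^s$ is the bounded solution of (\ref{eq_Hu}). The Miranda--Agmon bound (\ref{max-p}) then yields $|V|\le\norm{u^s}_\infty\le C\norm{\phi}_\infty$.

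For $|\nabla_\theta V|$, I decompose $\nabla_\theta V = NN^T\nabla_\theta V + n(n\cdot\nabla_\theta V)$. The first term is controlled by (\ref{est_DrV}) in Lemma \ref{lem_V}. For the second, applying the chain rule to $V(\theta,t)=u^{-\theta\cdot n}(\theta-tn)$ gives $\nabla_\theta V = \nabla_x u^s - n\partial_s u^s$, which combined with (\ref{eq_Vtt}) yields $n\cdot\nabla_\theta V = -\partial_t V - \partial_s u^s$. Since $|\partial_t V|\le C\norm{\phi}_{C^2}$, the question reduces to showing $|\partial_s u^s|\le C\norm{\phi}_{C^2}$. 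To that end, I fix $s_1<s_2$ and note that $u^{s_1}-u^{s_2}$ is bounded and $\cL^*_1$-harmonic on the common half-space $\bH^d_n(s_2)\subset\bH^d_n(s_1)$; the Miranda--Agmon principle gives
\[ \norm{u^{s_1}-u^{s_2}}_{L^\infty(\bH^d_n(s_2))}\le\sup_{y\in\partial\bH^d_n(s_2)}|u^{s_1}(y)-\phi(y)|. \]
Parametrizing $y=y'-(s_2-s_1)n$ with $y'\in\partial\bH^d_n(s_1)$ and using the fundamental theorem of calculus, the right-hand side equals $\left|\int_0^{s_2-s_1}\bigl[n\cdot\nabla\phi(y'-rn)-n\cdot\nabla u^{s_1}(y'-rn)\bigr]dr\right|$. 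The Avellaneda--Lin boundary Lipschitz estimate $|\nabla u^{s_1}|\le C\norm{\phi}_{C^2}$ (already invoked in the proof of Lemma \ref{lem_V}) bounds this expression by $C\norm{\phi}_{C^2}|s_2-s_1|$. Sending $s_2\to s_1$ gives the desired pointwise bound on $\partial_s u^s$.

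For $|\partial_\theta^\alpha V|\le C\varkappa^{-\sigma}\norm{\phi}_{C^k}$ with $|\alpha|\ge 2$, I combine two bounds. Since $V_\infty$ is a constant, (\ref{est_DVDio}) with $\ell=0$ gives the Diophantine estimate $|\partial_\theta^\alpha V|\le C\varkappa^{-1}\norm{\phi}_{C^{k_1}}$. Independently, iterating the argument of the previous step yields a uniform-in-$n$ bound $|\partial_\theta^\alpha V|\le C\norm{\phi}_{C^{k_2}}$: expanding $\partial_\theta^\alpha V$ as a linear combination of $\partial_x^\beta\partial_s^\gamma u^s$ with $|\beta|+\gamma=|\alpha|$, the purely spatial terms are bounded by boundary Schauder estimates for $\cL^*_1$, while terms with $s$-derivatives are handled by applying the Lipschitz-in-$s$ comparison recursively (each $\partial_s^j u^s$ solves an $\cL^*_1$-Dirichlet problem whose boundary data is controlled by lower-order spatial and $s$-derivatives of $u^s$ and $\phi$). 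A case split then closes the argument: if $\varkappa\ge 1$ then $\varkappa^{-1}\le\varkappa^{-\sigma}$ and the Diophantine bound suffices; if $\varkappa<1$ then $1\le\varkappa^{-\sigma}$ and the uniform bound suffices. Taking $k=\max(k_1,k_2)$ yields the claim.

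The main obstacle is the Lipschitz-in-$s$ estimate for $u^s$. Naive differentiation of the Poisson representation would require pointwise estimates on $\nabla_y P^*$ not provided by Theorem \ref{thm_Poi}, whereas the max-principle comparison on the common half-space bypasses this entirely, reducing the estimate to a boundary calculation that uses only the Avellaneda--Lin boundary Lipschitz bound. The recursion in the Diophantine case then amounts to bookkeeping with Schauder estimates.
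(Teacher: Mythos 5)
Your proofs of $|V|\le C\|\phi\|_\infty$ and of $|\nabla_\theta V|\le C\|\phi\|_{C^2}$ follow the paper's route: the first via the Miranda--Agmon principle, the second by reducing $n\cdot\nabla_\theta V$ to a Lipschitz-in-$s$ estimate for $u^s$ obtained by comparing $u^{s_1}$ and $u^{s_2}$ on the common half-space (the paper records this as inequality (\ref{e-10}) and then argues by a difference quotient on $V$ rather than by writing $\partial_s u^s$ directly, which sidesteps having to justify differentiability in $s$ — a cosmetic difference).

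Your treatment of (\ref{est_dkVbd}) is, however, genuinely different from the paper's, and this is where I'd push back. The paper's proof is a one-line Gagliardo--Nirenberg interpolation on a slab $\T^d\times[L,L+1]$: bound $\sup|\partial_\theta^\alpha V|$ by $\|V\|_{H^{d/2+|\alpha|+1}}$, interpolate the latter between $\|V\|_{H^1}\lesssim 1$ (from (\ref{est_Vbd}) plus $|\partial_t V|\lesssim 1$) and $\|V\|_{H^r}\lesssim\varkappa^{-1}$ (from (\ref{est_DVDio})), and read off $\varkappa^{-\sigma}$. This uses only estimates already established and is very short. You instead propose to prove a uniform-in-$n$ bound $|\partial_\theta^\alpha V|\le C\|\phi\|_{C^{k}}$ and combine it with the $\varkappa^{-1}$ bound via a case split on $\varkappa\gtrless 1$. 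Two remarks. First, the case split is vacuous: for $\xi=e_i$ the Diophantine inequality forces $\varkappa\le\sqrt{1-n_i^2}$ for each $i$, and since some $n_i^2\ge 1/d$ one always has $\varkappa\le\sqrt{1-1/d}<1$, so your argument rests entirely on the uniform bound. Second, that uniform bound is a strictly stronger claim than (\ref{est_dkVbd}), and the sketch you give to prove it — expanding $\partial_\theta^\alpha V$ into $\partial_x^\beta\partial_s^\gamma u^s$ and running a recursion in which each $\partial_s^j u^s$ is shown to be a bounded $\cL_1^*$-harmonic function with boundary data involving lower-order derivatives — is not a small piece of bookkeeping. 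To close it, at each level $j$ you need (i) an a priori bound showing $\partial_s^j u^s$ is bounded (or sublinear) so the maximum principle of Remark \ref{remark-20} applies, which means replaying the Lipschitz-in-$s$ comparison for $\partial_s^{j-1}u^s$, whose boundary data on $\partial\bH^d_n(s_2)$ mixes traces of $u^{s_1}$ and $u^{s_2}$ from different hyperplanes; and (ii) higher-order uniform boundary Schauder estimates (not just boundary Lipschitz) for $\cL_1^*$ on a half-space, applied to the non-periodic, hyperplane-dependent boundary data $n\cdot\nabla u^s-n\cdot\nabla\phi$. These steps are plausible but amount to a separate, longer argument than the lemma itself. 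If you want to keep this route, you should state and justify the uniform bound as a separate proposition; otherwise the interpolation argument is far more economical and is what the paper actually does.
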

\begin{proof}
	Again, the desired estimates for $V$ will be reduced to estimates for
	 solutions $u^s$ of (\ref{eq_us}), where $V$ and $u^s$ are related by (\ref{eq_Vthet}). 
	 First, since $\norm{u^s}_\infty \le C \norm{\phi}_\infty$,
	 we obtain $|V| \le C \norm{\phi}_\infty$. 
	 Next, by comparing $u^s$ and $u^{s'}$ in the common domain, we 
	 may deduce from the boundary Lipschitz estimate and the Miranda-Agman maximal principle 
	 (\ref{max-p}) that 
	 \begin{equation}\label{e-10}
	 |u^s(x) - u^{s'}(x)|  \le  C |s-s'| \norm{\phi}_{C^2(\T^d)},
	 \end{equation}
	  if $ x\cdot n < -\max\{s,s'\} $.
	Observe that, to prove the boundedness of $\nabla_\theta V$, it suffices to prove the boundedness of $n\cdot \nabla_\theta V$, as $N^T \nabla_\theta V$ is  bounded due to (\ref{est_DrV}). To this end, note that
	\begin{align*}
		&|V(\theta+rn,t) - V(\theta,t)| = |u^{-\theta \cdot n-r}(\theta+rn - tn) - u^{-\theta \cdot n}(\theta - tn)| \\
		&  \le |u^{-\theta \cdot n-r}(\theta+rn - tn) - u^{-\theta \cdot n-r}(\theta - tn)| + |u^{-\theta \cdot n-r}(\theta - tn) - u^{-\theta \cdot n}(\theta - tn)| \\
		&  \le |r| \norm{\nabla u^{-\theta \cdot n-r}}_\infty + \norm{u^{-\theta\cdot n-r} - u^{-\theta\cdot n}}_{\infty} \\
		&  \le C |r| \norm{\phi}_{C^2(\T^d)},
	\end{align*}
	where we have used (\ref{e-10}) for the last step.
	Dividing by $r$ on both sides and taking the limit as $r\to 0$, we 
	obtain $|n\cdot \nabla_\theta V| \le C \norm{\phi}_{C^2(\T^d)}$. This finishes the proof of (\ref{est_Vbd}).
	
	Finally, to show (\ref{est_dkVbd}), we use (\ref{est_Vbd}), (\ref{est_DVDio}) and an interpolation argument. Precisely, let $L>0$ and $t\in [L,L+1]$,
	\begin{align*}
		\sup_{(\theta,t)  \in \T^d\times [L,L+1]} |\partial_\theta^\alpha V(\theta,t)| 
		&\le C  \norm{V}_{H^{d/2+|\alpha|+1}(\T^d\times [L,L+1])} \\
		& \le C \norm{V}_{H^1(\T^d\times [L,L+1])}^{1-\sigma} \norm{V}_{H^r(\T^d \times [L,L+1])}^{\sigma} \\
		& \le C  \varkappa^{-\sigma} \norm{\phi}_{C^k(\T^d)},
	\end{align*}
	where $|\alpha|\ge 2$ and $r=r(d, \alpha, \sigma)$, $k = k(d,|\alpha|,\sigma)$ are 
	sufficiently large. 
	The desired estimate follows.
\end{proof}

Now we are ready to prove Theorem \ref{thm_MU}.

\begin{proof}[\bf Proof of Theorem \ref{thm_MU}]

	{\bf Step 1: Set-up and reduction.}
	
	\medskip
	
	Fix $n_1, n_2\in \bS^{d-1}_D$.
	We may assume that $\delta = |n_1 - n_2|>0$ is sufficiently small. 
	Let $N_1$ and $N_2$ be the $d\times (d-1)$ matrices such that both $M_1 = (N_1,-n_1)$ and $M_2 = (N_2,-n_2)$ are orthogonal matrices. Recall that solution $V_1$ (resp. $V_2$) of (\ref{eq_Vs}), associated with $n_1$ (resp. $n_2$), is independent of the choices of $N_1$ (resp. $N_2$). So without loss of generality, we 
	may assume $|N_1-N_2|\le C \delta$. To be precise, we write down the systems 
	for $V_1$ and $V_2$ as follows:
	\begin{equation}\label{eq_V1}
		\left\{
		\begin{aligned}
			- \Dr{N^T_1 \nabla_\theta}{\partial_t} \cdot B_1
			\Dr{N^T_1 \nabla_\theta}{\partial_t} V_1
			&= 0  &\quad & \txt{ in } \T^d\times (0,\infty), \\
			V_1 &= \phi  &\quad & \txt{ on }  \T^d\times \{0\},
		\end{aligned}
		\right.
	\end{equation}
	and
	\begin{equation}
		\left\{
		\begin{aligned}
			- \Dr{N^T_2 \nabla_\theta}{\partial_t} \cdot B_2
			\Dr{N^T_2 \nabla_\theta}{\partial_t} V_2
			&= 0  &\quad & \txt{ in } \T^d\times (0,\infty), \\
			V_2 &= \phi  &\quad & \txt{ on }  \T^d\times \{0\},
		\end{aligned}
		\right.
	\end{equation}
	where $B_\ell (\theta,t) = M_\ell ^T A^*(\theta-tn_\ell)M_\ell$ for $\ell=1,2$
	and $\phi=-\chi_k^{*\beta}$.
	In view of Theorem \ref{thm_mu}, to show (\ref{est_mu12}), it suffices to prove that
	\begin{equation}\label{est_dtV12}
		\int_{\T^d}|\partial_t V_1(\theta,0) - \partial_t V_2(\theta,0)|^2\,
		 d\theta \le C \varkappa^{-2\sigma}|n_1 - n_2|^2 .
	\end{equation}
	Define $W = V_1 - V_2$. Observe that
	\begin{equation}\label{est_t01}
		\int_{\T^d} |\partial_t W(\theta,0)|^2 \, d\theta \le 2\int_0^1\int_{\T^d} |\partial_t W(\theta,t)|^2 \, d\theta dt + 2\int_0^1\int_{\T^d} |\partial_t^2 W(\theta,t)|^2 \, d\theta dt.
	\end{equation}
	Thus, the estimate (\ref{est_dtV12}) is further reduced to that for the two integrals 
	in the RHS of (\ref{est_t01}).
	We may assume that $\varkappa(n_1)\ge \varkappa(n_2)$ and thus $\varkappa = \varkappa(n_1)$. 
	
	\medskip
	
	{\bf Step 2: Estimate for $\partial_t W$.} 
	
	\medskip
	
	Note that $W$ satisfies $W(\theta,0) = 0$ and
	\begin{align}\label{eq_tW}
		\begin{aligned}
			& -  \Dr{N^T_2 \nabla_\theta}{\partial_t} \cdot B_2
			\Dr{N^T_2 \nabla_\theta}{\partial_t} W\\
			&=
			- \Dr{N^T_2 \nabla_\theta}{\partial_t} \cdot B_2
			\Dr{N^T_2 \nabla_\theta}{\partial_t}  V_1 \\
			& = \bigg[ \Dr{N^T_1 \nabla_\theta}{\partial_t} \cdot B_1
			\Dr{N^T_1 \nabla_\theta}{\partial_t} - \Dr{N^T_2 \nabla_\theta}{\partial_t} \cdot B_2
			\Dr{N^T_2 \nabla_\theta}{\partial_t} \bigg] V_1.
		\end{aligned}
	\end{align}
	By using
	\begin{align*}
		&  \Dr{N^T_1 \nabla_\theta}{\partial_t} \cdot B_1
		\Dr{N^T_1 \nabla_\theta}{\partial_t} - \Dr{N^T_2 \nabla_\theta}{\partial_t} \cdot B_2
		\Dr{N^T_2 \nabla_\theta}{\partial_t} \\
		& = - \Dr{N^T_2 \nabla_\theta}{\partial_t} \cdot B_2
		\Dr{(N^T_2-N^T_1) \nabla_\theta}{0}
		- \Dr{N^T_2 \nabla_\theta}{\partial_t} \cdot (B_2-B_1)
		\Dr{N^T_1 \nabla_\theta}{\partial_t} \\
		&\qquad\qquad +  \Dr{(N^T_2-N^T_1) \nabla_\theta}{0} \cdot B_1
		\Dr{N^T_1 \nabla_\theta}{\partial_t},
	\end{align*}
	the RHS of (\ref{eq_tW}) can be written as
	\begin{equation}\label{eq_G12H}
		\Dr{N^T_2 \nabla_\theta}{\partial_t} \cdot (G_1 + G_2) + H,
	\end{equation}
	where
	\begin{align*}
		G_1 & = -B_2 \Dr{(N^T_2-N^T_1) \nabla_\theta}{0} V_1,\\
		G_2 & = -(B_2-B_1)
		\Dr{N^T_1 \nabla_\theta}{\partial_t} V_1,\\
		H & =  \Dr{(N^T_2-N^T_1) \nabla_\theta}{0} \cdot B_1
		\Dr{N^T_1 \nabla_\theta}{\partial_t} V_1.
	\end{align*}
	Therefore, the equation (\ref{eq_tW}) is reduced to
	\begin{equation}\label{eq_tWGH}
		- \Dr{N^T_2 \nabla_\theta}{\partial_t} \cdot B_2
		\Dr{N^T_2 \nabla_\theta}{\partial_t} W = \Dr{N^T_2 \nabla_\theta}{\partial_t}\cdot G + H,
	\end{equation}
	where $G = G_1 + G_2$.
	
	\begin{lemma}\cite[Remark 6.3]{ShenZhuge16}\label{lem_A1est}
		Let $n\in \bS^{d-1}$  and $U$ be a smooth solution of
		\begin{equation}
			- \Dr{N^T \nabla_\theta}{\partial_t} \cdot B
			\Dr{N^T \nabla_\theta}{\partial_t} U = \Dr{N^T \nabla_\theta}{\partial_t} \cdot G + H,
		\end{equation}
		with $U(\cdot,0) = 0$. Assume that
		\begin{equation}
			\sup_{t>0,\theta\in \T^d} (1+t)\Big\{ |N^T\nabla_\theta U(\theta,t)| + |\partial_t U(\theta,t)| + |G(\theta,t)| + (1+t)|H(\theta,t)| \Big\} < \infty.
		\end{equation}
		Then, for any $0<\sigma<1$,
		\begin{equation}
			\int_0^\infty \int_{\T^d} \left(|N^T \nabla_\theta U|^2 + 
			|\partial_t U|^2\right) t^{\sigma-1}\,  d\theta dt \le
			C\int_0^\infty \int_{\T^d} \left(|G|^2 + t^2 |H|^2 \right) t^{\sigma-1} \, d\theta dt.
		\end{equation}
	\end{lemma}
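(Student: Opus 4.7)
The plan is a weighted energy estimate obtained by testing the equation against $U\eta(t)$ with the singular weight $\eta(t)=t^{\sigma-1}$, $\sigma\in(0,1)$. The decay hypothesis ensures that $\eta$, despite being singular at $t=0$ and non-integrable at infinity, produces finite integrals on both sides of the weak form: indeed $|N^T\nabla_\theta U|,|\partial_t U|,|G|=O(1/(1+t))$ makes $|\nabla U|^2\eta$ and $|G|^2\eta$ behave like $t^{\sigma-1}/(1+t)^2$, integrable at $t=0$ for $\sigma>0$ and at $t=\infty$ for $\sigma<2$, while $U(\theta,0)=0$ combined with the Lipschitz bound on $U$ near $t=0$ controls the boundary. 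If desired, one can introduce a cutoff $\eta_\delta$ supported in $[\delta,1/\delta]$, carry out the computation rigorously, and pass $\delta\to 0$ by dominated convergence at the end.

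Unfolding the weak form gives, after using $U(\cdot,0)=0$ to discard the boundary term at $t=0$,
\begin{equation*}
\int_0^\infty\!\!\int_{\T^d}\! B\Dr{N^T\nabla_\theta U}{\partial_t U}\!\cdot\!\Dr{N^T\nabla_\theta U}{\partial_t U}\eta\, d\theta dt = -\!\int\! B\Dr{N^T\nabla_\theta U}{\partial_t U}\!\cdot\!\Dr{0}{U\eta'} + \textrm{(source)},
\end{equation*}
where $\textrm{(source)}=\int G\cdot \Dr{N^T\nabla_\theta U}{\partial_t U}\eta + \int G\cdot \Dr{0}{U\eta'} + \int HU\eta$. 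Ellipticity of $B$, inherited from (\ref{ellipticity}), bounds the left-hand side below by $c\int(|N^T\nabla_\theta U|^2+|\partial_t U|^2)\eta$. Using the identity $t\eta'/\eta=\sigma-1$, which is bounded, and $2|ab|\le \epsilon a^2+\epsilon^{-1}b^2$, every remaining term is bounded by
\begin{equation*}
\epsilon\int\!(|N^T\nabla_\theta U|^2+|\partial_t U|^2)\eta + C_\epsilon\int\!\big(|U|^2 t^{-2}+|G|^2+t^2|H|^2\big)\eta.
\end{equation*}

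The decisive step is to absorb $\int |U|^2 t^{-2}\eta = \int|U|^2 t^{\sigma-3}\,d\theta dt$ through the one-dimensional weighted Hardy inequality, applied pointwise in $\theta$: since $U(\theta,0)=0$ and $\sigma<2$,
\begin{equation*}
\int_0^\infty |U(\theta,t)|^2 t^{\sigma-3}\,dt \le \frac{4}{(2-\sigma)^2}\int_0^\infty |\partial_t U(\theta,t)|^2 t^{\sigma-1}\,dt.
\end{equation*}
Integrating in $\theta$ and choosing $\epsilon$ small enough absorbs this into the left-hand side, which after removing the cutoff gives exactly the claimed inequality. The main technical obstacle is the careful bookkeeping of the boundary flux created by integrating by parts with the singular weight $\eta$; this is precisely why the hypothesis imposes pointwise decay of the form $(1+t)(|N^T\nabla_\theta U|+|\partial_t U|+|G|+(1+t)|H|)\in L^\infty$ rather than just $L^2$-integrability, and why the condition $U(\cdot,0)=0$ is essential. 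Once these issues are handled by the cutoff approximation, the weighted Hardy inequality does the rest.
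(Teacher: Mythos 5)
Your overall plan---test against $U\eta$ with $\eta=t^{\sigma-1}$, integrate by parts, and use the one-dimensional Hardy inequality---is the natural one for a weighted Caccioppoli-type estimate of this kind, and the Hardy inequality you quote is exactly the right one. However, there is a genuine gap in the absorption step, and it is not a matter of bookkeeping: the argument as written cannot close for small $\sigma$.

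The problematic term is the one coming from the weight derivative, namely $-\int \bigl(B\Dr{N^T\nabla_\theta U}{\partial_t U}\bigr)_{d}\, U\,\eta'$. Writing $\eta'=(\sigma-1)t^{-1}\eta$ and applying Cauchy--Schwarz with a parameter $\epsilon$, this is controlled by
$\epsilon\int |\mathcal D U|^2\eta + \tfrac{(1-\sigma)^2\Lambda^2}{4\epsilon}\int |U|^2 t^{-2}\eta$,
where $\mathcal D U=\Dr{N^T\nabla_\theta U}{\partial_t U}$ and $\Lambda\approx\lambda^{-1}$ bounds $B$. After Hardy, the second term is $\le \tfrac{(1-\sigma)^2\Lambda^2}{\epsilon(2-\sigma)^2}\int|\partial_t U|^2\eta$. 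The total coefficient multiplying $\int|\mathcal D U|^2\eta$ on the right is then at least $\epsilon+\tfrac{(1-\sigma)^2\Lambda^2}{\epsilon(2-\sigma)^2}$, whose minimum over $\epsilon$ is $\tfrac{2(1-\sigma)\Lambda}{2-\sigma}$. To absorb this into the ellipticity lower bound $\lambda\int|\mathcal D U|^2\eta$ you would need $\lambda>\tfrac{2(1-\sigma)\Lambda}{2-\sigma}$, which as $\sigma\to 0$ requires $\lambda>\Lambda$ and so is false whenever the ellipticity ratio is nontrivial (and fails for all $\sigma$ in a range bounded away from $1$). The issue is that the Hardy constant $4/(2-\sigma)^2$ is of order one; the absorption cannot be salvaged merely by making $\epsilon$ small, since the $C_\epsilon$ blows up faster.

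What is missing is the extra algebraic structure of this particular term. One cannot simply bound $(B\mathcal D U)_d U\eta'$ by Cauchy--Schwarz; one must integrate by parts in $t$ on the diagonal piece $\int B_{dd}^{s}\partial_t U\cdot U\,\eta'$, turning it into $\tfrac12\int B_{dd}^{s}|U|^2\eta''$ plus a bounded commutator, so that the Hardy bound comes tested against the \emph{same} coefficient $B_{dd}$ that appears in the ellipticity form (not against $\Lambda$), and one must use the vanishing of $\int_{\T^d} (N^T\nabla_\theta U)\cdot U\,d\theta$ (integration by parts over the torus, no boundary) to remove the off-diagonal piece $\int B_{dk}(N^T\nabla_\theta U)_k U\eta'$ up to lower-order commutators with $\partial_\theta B$. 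It is precisely this combination that produces a positive margin of the form $\lambda\sigma(2-\sigma)\int|\mathcal D U|^2\eta$, uniformly for $\sigma\in(0,1)$, rather than a coefficient that can turn negative. Alternatively, the estimate can be deduced from the observation that $t^{\sigma-1}$ is an $A_2$ weight in $t$ for $\sigma\in(0,2)$ together with $L^2$ solvability of the underlying half-space problem, which is the route [ShenZhuge16, Remark~6.3] actually takes. As written, your proposal does not identify either of these mechanisms and the absorption step does not go through.
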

	
	Applying Lemma \ref{lem_A1est} to the system (\ref{eq_tWGH}), we obtain
	\begin{equation}\label{est_DrW_GH}
	\aligned
		\int_0^\infty \int_{\T^d}  &\left(|N^T_2 \nabla_\theta W|^2 + |\partial_t W|^2\right) t^{\sigma-1} \, d\theta dt \\
		 & \le C  \int_0^\infty \int_{\T^d} \left(|G|^2 +  t^2|H|^2\right) t^{\sigma-1} \, d\theta dt.
		\endaligned
	\end{equation}
	Hence, it suffices to estimate the integrals involving $G$ and $H$ in (\ref{est_DrW_GH}).
	
	{\bf Estimate for the integral with $G_1$:} By the estimates for $|\nabla V_1|$ in (\ref{est_Vbd}) and (\ref{est_DVDio}), we have
	\begin{equation}\label{key}
		|G_1(\theta,t)| \le C \delta |\nabla_\theta  V_1(\theta,t)| \le C \delta \cdot 1^{1-\sigma}
		[\varkappa^{-1} (1+\varkappa t)^{-\ell}]^{\sigma}
	\end{equation}
	for any $0<\sigma<1$. It follows that
	\begin{align*}\label{key}
		\int_0^\infty \int_{\T^d} |G_1|^2 t^{\sigma-1} \,
		d\theta dt & \le C  \delta^2 \varkappa^{-2\sigma} \int_0^\infty \frac{dt}{t^{1-\sigma} (1+\varkappa t)^{2\ell\sigma}} \\
		& \le C\delta^2 \varkappa^{-3\sigma} \int_0^\infty \frac{dt}{t^{1-\sigma} (1+t)^{2\ell\sigma}}  \\
		& \le C \delta^2 \varkappa^{-3\sigma},
	\end{align*}
	where we can simply choose $\ell = 1$ to ensure the convergence of the integral in the right-hand side.
	
	{\bf Estimate for the integral with $G_2$:} 
	Note that an interpolation between (\ref{est_DrV}) and (\ref{est_DVDio}) implies
	\begin{equation}\label{est_V1_itp}
		|N^T_1 \nabla_\theta V_1(\theta,t)| + |\partial_t V_1(\theta,t)| \le
		C (1+t)^{\sigma-1} (1+\varkappa t)^{-\ell\sigma}.
	\end{equation}
	Also note that $|B_1(\theta,t)-B_2(\theta,t)| \le C t \delta$. It follows that
	\begin{align*}\label{key}
		\int_0^\infty \int_{\T^d} |G_2|^2 t^{\sigma-1} \, d\theta dt & \le C \delta^2 \int_0^\infty \frac{t^{1+\sigma} dt}{(1+t)^{2(1-\sigma)} (1+\varkappa t)^{2\ell\sigma}}  \\
		& \le C \delta^2 \varkappa^{-3\sigma},
	\end{align*}
	where we need to choose $\ell = 2$.
	
	{\bf Estimate for the integral with $H$:} Observe that
	\begin{align*}
		\int_0^\infty \int_{\T^d} |H|^2 t^{1+\sigma} \,
		d\theta dt & \le C  \delta^2 \int_0^\infty \int_{\T^d} 
		\left(|N^T_1\nabla_\theta V_1|^2 + |\partial_t V_1|^2\right) t^{1+\sigma}\, d\theta dt \\
		& \qquad + C \delta^2 \int_0^\infty \int_{\T^d} 
		\left(|N^T_1\nabla_\theta \nabla_\theta V_1|^2 + |\partial_t \nabla_\theta V_1|^2\right) 
		t^{1+\sigma}\, d\theta dt .
	\end{align*}
	The first term in the RHS is bounded by $\delta^2 \varkappa^{-3\sigma}$ by using (\ref{est_V1_itp}). 
	To handle the second integral, we apply the interpolation theorem between (\ref{est_DrdkV}) and (\ref{est_DVDio}) to obtain
	\begin{equation}\label{key}
		|N^T_1 \nabla_\theta \nabla_\theta V_1(\theta,t)|
		 + |\partial_t \nabla_\theta V_1(\theta,t)| \le C (1+t)^{-(1-\sigma)^2} (1+\varkappa t)^{-\ell\sigma}.
	\end{equation}
	Thus, the second term is bounded by
	\begin{equation}\label{key}
		C \delta^2 \int_0^\infty \frac{t^{1+\sigma} \, dt}{(1+t)^{2(1-2\sigma)} (1+\varkappa t)^{2\ell\sigma}}  \le
		C \delta^2 \varkappa^{-5\sigma},
	\end{equation}
	where we have chosen $\ell = 3$.
	
	By combining the estimates above with (\ref{est_DrW_GH}), we obtain 
	\begin{equation}\label{est_dW01}
		\int_0^1 \int_{\T^d} \left(|N^T_2 \nabla_\theta W|^2 
		+ |\partial_t W|^2\right)  d\theta dt \le C_\sigma \delta^2 \varkappa^{-5\sigma}.
	\end{equation}
	
	{\bf Step 3: Estimate for $\partial_t^2 W$.}
	
	Let $N_{2j}$ denote the $j$th column of $N_2$ and define $\nabla_{2j} = N_{2j}^T\cdot \nabla_\theta$
	for $1\le j\le d-1$. Note that $\nabla_{2j}$ is the $j$th component of $N_2^T\nabla_\theta$. Then we apply $\nabla_{2j}$ to (\ref{eq_tWGH}) and obtain
	\begin{align}\label{eq_djW}
		\begin{aligned}
			- \Dr{N^T_2 \nabla_\theta}{\partial_t} \cdot B_2
			\Dr{N^T_2 \nabla_\theta}{\partial_t} \nabla_{2j}W
			&= \Dr{N^T_2 \nabla_\theta}{\partial_t} \cdot \nabla_{2j}G + \nabla_{2j}H \\
			&\qquad +\Dr{N^T_2 \nabla_\theta}{\partial_t} \cdot \nabla_{2j}B_2
			\Dr{N^T_2 \nabla_\theta}{\partial_t} W，
		\end{aligned}
	\end{align}
	on $\T^d\times \R_+$ and $\nabla_{2j}W = 0$ on $\T^d\times \{0\}$. Let $\eta(t)$ 
	be a cut-off function such that $\eta(t) = 1$ for $t\in [0,1]$,
	$\eta(t) = 0$ for $t\in [2,\infty)$, 
	$0\le \eta(t)\le 1$ and $|\nabla \eta|\le C$. Now integrating (\ref{eq_djW}) against $ \eta^2\nabla_{2j}W$, we derive from integration by parts that
	\begin{align*}
		&\int_0^1 \int_{\T^d} \left(|N^T_2 \nabla_\theta \nabla_{2j} W|^2 
		+ |\partial_t \nabla_{2j} W|^2\right) d\theta dt \\
		&\qquad \le C \int_0^2 \int_{\T^d}
		\left(|\nabla_{2j}G|^2 + |\nabla_{2j}H|^2 +|N_2^T \nabla_\theta W|^2 +|\partial_t W|^2\right) d\theta dt\\
		&\qquad \le C \varkappa^{-5\sigma} \delta^2,
	\end{align*}
	where we have used the fact $|\nabla_{2j}W| \le |N_2^T\nabla_\theta W|$. 
	Consequently,
	\begin{equation}\label{est_ddW01}
		\int_0^1 \int_{\T^d} \left(|N^T_2 \nabla_\theta \otimes N^T_2 \nabla_\theta W |^2 
		+ |\partial_t N^T_2 \nabla_\theta W|^2\right) d\theta dt \le C \varkappa^{-5\sigma} \delta^2.
	\end{equation}
	
	Now observe that
	\begin{align*}
		&\Dr{N^T_2 \nabla_\theta}{\partial_t} \cdot B_2
		\Dr{N^T_2 \nabla_\theta}{\partial_t}W \\
		&= \Dr{N^T_2 \nabla_\theta}{\partial_t} B_2 \cdot
		\Dr{N^T_2 \nabla_\theta}{\partial_t}W + B_2: \bigg[\Dr{N^T_2 \nabla_\theta}{\partial_t} \otimes \Dr{N^T_2 \nabla_\theta}{\partial_t}\bigg] W \\
		& = \Dr{N^T_2 \nabla_\theta}{\partial_t} B_2 \cdot
		\Dr{N^T_2 \nabla_\theta}{\partial_t}W + B_2: \bigg[\begin{matrix}
			N^T_2\nabla_\theta \otimes N^T_2\nabla_\theta & N^T_2\nabla_\theta \partial_t \\
			(N^T_2\nabla_\theta \partial_t)^T & 0
		\end{matrix} \bigg] W + b_{2,dd} \partial_t^2 W,
	\end{align*}
	where $b_{2,dd} = (b_{2,dd}^{\alpha\beta})_{1\le \alpha,\beta\le m}$ is positive due to the strong ellipticity condition. This gives
	\begin{align*}
		b_{2,dd} \partial_t^2 W &= -\Dr{N^T_2 \nabla_\theta}{\partial_t} B_2 \cdot
		\Dr{N^T_2 \nabla_\theta}{\partial_t}W - B_2: \bigg[\begin{matrix}
			N^T_2\nabla_\theta \otimes N^T_2\nabla_\theta & N^T_2\nabla_\theta \partial_t \\
			(N^T_2\nabla_\theta \partial_t)^T & 0
		\end{matrix} \bigg] W \\
		& \qquad \qquad- \Dr{N^T_2 \nabla_\theta}{\partial_t}\cdot G - H.
	\end{align*}
	Note that $|(b_{2,dd})^{-1}| \le C$. Thus, it follows from (\ref{est_dW01}), (\ref{est_ddW01}) and the pointwise estimates of $G$ and $H$ for $t\in [0,1]$ that
	\begin{equation}\label{key}
		\int_0^1 \int_{\T^d} |\partial_t^2 W|^2 \, d\theta dt \le C \delta^2 \varkappa^{-5\sigma}.
	\end{equation}
	This completes the proof of  Theorem \ref{thm_MU}.
\end{proof}

\begin{proof}[\bf Proof of Theorem \ref{main-thm-01}]

	Note that $\partial\Omega$ is locally differential homeomorphic to $\R^{d-1}$. Thus, in view of Theorem \ref{thm_MU}, it suffices to prove the following claim: 
	Let $F\in L^1(\R^{d-1};\R^m)$ and $G\in L^p(\R^{d-1})$ for some $1<p<\infty$.
	Suppose that for  
	a.e.  $x\in \R^{d-1}$,
	\begin{equation}\label{est_fxyg}
		|F(x) - F(y)| \le  |x-y| |G(x)|, \qquad \txt{for a.e. } y \in \R^{d-1}.
	\end{equation} 
	Then
	\begin{equation}\label{est_dfg}
		\left(\int_{\R^{d-1}} |\nabla F| ^p\right)^{1/p}
		 \le C \left(\int_{\R^{d-1}} |G|^p\right)^{1/p},
	\end{equation}
	where $C$ depends only on $d$ and $p$.
	Indeed, if the claim holds, then it follows from Theorem \ref{thm_MU} that
	\begin{equation}\label{key}
		\left(\int_{\partial\Omega} |\nabla_{\txt{tan}} \overline{f} |^p\right)^{1/p}
		 \le C   \left(\int_{\T^d} \| f (\cdot, y)\|^2_{C^1(\partial\Omega)}
		 d y\right)^{1/2}
		 \left(\int_{\partial\Omega} \left[ \varkappa (n(x) )\right]^{-\sigma p}\, dx \right)^{1/p}
	\end{equation}
	for any $0<\sigma<1$. Recall that $[\varkappa(n(x))]^{-1} \in L^q(\partial\Omega)$ for any $q<d-1$. Thus, for any $p<\infty$, we choose $\sigma\in (0,1)$ so small that $\sigma p<d-1$. As a result, we obtain 
	\begin{equation}\label{key}
		\left(\int_{\partial\Omega} |\nabla_{\txt{tan}} \overline{ f}|^p\right)^{1/p}
		 \le C  \left(\int_{\T^d} \| f (\cdot, y)\|^2_{C^1(\partial\Omega)}
		 d y\right)^{1/2}
	\end{equation}
	for any $p<\infty$. Note that $\overline{f}$ is bounded. 
	We may conclude that $\overline{f} \in W^{1,p}(\partial\Omega;\R^m)$ and (\ref{main-estimate-01}) holds.
	
	It remains to prove the claim. Let $\varphi\in C_0^\infty(B(0,1))$ and $\int_{\R^{d-1}} 
	\varphi = 1$. 
	Set $\varphi_\e(x) = \e^{1-d} \varphi(x/\e)$. Define for any $\e > 0$,
	\begin{equation}\label{key}
		F_\e(x) = \int_{\R^{d-1}} F(y) \varphi_\e(x-y) \, dy.
	\end{equation}
	Clearly, $F_\e$ is smooth and $F_\e \to F$ 
	in $L^1(\R^{d-1};\R^m)$ as $\e \to 0$. 
	Moreover, for any $z\in B(x,\e)$,
	\begin{align*}\label{key}
		\nabla F_\e(x) & = \int_{\R^{d-1}} F(y) \nabla \varphi_\e(x-y) \, dy \\
		& = \int_{\R^{d-1}} ( F(y) - F(z)) \nabla \varphi_\e(x-y) \, dy.
	\end{align*}
	Using the assumption (\ref{est_fxyg}),
	\begin{align*}
		|\nabla F_\e(x)| & \le  \fint_{ B(x,\e)}  |G(z)| \int_{B(x,\e)} |y-z| |\nabla \varphi_\e(x-y)| \, dydz \\
		& \le C  \fint_{B(x,\e)} |G(z)| \, dz\\
		&\le C  \left(\fint_{B(x, \e)} |G(z)|^p\, dz\right)^{1/p}.
	\end{align*}
	Thus, by Fubini's Theorem, for any $\e>0$
	\begin{equation}\label{key-100}
		\left(\int_{\R^{d-1}} |\nabla F_\e(x)|^p \, dx\right)^{1/p} 
		\le C  \left(\int_{\R^{d-1}} |G(z)|^p \, dz\right)^{1/p}.
	\end{equation}
	Since $\nabla F_\e \to \nabla F$ in the sense of distribution as $\e\to 0$,
	 (\ref{est_dfg}) follows
	from (\ref{key-100}).
\end{proof}



\section{Regularity for Neumann problems}

As in the case of Dirichlet problems, to establish  the regularity of $\overline{g}_{ij}$,
 we use an explicit formula for $\overline{g}_{ij}$ previously discovered in \cite{ShenZhuge16}.
It involves a family of Neumann problems in the half-spaces:

\begin{equation}\label{eq_usN}
	\left\{
	\begin{aligned}
		\cL^*_1 u^s &= 0 &\quad & \txt{in } \bH^d_n(s), \\
		n\cdot A^*\nabla u^s &= T \cdot \nabla \phi &\quad & \txt{on } \partial\bH^d_n(s),
	\end{aligned}
	\right.
\end{equation}
where $T$ is a constant tangential vector, i.e., $T\cdot n = 0$, with $|T|\le 1$.
We assume that $\phi\in C^\infty(\T^d;\R^m)$.

As far as we know, for arbitrary  $n\in \bS^{d-1}$, the solvability of (\ref{eq_usN}) is not clear. 
But for $n\in \bS^{d-1}_D$, it was shown in \cite{ShenZhuge16} 
that (\ref{eq_usN}) is solvable by lifting the problem to a $(d+1)$-dimensional  system
 in the upper half-space, in a manner similar to the case of Dirichlet condition. 
 More precisely, we seek a solution in the form of
\begin{equation}\label{eq_usU}
	u^s(x) = U(x-(x\cdot n+s) n, -(x\cdot n+s)),
\end{equation}
where $U$  is a solution of the Neumann problem:
\begin{equation}\label{eq_U}
	\left\{
	\begin{aligned}
		-\Dr{N^T \nabla_\theta}{\partial_t}
		\cdot B \Dr{N^T \nabla_\theta}{\partial_t} U
		&= 0 &\quad & \txt{ in } \T^d\times \R_+, \\
		-e_{d+1}\cdot B \Dr{N^T \nabla_\theta}{\partial_t} U &= T \cdot \nabla_\theta \phi
		&\quad & \txt{ on }  \T^d\times \{0\},
	\end{aligned}
	\right.
\end{equation}
with $B(\theta,t) = M^T A^*(\theta-tn) M$ and $M = (N,-n)$ being an orthogonal matrix.
The solvability of (\ref{eq_U}) and related estimates are addressed below.

\begin{lemma}\cite[Proposition 3.6]{ShenZhuge16}\label{lem_Uexist}
	Suppose that $n$ satisfies the Diophantine condition with constant $\varkappa>0$. Then 
	the Neumann problem (\ref{eq_U}) has a smooth solution $U$, and the solution is unique, up to a constant
	under the condition that $U\in L^\infty(\T^d\times \mathbb{R}_+)$,
	$\nabla_\theta U \in L^2(\T^d\times \mathbb{R}_+)$ and $\partial_t U \in L^2(\T^d\times \mathbb{R}_+)$.
	Moreover, the solution 
	satisfies
	\begin{equation}\label{est_DUSob}
		\int_0^\infty \int_{\T^d} 
		\Big\{
		|N^T\nabla_\theta \partial_\theta^\alpha \partial_t^j U|^2 + |\partial_\theta^\alpha \partial_t^{1+j} U|^2
		\Big\} \, d\theta dt \le C \norm{\phi}^2_{C^{|\alpha|+j+1}(\T^d)},
	\end{equation}
	for any $|\alpha|$, $j \ge 0$, 
	where $C$ depends only on $d$, $m$, $|\alpha|$, $j$, and $A$. 
	Furthermore, there exists a constant vector $U_\infty$ such that for any $|\alpha|$, $j$, $\ell \ge 0$,
	\begin{equation}\label{est_DUptwise}
		|N^T\nabla_\theta \partial_\theta^\alpha \partial_t^j U|+ |\partial_\theta^\alpha \partial_t^{1+j} U| + \varkappa|\partial_\theta^\alpha (U - U_\infty)| \le  \frac{ C_\ell \norm{\phi}_{C^k(\T^d)}}{(1+\varkappa t)^\ell} ,
	\end{equation}
	where $k = k(|\alpha|,j,\ell,d)$ and $C_\ell$ depends only on $d$, $m$, $|\alpha|$, $j$, $\ell$, and $A$.
\end{lemma}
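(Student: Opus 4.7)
The plan is to follow the blueprint of the Dirichlet analog in Lemma \ref{lemma-DKV}, adapting each step to the Neumann data. First I would set up the variational formulation on $\T^d\times\R_+$, working on the Hilbert space obtained by completing smooth $\R^m$-valued functions modulo constants under the seminorm $\|N^T\nabla_\theta U\|_{L^2}+\|\partial_t U\|_{L^2}$. Ellipticity of $B=M^TA^*M$ makes the associated bilinear form coercive on the quotient, and the boundary datum $T\cdot\nabla_\theta\phi$ has mean zero on $\T^d$ so the Neumann compatibility condition is automatic. Lax--Milgram (preceded if necessary by a Galerkin truncation in $t$ followed by passage to the limit, to handle the unbounded half-line) produces a weak solution, and interior regularity together with oblique boundary regularity at $t=0$ then upgrade $U$ to $C^\infty$.

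For uniqueness modulo constants I would test the homogeneous equation against a difference $W$ satisfying the stated integrability: ellipticity forces $N^T\nabla_\theta W\equiv 0$ and $\partial_t W\equiv 0$, so $W=W(\theta)$ with $N^T\nabla_\theta W\equiv 0$ on $\T^d$. Fourier expansion combined with the Diophantine lower bound $|N^T\xi|\ge \varkappa|\xi|^{-2}$ for $\xi\ne 0$ forces every nonzero mode of $W$ to vanish, so $W$ is a constant. The asymptotic constant $U_\infty$ is identified by the same Fourier reasoning applied to the $\xi=0$ mode of $U(\cdot,t)$ as $t\to\infty$. The energy bound (\ref{est_DUSob}) then comes from testing the weak formulation against $U-U_\infty$ and iterating after differentiating in $\theta$ and $t$: tangential derivatives commute with the operator up to commutators with the $\theta$-dependence of $B$ that are absorbed by induction, while second-order normal derivatives are recovered algebraically from the equation using that the coefficient $n^TA^*(\theta-tn)n$ of $\partial_t^2$ is uniformly bounded below by $\lambda$.

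The main obstacle is the pointwise decay (\ref{est_DUptwise}). The plan is a weighted energy estimate: test the equation against $(1+\varkappa t)^{2\ell}(U-U_\infty)$ and integrate by parts. The weight produces an extra term of order $\varkappa(1+\varkappa t)^{2\ell-1}(U-U_\infty)\partial_t U$; controlling it requires inverting $N^T\nabla_\theta$ on the nonzero-mode part of $U-U_\infty$, which by the Diophantine estimate costs only $\varkappa^{-1}$ at the price of a fixed loss of $\theta$-derivatives. Iterating $\ell$ times yields
$$\int_L^\infty \int_{\T^d} (1+\varkappa t)^{2\ell}\bigl(|N^T\nabla_\theta U|^2+|\partial_t U|^2\bigr)\,d\theta\, dt\le C_\ell \|\phi\|_{C^k}^2,$$
and Sobolev embedding on slabs $\T^d\times[L,L+1]$ delivers the pointwise bound on derivatives; the analogous bounds on $\partial_\theta^\alpha\partial_t^j U$ follow by first differentiating the equation and running the same scheme. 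The estimate on $\varkappa|\partial_\theta^\alpha(U-U_\infty)|$ is then obtained by integrating $\partial_t U$ from $t$ to $\infty$ and applying Diophantine inversion one more time to the zero-mean part. The hardest technical point will be the bookkeeping: each iteration costs a fixed number of $\theta$-derivatives of $\phi$, and one must ensure that after $\ell$ iterations the total loss is still captured by a finite-order $C^k$-norm with $k=k(|\alpha|,j,\ell,d)$.
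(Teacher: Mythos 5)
The paper proves this lemma only by citation to \cite[Proposition 3.6]{ShenZhuge16}, so there is no in-paper argument to compare against; your reconstruction (variational formulation on the quotient by constants, higher-order $L^2$ bounds by $\theta$-differentiation, weighted energy estimates plus Diophantine inversion for the decay) is the natural route and is broadly in the spirit of that reference and of \cite{GerMas11,GerMas12}. Two steps need repair.

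First, ``interior regularity together with oblique boundary regularity at $t=0$'' cannot be invoked as stated. The lifted operator $\Dr{N^T\nabla_\theta}{\partial_t}\cdot B\Dr{N^T\nabla_\theta}{\partial_t}$ is degenerate: it is elliptic only in the $d$ directions $(N^T\nabla_\theta,\partial_t)$ of the $(d+1)$-dimensional cylinder $\T^d\times\R_+$ (the symbol vanishes for $\xi$ parallel to $n$), so classical interior/boundary elliptic regularity does not give smoothness. What actually gives it is exactly the other mechanism you describe: differentiate the equation in $\theta$ (which commutes with $N^T\nabla_\theta$ and $\partial_t$ and produces only commutators with $B$), establish (\ref{est_DUSob}) for all $|\alpha|,j$ by induction, recover $\partial_t^2U$ algebraically from the equation, and combine the Diophantine inversion with slab-wise Sobolev embedding to recover full $\theta$-regularity. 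The elliptic-regularity step should simply be dropped.

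Second, and more substantively, there is a genuine gap in the weighted energy estimate concerning the zero Fourier mode. Testing against $(1+\varkappa t)^{2\ell}(U-U_\infty)$ produces the mixed term
\begin{equation*}
2\ell\varkappa\int_0^\infty\int_{\T^d}(1+\varkappa t)^{2\ell-1}\,e_{d+1}\cdot B\Dr{N^T\nabla_\theta U}{\partial_t U}\cdot(U-U_\infty)\,d\theta\,dt.
\end{equation*}
Your Diophantine inversion controls the oscillating part $\tilde U=U-\fint_{\T^d}U\,d\theta$, but it says nothing about the spatial mean $\bar U(t)-U_\infty$; on that piece, Cauchy--Schwarz plus a Hardy inequality produces $C\int\!\!\int(1+\varkappa t)^{2\ell}|\partial_t U|^2$ with a constant that is $O(1)$, not small, so it cannot be absorbed into the left-hand side. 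The missing ingredient is a conservation law: integrating the PDE over $\T^d$ and using that $T\cdot\nabla_\theta\phi$ has zero mean yields
\begin{equation*}
\int_{\T^d}e_{d+1}\cdot B\Dr{N^T\nabla_\theta U}{\partial_t U}\,d\theta = 0\qquad\text{for every }t>0,
\end{equation*}
and since $\bar U(t)-U_\infty$ is $\theta$-independent, its contribution to the mixed term vanishes identically. Only with this observation does the iteration close, with the fixed loss of $\theta$-derivatives per level of $\ell$ that you anticipate; the same identity is also what lets you convert the decay of $N^T\nabla_\theta U$ and $\partial_t\tilde U$ into decay of $\partial_t\bar U$, and hence of $\bar U-U_\infty$, by integrating from $t$ to $\infty$.
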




\begin{remark}
	Lemma \ref{lem_Uexist} gives the existence of solutions to (\ref{eq_usN}) 
	for  $s\in \R$ and $n\in \bS^{d-1}_D$
	 via (\ref{eq_usU}). Moreover, by the (large-scale) uniform boundary Lispchitz estimates 
	for Neumann conditions \cite{KLS13, AS}, 
	the solution satisfying the sublinear growth as $x\cdot n \to -\infty$ is unique up to a constant.
\end{remark}

Recall that $\bS^{d-1}_D$ has full surface measure of $\bS^{d-1}$. 
An expression for $\overline{g}_{ij}$ 
defined a.e. on $\bS^{d-1}$ is formulated in \cite{ShenZhuge16} and summarized below.

\begin{theorem}\label{thm-N-formula}
	Let  $g=\{ g_{ij}\}$,
	where $g_{ij} \in C^\infty(\partial\Omega\times \T^d; \R^{m}) $. 
	Then, for any $x\in \partial\Omega$ with $n=n(x)\in \bS^{d-1}_D$,
	\begin{equation}\label{eq_tau}
		\overline{g}_{jk}^\gamma(x) 
		= n_i \widehat{a}_{ji}^{\alpha\gamma} h^{\alpha\beta} T_{\ell r} 
		\cdot \int_{\T^d} \Big[ e_k \delta^{\nu\beta} 
		+ \nabla_\theta \chi_k^{*\nu\beta}(\theta) 
		+ \nabla_{\theta} U_{n,k}^{\nu\beta}(\theta,0) \Big] g_{\ell r}^{\nu}(x, \theta) \, d\theta,
	\end{equation}
	where $(h^{\alpha\beta})$ denotes the inverse of
	the $m\times m$ matrix  $(\widehat{a}^{*\alpha\beta}_{ij} n_i n_j)$ and $U_{n,k}^{\beta}$ is the solution of
	\begin{equation}\label{eq_Un}
		\left\{
		\begin{aligned}
			-\Dr{N^T \nabla_\theta}{\partial_t}
			\cdot B_n \Dr{N^T \nabla_\theta}{\partial_t} U^{\beta}_{n,k}
			&= 0 &\quad & \emph{ in } \T^d\times (0,\infty), \\
			-e_{d+1}\cdot B_n \Dr{N^T \nabla_\theta}{\partial_t} U^{\beta}_{n,k} 
			&= \frac{1}{2}T_{ij} \cdot \nabla_\theta \phi_{ij,k}^\beta &\quad & \emph{ on }  \T^d\times \{0\},
		\end{aligned}
		\right.
	\end{equation}
	where $T_{ij} = n_i e_j - n_j e_i$, $B_n (\theta, t)=M^TA^*(\theta-tn)M$, and $\phi_{ij,k}^\beta = 
	(\phi_{ij,k}^{1\beta},\phi_{ij,k}^{2\beta},\cdots,\phi_{ij,k}^{m\beta})$ are the 1-periodic smooth functions satisfying
	\begin{equation}\label{eq_fijk}
		\frac{\partial}{\partial \theta_i}\big\{ \phi^{\alpha\beta}_{ij,k} \big\} = a^{*\alpha\beta}_{jk} + a^{*\alpha\gamma}_{j\ell} \frac{\partial}{\partial \theta_\ell} \chi^{*\gamma\beta}_k - \widehat{a}^{*\alpha\beta}_{jk} \quad \txt{and} \quad \phi^{\alpha\beta}_{ij,k} = - \phi^{\alpha\beta}_{ji,k}.
	\end{equation}
\end{theorem}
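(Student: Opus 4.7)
The plan is to derive the formula (\ref{eq_tau}) by combining the standard two-scale expansion for (\ref{NP-0}) with a boundary layer corrector, and then to read off $\overline{g}_{jk}^\gamma$ from the leading-order contribution to the homogenized conormal derivative. I would begin with the ansatz $u_\e \approx u_0 + \e\chi_k^*(x/\e)\partial_k u_0 + \e W_\e$, where $W_\e$ is a boundary corrector designed to absorb the oscillations produced both by the Neumann data $T_{ij}\cdot\nabla_x g_{ij}(x,x/\e)$ and by the boundary trace of the interior corrector term $\e\chi_k^*(x/\e)\partial_k u_0$.

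First, I would use the antisymmetric potential $\phi_{ij,k}^{\alpha\beta}$ from (\ref{eq_fijk}) to rewrite the oscillatory part of the interior flux as a tangential divergence. Concretely, the identity $a_{jk}^{*\alpha\beta}+a_{j\ell}^{*\alpha\gamma}\partial_\ell\chi_k^{*\gamma\beta}-\widehat{a}_{jk}^{*\alpha\beta}=\partial_i\phi_{ij,k}^{\alpha\beta}$, combined with the antisymmetry $\phi_{ij,k}^{\alpha\beta}=-\phi_{ji,k}^{\alpha\beta}$, gives $n_i\partial_i\phi_{ij,k}^{\alpha\beta}=\tfrac{1}{2}T_{ij}\cdot\nabla\phi_{ij,k}^{\alpha\beta}$ after antisymmetrizing in $i\leftrightarrow j$, where $T_{ij}=n_ie_j-n_je_i$ is tangential. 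This is the step that casts the Neumann correction into tangential-derivative form and forces the right-hand side of (\ref{eq_Un}) to equal $\tfrac{1}{2}T_{ij}\cdot\nabla_\theta\phi_{ij,k}^\beta$.

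Next, localizing near a boundary point $x\in\partial\Omega$ with $n=n(x)\in\bS^{d-1}_D$ and freezing the coefficients and the geometry, the problem for $W_\e$ reduces to a half-space Neumann problem of the type (\ref{eq_usN}). Via the lifting $(\theta,t)=(x-(x\cdot n)n,-x\cdot n)$ and the relation (\ref{eq_usU}), this converts to the $(d+1)$-dimensional degenerate elliptic system (\ref{eq_Un}) for $U^\beta_{n,k}$, and Lemma \ref{lem_Uexist} supplies a smooth bounded solution with a well-defined BLT $U_\infty$ at $t=\infty$ together with the decay (\ref{est_DUptwise}). Imposing solvability in the half-space cross-section then yields an $m\times m$ linear system for the effective conormal condition whose matrix is $(\widehat{a}_{ij}^{*\alpha\beta}n_in_j)$; inverting through $h=(h^{\alpha\beta})$ and contracting against $n_i\widehat{a}_{ji}^{\alpha\gamma}$ and $T_{\ell r}$ produces the bracket $[e_k\delta^{\nu\beta}+\nabla_\theta\chi_k^{*\nu\beta}+\nabla_\theta U_{n,k}^{\nu\beta}(\theta,0)]$ integrated against $g_{\ell r}^\nu(x,\theta)$ over $\T^d$, which is exactly (\ref{eq_tau}).

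The main obstacle I anticipate is justifying why the $U^\beta_{n,k}$ contribution enters through $\nabla_\theta U^\beta_{n,k}(\theta,0)$ at $t=0$ rather than through the far-field value $U_\infty$ at $t=\infty$. The identification proceeds by integrating the equation in (\ref{eq_Un}) against $\phi_{ij,k}^\beta$ over $\T^d\times(0,\infty)$: the pointwise decay (\ref{est_DUptwise}) annihilates the boundary term at $t=\infty$, and the integration by parts transports the ``BLT contribution'' back to a tangential-gradient integral at $t=0$, where it couples to $g_{\ell r}^\nu(x,\cdot)$ through the data in (\ref{eq_Un}). A secondary subtlety is that the entire construction rests on $n\in\bS^{d-1}_D$, since otherwise Lemma \ref{lem_Uexist} is unavailable; but since $\bS^{d-1}_D$ has full surface measure, this suffices to define $\overline{g}_{jk}^\gamma$ almost everywhere on $\partial\Omega$, as claimed in the statement.
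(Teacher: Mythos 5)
Note first that the paper itself does not prove Theorem \ref{thm-N-formula}: the sentence immediately preceding it (``An expression for $\overline{g}_{ij}$ defined a.e.\ on $\bS^{d-1}$ is formulated in \cite{ShenZhuge16} and summarized below'') shows it is simply cited from the authors' earlier work, exactly as Theorem \ref{thm_mu} is cited from \cite{AKMP16}. So there is no in-paper proof to compare against; what follows is an assessment of your reconstruction on its own terms.

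Your outline contains a structural inconsistency. The two-scale ansatz for a solution of $\mathcal{L}_\e u_\e=0$ should read $u_\e\approx u_0+\e\chi_k(x/\e)\partial_k u_0+\cdots$ with the \emph{direct} correctors $\chi$, not the adjoint correctors $\chi^*$ as you wrote. The adjoint objects appearing throughout (\ref{eq_tau}) --- $A^*$, $\chi^*$, the half-space operator $\mathcal{L}_1^*$ in (\ref{eq_usN}), and $B_n=M^TA^*(\cdot)M$ in (\ref{eq_Un}) --- do not come from the ansatz for $u_\e$. They enter through a duality computation in which one pairs the boundary layer against the Neumann function of the adjoint half-space operator, in analogy with the treatment of the Dirichlet formula in \cite{AKMP16}. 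As a consequence, a boundary corrector inserted into the expansion of $u_\e$ would solve a half-space problem for $\mathcal{L}_1$, not for $\mathcal{L}_1^*$ as in (\ref{eq_usN}), so it cannot be directly identified with the $u^s$ that the lifted function $U^\beta_{n,k}$ encodes.

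Second, the two steps that actually produce the formula are asserted but not derived. You claim that ``imposing solvability in the half-space cross-section'' yields the matrix $(\widehat{a}_{ij}^{*\alpha\beta}n_in_j)$ whose inverse is $h$; but a compatibility condition for a half-space Neumann problem does not, on its own, single out the normal--normal block of the \emph{adjoint} homogenized matrix --- that factor arises from the far-field behavior of the adjoint Neumann function. Likewise, the integration-by-parts step you propose (pairing (\ref{eq_Un}) against $\phi^\beta_{ij,k}$) is offered only heuristically, and it is not shown that it produces the specific kernel $n_i\widehat{a}_{ji}^{\alpha\gamma}h^{\alpha\beta}T_{\ell r}\cdot\big[e_k\delta^{\nu\beta}+\nabla_\theta\chi_k^{*\nu\beta}+\nabla_\theta U_{n,k}^{\nu\beta}(\cdot,0)\big]$. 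The one step you have genuinely right is the algebraic reduction: using $\phi_{ij,k}=-\phi_{ji,k}$ to rewrite the normal-flux oscillation $n_j\partial_i\phi^{\alpha\beta}_{ij,k}$ as $-\tfrac12 T_{ij}\cdot\nabla\phi_{ij,k}^{\alpha\beta}$, which is exactly how the tangential Neumann data in (\ref{eq_Un}) comes about. The remainder of the sketch correctly names the auxiliary objects ($\phi_{ij,k}$, $U_{n,k}$, the lifting (\ref{eq_usU}), the Diophantine restriction) but does not actually establish the identity (\ref{eq_tau}); that derivation lives in \cite{ShenZhuge16}, not here.
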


We point out that the functions $\phi_{ij,k}^\beta$, which are completely determined by $A$, 
are smooth as long as $A$ is. The equations (\ref{eq_fijk}) for $\phi_{ij,k}^\beta$  
 will not be used in this paper.

\begin{theorem}\label{thm_tauC}
Fix $\sigma \in (0, 1)$.
Let $x, y\in \partial\Omega$ and $|x-y|\le c_0$.
Suppose that $n(x)$, $n(y)\in\bS^{d-1}_D$.
Then  
	\begin{equation}\label{est_tau12}
		| \overline{g}(x) - \overline{g}(y)| \le C_\sigma \varkappa^{-\sigma} |x-y| 
		\left(\int_{\T^d} \| g(\cdot, y)\|_{C^1(\partial\Omega)}^2\, dy \right)^{1/2},
	\end{equation}
	where $\varkappa = \max \big\{\varkappa(n(x)), \varkappa(n(y)) \big\}$ and 
	$C_\sigma$ depends only on $d$, $m$, $\sigma$, $\lambda$, and $\| A\|_{C^k(\T^d)}$ for some
	$k=k(d, \sigma)\ge 1$.
\end{theorem}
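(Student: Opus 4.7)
The plan is to mirror the proof of Theorem \ref{thm_MU}, adapting each step to the Neumann structure of the lifted problem (\ref{eq_U}). Fix $n_1, n_2 \in \bS^{d-1}_D$, let $\delta := |n_1 - n_2|$, and choose orthogonal matrices $M_j = (N_j, -n_j)$ with $|N_1 - N_2| \le C\delta$; let $U_j$ denote the corresponding solution of (\ref{eq_Un}). By the representation formula (\ref{eq_tau}), combined with $|T_{\ell r, 1} - T_{\ell r, 2}| \le C\delta$ and the pointwise bound $|\nabla_\theta U_j(\cdot, 0)| \le C\varkappa^{-1}$ (obtained from (\ref{est_DUptwise}) with $|\alpha|=1$ at $t=0$), the estimate (\ref{est_tau12}) reduces to proving
\begin{equation*}
\int_{\T^d}\big|\nabla_\theta W(\theta, 0)\big|^2\, d\theta \le C_\sigma \varkappa^{-2\sigma} \delta^2,\qquad W := U_1 - U_2.
\end{equation*}
The standard trace inequality $\int_{\T^d}|\nabla_\theta W(\cdot,0)|^2 \le 2\int_0^1\int_{\T^d}(|\nabla_\theta W|^2 + |\partial_t \nabla_\theta W|^2)\, d\theta dt$ then reduces the task to $L^2$ bounds on the unit slab.

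As a preparatory step, Neumann analogs of Lemma \ref{lem_V} and of the bound (\ref{est_dkVbd}) are established for $U_j$: via the lift $u^s(x) = U_j(x - (x\cdot n+s)n, -(x\cdot n+s))$, the uniform boundary Lipschitz estimates for Neumann problems in half-spaces \cite{KLS13, AS}, and a Miranda--Agmon-type maximum principle, one derives the decay bounds $|N^T \nabla_\theta \partial_\theta^\alpha \partial_t^j U| + |\partial_\theta^\alpha \partial_t^{1+j} U| \le C_\sigma (1+t)^{-(1-\sigma)} \|\phi\|_{C^k}$ together with $|\partial_\theta^\alpha U| \le C\varkappa^{-\sigma} \|\phi\|_{C^k}$ for $|\alpha|\ge 2$. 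Next, $W$ satisfies the same interior equation (\ref{eq_tWGH}) as in the Dirichlet case (with $V_1$ replaced by $U_1$), so $G_1, G_2, H$ take the identical form, while the Neumann boundary condition reads
\begin{equation*}
-e_{d+1} \cdot B_2 \Dr{N_2^T \nabla_\theta}{\partial_t} W = \tfrac12 (T_{ij,1} - T_{ij,2}) \cdot \nabla_\theta \phi_{ij,k}^\beta - e_{d+1} \cdot G\big|_{t=0},
\end{equation*}
whose right-hand side is pointwise $O(\delta)$. Invoking the Neumann analog of Lemma \ref{lem_A1est} from \cite{ShenZhuge16} (which accommodates a nonzero boundary datum) together with these decay bounds and the interpolation recipe of Steps 2--3 in the Dirichlet proof yields
\begin{equation*}
\int_0^1\int_{\T^d} \big(|N_2^T \nabla_\theta W|^2 + |\partial_t W|^2\big)\, d\theta dt \le C \delta^2 \varkappa^{-c\sigma}.
\end{equation*}
Applying the tangential derivative $\nabla_{2\ell}$ to the system and repeating gives the analogous bound for $|N_2^T\nabla_\theta \otimes N_2^T\nabla_\theta W|$ and $|\partial_t N_2^T\nabla_\theta W|$.

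The principal new obstacle, absent in the Dirichlet setting, is that the energy estimate controls only the $n$-tangential component $N_2^T \nabla_\theta W$, whereas the target bound requires the full $\nabla_\theta W$. The missing direction $n_2 \cdot \nabla_\theta W$ is recovered from the identity
\begin{equation*}
n \cdot \nabla_\theta U(\theta, t) = -\partial_t U(\theta, t) - \partial_s u^{-\theta \cdot n}(\theta - tn),
\end{equation*}
obtained by differentiating the lift formula in $\theta$. Writing this for $U_1$ and $U_2$ and subtracting expresses
\begin{equation*}
n_2 \cdot \nabla_\theta W = (n_2 - n_1) \cdot \nabla_\theta U_1 - \partial_t W - \big[\partial_s u_1^{-\theta \cdot n_1}(\theta - tn_1) - \partial_s u_2^{-\theta \cdot n_2}(\theta - tn_2)\big];
\end{equation*}
the first summand is $O(\delta \varkappa^{-1})$ pointwise, the second is already controlled in $L^2$, and the third, comparing the parameter-derivatives of Neumann solutions in neighboring half-spaces, is controlled in $L^2$ by applying the boundary Lipschitz estimates of \cite{KLS13, AS} to compare $u_j^s$ and $u_j^{s'}$ (yielding a Neumann analog of (\ref{e-10})) and then exploiting continuity-in-$n$ of the half-space Neumann problem. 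The technical heart of the argument is precisely this last piece: establishing the Lipschitz dependence of $\partial_s u^s$ on the pair $(s, n)$, uniformly across the family of half-spaces. A further tangential-in-$\theta$ differentiation then controls $\partial_t \nabla_\theta W$ in $L^2$ by the same bound, completing the proof.
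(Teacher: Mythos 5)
Your reduction misses a structural simplification that the paper exploits, and the route you propose for filling the resulting (self‑created) gap is circular. In the representation formula (\ref{eq_tau}) the trace $\nabla_\theta U_{n,k}(\theta,0)$ only ever appears paired with the tangential vector $T_{\ell r}=n_\ell e_r-n_r e_\ell$, which satisfies $T_{\ell r}\cdot n=0$. Writing $T_{1,\ell r}\cdot\nabla_\theta U_1-T_{2,\ell r}\cdot\nabla_\theta U_2=(T_{1,\ell r}-T_{2,\ell r})\cdot\nabla_\theta U_1+T_{2,\ell r}\cdot\nabla_\theta W$, the first term is $O(\delta\varkappa^{-\sigma})$ by Lemma \ref{lem_dtheU} with $|\alpha|=1$ (your pointwise bound $C\varkappa^{-1}$ extracted from (\ref{est_DUptwise}) is too weak to reach the claimed exponent $\varkappa^{-\sigma}$ for small $\sigma$), and the second term obeys $|T_{2,\ell r}\cdot\nabla_\theta W|\le|N_2^T\nabla_\theta W|$ because $T_{2,\ell r}$ lies in the column span of $N_2$. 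So the normal component $n_2\cdot\nabla_\theta W$ never enters: the only trace quantity needed is $N_2^T\nabla_\theta W(\cdot,0)$, exactly what the lifted energy estimates control, and the paper's proof proceeds with no analogue of your ``principal new obstacle.''

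The recovery scheme you propose for $n_2\cdot\nabla_\theta W$ does not close even if it were needed. Your identity $n\cdot\nabla_\theta U=-\partial_t U-\partial_s u^{-\theta\cdot n}$ is correct, but unwinding $\partial_s u_\ell^s$ via the lift $u_\ell^s(x)=U_\ell(x-(x\cdot n_\ell+s)n_\ell,-(x\cdot n_\ell+s))$ gives $\partial_s u_\ell^s=-n_\ell\cdot\nabla_\theta U_\ell-\partial_t U_\ell$, hence $\partial_s u_1-\partial_s u_2=-(n_1-n_2)\cdot\nabla_\theta U_1-n_2\cdot\nabla_\theta W-\partial_t W$; substituting this into your decomposition of $n_2\cdot\nabla_\theta W$ returns the tautology $n_2\cdot\nabla_\theta W=n_2\cdot\nabla_\theta W$. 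The appeal to ``continuity-in-$n$ of the half-space Neumann problem'' to control $\partial_s u_1-\partial_s u_2$ is therefore precisely the statement to be proved. Dropping this detour and invoking tangentiality of $T_{2,\ell r}$, the rest of your outline --- the lifted decay estimates, the splitting of $W$ into pieces handled by the weighted and local energy lemmas (Lemmas \ref{lem_A1est_N} and \ref{lem_Loc_N}, the latter needed because the boundary datum of $W$ is not of the matching form $e_{d+1}\cdot G$), and the tangential differentiation to control $\partial_t N_2^T\nabla_\theta W$ --- does line up with the paper's argument.
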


To prove Theorem \ref{thm_tauC}, the following two lemmas will be crucial.

\begin{lemma}\label{lem_U}
	Let $n\in \bS^{d-1}_D$ and $U$ be a solution of (\ref{eq_U}) corresponding to $n$. Then
	\begin{equation}\label{est_DrU}
		|N^t\nabla_\theta U| + |\partial_t U| \le \frac{C \norm{\phi}_{C^k(\T^d)}}{1+t},
	\end{equation}
	where $k>d/2+1$ and $C$ depends only on $d,m$ and $A$. Moreover, for any $0<\sigma<1$,
	\begin{equation}\label{est_DrdkU}
		|N^t\nabla_\theta \partial_\theta^\alpha \partial_t^j U| + |\partial_\theta^\alpha \partial_t^{1+j} U| \le
		 \frac{ C_\sigma \norm{\phi}_{C^k(\T^d)}}{(1+t)^{1-\sigma}},
	\end{equation}
	where $k = k(|\alpha|,j,\sigma,d)$ and $C_\sigma$ depends only on $d$, $m$, $|\alpha|$, $j$, $\sigma$ and $A$.
\end{lemma}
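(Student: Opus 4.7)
The plan is to mirror the Dirichlet argument of Lemma \ref{lem_V}. The lifting formula (\ref{eq_usU}), together with $N^T n = 0$, gives the identity $U(\theta,t) = u^{-\theta\cdot n}(\theta - tn)$. Differentiating and using $N^T\nabla_\theta(\theta\cdot n) = 0$, we obtain
\begin{align*}
N^T \nabla_\theta U(\theta,t) &= N^T \nabla u^{-\theta\cdot n}(\theta - tn),\\
\partial_t U(\theta,t) &= -n\cdot \nabla u^{-\theta\cdot n}(\theta - tn),
\end{align*}
where $u^s$ solves the half-space Neumann problem (\ref{eq_usN}). Since the point $\theta - tn$ lies at distance $t$ from $\partial\bH^d_n(-\theta\cdot n)$, the first estimate (\ref{est_DrU}) is equivalent to the pointwise gradient bound
$$|\nabla u^s(x)| \le \frac{C\norm{\phi}_{C^k}}{1+\delta(x)}, \qquad \delta(x) := |s+x\cdot n|,$$
with $C$ independent of $n$, $s$, and $\varkappa$.

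To prove this gradient bound I would split into two regimes. For $\delta(x) \le 1$ it follows immediately from the uniform (large-scale) boundary Lipschitz estimate for the Neumann problem with periodic coefficients of \cite{KLS13, AS}, applied to $u^s$ with $C^{0,\alpha}$ boundary datum $T\cdot\nabla\phi$. For $\delta(x) = R \ge 1$, standard interior Lipschitz estimates on $B(x, R/2)\subset\bH^d_n(s)$ yield $|\nabla u^s(x)| \le C\norm{u^s}_{L^\infty(B(x, R/2))}/R$, so the matter is reduced to a uniform $L^\infty$ bound $\norm{u^s}_\infty \le C\norm{\phi}_{C^k}$. I would obtain this via the Neumann kernel representation of $u^s$: writing $u^s(x) = \int_{\partial\bH^d_n(s)} N^*(x,y)\,(T\cdot\nabla\phi)(y)\, d\sigma(y)$ and integrating by parts tangentially in $y$ (using that $T$ is a constant tangential vector), we obtain
$$u^s(x) = -\int_{\partial\bH^d_n(s)} T\cdot\nabla_y N^*(x,y)\, \phi(y)\, d\sigma(y),$$
after which the Poisson-type kernel decay $|T\cdot\nabla_y N^*(x,y)| \le C\delta(x)/|x-y|^d$, itself a consequence of the same large-scale Neumann Lipschitz theory, gives $\norm{u^s}_\infty \le C\norm{\phi}_\infty$.

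With (\ref{est_DrU}) in hand, the higher-derivative bound (\ref{est_DrdkU}) follows exactly as in the final step of Lemma \ref{lem_V}: on each strip $\T^d\times[L, L+1]$, one interpolates the pointwise bound just proved against the global Sobolev estimate (\ref{est_DUSob}) from Lemma \ref{lem_Uexist}, then applies Sobolev embedding to promote $L^2$ control of high-order derivatives into pointwise control, at the cost of a factor $\sigma$ in the decay rate. The main obstacle is the $\varkappa$-independent pointwise gradient decay for $u^s$: in the Dirichlet setting the Poisson formula of Theorem \ref{thm_Poi} and the Miranda--Agmon principle (\ref{max-p}) supplied both the $L^\infty$ bound on $u^s$ and the decaying gradient kernel essentially for free, whereas for the Neumann problem one must either invoke a Neumann kernel with comparable decay or exploit the divergence (tangential derivative) structure of the flux $T\cdot\nabla\phi$ through integration by parts on the boundary.
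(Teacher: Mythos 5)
Your overall strategy matches the paper's: lift $U$ to the half-space solution $u^s$ via (\ref{eq_usU}), exploit $N^T n = 0$ to get the identities relating $N^T\nabla_\theta U$ and $\partial_t U$ to $\nabla u^s$, establish a $\varkappa$-independent pointwise gradient decay for $u^s$, and then obtain (\ref{est_DrdkU}) by interpolating against the global Sobolev bound (\ref{est_DUSob}) and applying Sobolev embedding on strips. The divergence is in how the gradient decay $|\nabla u^s(x)|\le C\norm{\phi}_\infty/|s+x\cdot n|$ is obtained: the paper simply invokes \cite[Theorem 4.1]{ShenZhuge16}, where this is proved, whereas you sketch a proof from scratch by constructing a Neumann function $N^*(x,y)$ in the half-space, integrating by parts in the tangential direction $T$, and asserting the Poisson-type decay $|T\cdot\nabla_y N^*(x,y)|\le C\delta(x)/|x-y|^d$. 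That sketch is plausible in spirit but glosses over two nontrivial points: (i) the existence and uniqueness (modulo constants) of a Neumann function in a half-space for the periodic operator $\mathcal{L}_1^*$ and the justification of the representation formula, neither of which is elementary (indeed the paper notes that solvability of (\ref{eq_usN}) for arbitrary $n$ is unclear and is established for $n\in\bS^{d-1}_D$ only via the lifting); and (ii) the precise tangential kernel decay, which is a genuinely Poisson-like gain over the generic Neumann function estimate $|\nabla_y N^*(x,y)|\lesssim |x-y|^{1-d}$ and needs more than a bare appeal to the large-scale Lipschitz theory. These are precisely the technical content of the cited theorem, so your route is less a new argument than a re-derivation of the citation's internals, with the gaps left open. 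One further minor difference: for the near-boundary regime $t\in[0,1/2]$ the paper gets (\ref{est_DrU}) directly from (\ref{est_DUSob}) plus Sobolev embedding on $\T^d\times[0,1]$ (hence the restriction $k>d/2+1$), while you instead invoke the boundary Lipschitz estimate from \cite{KLS13, AS}; both are fine, but the paper's choice explains where the explicit $k>d/2+1$ in the statement comes from.
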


\begin{proof}
	Let $u^s$ be the solution of (\ref{eq_usN}), given by (\ref{eq_usU}). Then it follows from \cite[Theorem 4.1]{ShenZhuge16} that
	\begin{equation}\label{est_dus}
		|\nabla u^s(x)| \le \frac{ C\norm{\phi}_\infty}{|x\cdot n +s|} \quad \txt{ for  } x\cdot n + s < 0.
	\end{equation}
	Observe that (\ref{eq_usU}) is equivalent to $U(\theta,t) = u^{-\theta \cdot n}(\theta -tn)$ for any $(\theta,t) \in \T^d\times \R_+$. 
	It follows that
	\begin{equation}\label{eq_Utt}
		\left\{
		\begin{aligned}
			N^T \nabla_\theta U(\theta,t) = N^T \nabla_x u^{-\theta \cdot n}(\theta - tn), \\
			\partial_t U(\theta,t) = -n\cdot \nabla_x u^{-\theta \cdot n}(\theta - tn).
		\end{aligned}
		\right.
	\end{equation}
	In view of (\ref{est_dus}) and (\ref{eq_Utt}) we obtain 
	\begin{equation}
		|N^t\nabla_\theta U(\theta,t)| + |\partial_t U(\theta,t)| \le \frac{ C\norm{\phi}_{L^\infty}}{t}.
	\end{equation}
	This gives (\ref{est_DrU}) for $t\ge 1/2$.
	The case $t\in [0, 1/2]$ follows from (\ref{est_DUSob}) and the Sobolev embedding theorem in $\T^d \times [0,1]$, which requires $k>d/2+1$. 
	
	Finally, the estimate (\ref{est_DrdkU}) follows from (\ref{est_DrU}), (\ref{est_DUSob}) and an interpolation argument, as in the proof of Lemma \ref{lem_V}.
\end{proof}

\begin{lemma}\label{lem_dtheU}
	Let $n\in \bS^{d-1}_D$ with Diophantine constant $\varkappa>0$ and $U$ be a solution of (\ref{eq_U}) corresponding to $n$. Then there exists a constant vector $U_\infty$ 
	such that for any $0<\sigma<1$ and $|\alpha|\ge 0$
	\begin{equation}\label{est_Ubd}
		|\partial_\theta^{\alpha} (U - U_\infty)| \le C_\sigma \varkappa^{-\sigma} \norm{f}_{C^k(\T^d)}.
	\end{equation}
	where $k = k(\alpha,\sigma,d)$ and $C_\sigma$ depends only on $d$, $m$, $\alpha$, $\sigma$, and $A$.
\end{lemma}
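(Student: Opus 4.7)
My plan mirrors the proof of \eqref{est_dkVbd} for $V$ in Section 2: combine the $\varkappa$-independent pointwise decay from Lemma \ref{lem_U} with the Diophantine pointwise decay in \eqref{est_DUptwise} via pointwise interpolation. The key difference from the Dirichlet case is that we do \emph{not} have a $\varkappa$-independent uniform bound on $|U - U_\infty|$, since Neumann solutions are only unique up to constants and no direct Miranda--Agmon maximum principle is available for them. Accordingly, rather than interpolating in Sobolev norms as in the proof of the Lemma for $V$, I would interpolate pointwise bounds on $\partial_t U$ (respectively on $\partial_\theta^\alpha \partial_t U$) and then integrate in $t$ from $t$ to $\infty$.

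For $|\alpha|=0$: By \eqref{est_DUptwise} with $\ell = 1$, $U \to U_\infty$ uniformly in $\theta$ as $t \to \infty$, so
\[
(U - U_\infty)(\theta, t) = -\int_t^\infty \partial_s U(\theta, s)\, ds .
\]
Writing $|\partial_s U| = |\partial_s U|^{1-\sigma}\,|\partial_s U|^{\sigma}$ and using the $\varkappa$-independent bound from Lemma \ref{lem_U} for the first factor and the Diophantine bound from \eqref{est_DUptwise} (with exponent $\ell$) for the second gives
\[
|\partial_s U(\theta, s)| \le \frac{C_\sigma \|\phi\|_{C^k}}{(1+s)^{1-\sigma}(1+\varkappa s)^{\ell \sigma}}.
\]
Taking $\ell = 2$ and substituting $u = \varkappa s$, exactly as in the integrals appearing in the proof of Theorem \ref{thm_MU}, yields $\int_0^\infty (\cdot)\, ds \le C_\sigma \varkappa^{-\sigma}$, and the bound $|(U - U_\infty)(\theta, t)| \le C_\sigma \varkappa^{-\sigma} \|\phi\|_{C^k}$ follows uniformly in $(\theta, t)$.

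For $|\alpha| \ge 1$: Since $U_\infty$ is a constant vector, $\partial_\theta^\alpha(U - U_\infty) = \partial_\theta^\alpha U$, which also decays to zero as $t \to \infty$ by \eqref{est_DUptwise}, so $\partial_\theta^\alpha U(\theta, t) = -\int_t^\infty \partial_\theta^\alpha \partial_s U(\theta, s)\, ds$. We interpolate the bound $|\partial_\theta^\alpha \partial_s U| \le C\|\phi\|_{C^k}/(1+s)^{1-\sigma'}$ from \eqref{est_DrdkU} with the Diophantine bound from \eqref{est_DUptwise}; choosing $\sigma' \in (0, \sigma)$ sufficiently small, the interpolation exponent $\tau = (\sigma - \sigma')/(1 - \sigma')$, and $\ell$ large enough so that $\ell\tau > \sigma$, the combined pointwise bound becomes $(1+s)^{-(1-\sigma)}(1+\varkappa s)^{-\ell\tau}$, and the same change-of-variables $u = \varkappa s$ yields $|\partial_\theta^\alpha U(\theta, t)| \le C_\sigma \varkappa^{-\sigma}\|\phi\|_{C^k}$.

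The main technical obstacle is bookkeeping how the Sobolev order $k = k(|\alpha|, \sigma, d)$ grows: the Diophantine estimate \eqref{est_DUptwise} with rate $\ell$ costs more derivatives of $\phi$ as $\ell$ grows, and for $|\alpha|\ge 1$, \eqref{est_DrdkU} with loss $\sigma'$ costs still more as $\sigma' \to 0$. Since once $\sigma$ is fixed both $\ell$ and $1/\sigma'$ remain bounded, the final $k$ is finite. The essential mechanism making the whole argument work is that the $\varkappa$-independent bound $|\partial_t U| \le C/(1+s)$, though not integrable on its own, becomes integrable once multiplied by any positive power of the Diophantine factor $(1+\varkappa s)^{-\ell\sigma}$, and the substitution $u = \varkappa s$ then extracts exactly the rate $\varkappa^{-\sigma}$.
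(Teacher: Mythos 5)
Your proposal is correct, and for the base case $|\alpha|=0$ it is essentially identical to the paper's argument: both interpolate the $\varkappa$-independent decay $|\partial_t U|\lesssim (1+t)^{-1}$ from Lemma~\ref{lem_U} against the Diophantine decay $|\partial_t U|\lesssim (1+\varkappa t)^{-\ell}$ from \eqref{est_DUptwise}, then write $U-U_\infty=-\int_t^\infty \partial_s U\,ds$ and compute the resulting integral to extract $\varkappa^{-\sigma}$. You also correctly flag the crucial structural difference from the Dirichlet case: \eqref{est_Vbd} supplies a $\varkappa$-free $L^\infty$ bound for $V$ via the Miranda--Agmon maximum principle, whereas no such bound is available for the Neumann solution $U$, so one must first pass through $\partial_t U$.

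Where you diverge slightly is in the case $|\alpha|\ge 1$. The paper disposes of it by a one-line Sobolev/H\"older interpolation in the $\theta$-variable, interpolating the $|\alpha|=0$ bound $\|U-U_\infty\|_{L^\infty}\lesssim \varkappa^{-\sigma_0}$ (with $\sigma_0$ small) against the high-order bound $\|\partial_\theta^\alpha(U-U_\infty)\|_{L^\infty}\lesssim \varkappa^{-1}$ from \eqref{est_DUptwise}, and choosing the interpolation parameter small enough that the combined power of $\varkappa$ is at most $\sigma$. You instead rerun the $t$-integration scheme directly on $\partial_\theta^\alpha U = -\int_t^\infty \partial_\theta^\alpha\partial_s U\,ds$, using \eqref{est_DrdkU} in place of \eqref{est_DrU} for the $\varkappa$-free factor. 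This is a perfectly sound and, if anything, more self-contained route: it treats all multi-indices uniformly without invoking a separate interpolation inequality, at the mild cost of the extra bookkeeping with $\sigma'$ and $\tau=(\sigma-\sigma')/(1-\sigma')$ that you carry out correctly. Your identification of the conditions $\ell\tau>\sigma$ (respectively $\ell>1$ for $|\alpha|=0$) needed to make the dyadic integral converge to $C\varkappa^{-\sigma}$ is accurate, as is your remark that the final Sobolev index $k$ remains finite because $\ell$ and $1/\sigma'$ stay bounded once $\sigma$ is fixed.
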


\begin{proof}
	We first observe that it suffices to show 
	$|U - U_\infty| \le C_\sigma \varkappa^{-\sigma} \norm{f}_{C^k(\T^d)}$ for any $0<\sigma<1$. 
	Then the  case $|\alpha|>0$ follows from this and (\ref{est_DUptwise}) by an interpolation argument.
	
	Note that $|U-U_\infty| \to 0$ as $t\to \infty$. 
	It follows from (\ref{est_DUptwise}) and (\ref{est_DrU}) that
	\begin{equation}\label{key}
		|\partial_t U(\theta,t)| \le C\,  \frac{\norm{f}_{C^k(\T^d)}^{1-\sigma}}{(1+t)^{1-\sigma}} \cdot \frac{\norm{f}_{C^k(\T^d)}^\sigma}{ (1+\varkappa t)^{\sigma \ell}}.
	\end{equation}
	Hence,
	\begin{align*}
		\sup_{t>0} |(U - U_\infty)(\theta,t)| &\le \int_0^\infty |\partial_t U(\theta,t)| \, dt \\
		& \le C \norm{f}_{C^k(\T^d)} \int_0^\infty \frac{dt}{(1+t)^{1-\sigma} (1+\varkappa t)^{\sigma \ell}} \\
		& \le C \varkappa^{-\sigma} \norm{f}_{C^k(\T^d)}.
	\end{align*}
	This completes the proof.
\end{proof}

\begin{proof}[\bf Proof of Theorem \ref{thm_tauC}]

	{\bf Step 1: Set-up and reduction.} Let $n_1 = (n_{1,1},\cdots,n_{1,d})$, $n_2 =(n_{2,1},\cdots,n_{2,d})
	\in \mathbb{S}^{d-1}_D$
	and $\delta = |n_1 - n_2|>0$. Choose 
	 $d\times (d-1)$ matrices $N_1$, $N_2$ such that both $M_1 = (N_1,-n_1)$ and 
	 $M_2 = (N_2,-n_2)$ are orthogonal and $|N_1-N_2|\le C \delta$.
	  Let $U_1$, $U_2$ be  solutions of the systems in the form of (\ref{eq_Un}) associated with $n_1,n_2$, respectively, i.e.,
	\begin{equation}\label{eq_U1}
		\left\{
		\begin{aligned}
			- \Dr{N^T_1 \nabla_\theta}{\partial_t} \cdot B_1
			\Dr{N^T_1 \nabla_\theta}{\partial_t} U_1
			&= 0 &\quad & \txt{ in }\T^d\times (0,\infty), \\
			-e_{d+1}\cdot B_1 \Dr{N^T_1 \nabla_\theta}{\partial_t} U_1 &= T_{1,ij} \cdot \nabla_\theta \phi_{ij} &\quad & \txt{ on }  \T^d\times \{0\},
		\end{aligned}
		\right.
	\end{equation}
	and
	\begin{equation}\label{eq_U2}
		\left\{
		\begin{aligned}
			- \Dr{N^T_2 \nabla_\theta}{\partial_t} \cdot B_2
			\Dr{N^T_2 \nabla_\theta}{\partial_t} U_2
			&= 0  &\quad & \txt{ in } \T^d\times (0,\infty), \\
			-e_{d+1}\cdot B_2 \Dr{N^T_2 \nabla_\theta}{\partial_t} U_2 &= T_{2,ij} \cdot \nabla_\theta \phi_{ij} 
			&\quad & \txt{ on }  \T^d\times \{0\},
		\end{aligned}
		\right.
	\end{equation}
	where $T_{\ell,ij} = n_{\ell,i} e_j - n_{\ell,j} e_i$
	 are  vectors orthogonal to $n_\ell$ and $B_\ell(\theta,t) = M_\ell^T A^*(\theta-tn_\ell)M_\ell$
	  for $\ell = 1,2$.

Without loss of generality, we may assume that
$\varkappa = \varkappa(n_1)\ge \varkappa(n_2)$. 
In view of the formula (\ref{eq_tau}), we only need to show
that
\begin{equation}\label{key}
	\int_{\T^d}|T_{1,ij} \cdot \nabla_\theta U_1(\theta,0) - T_{2,ij} \cdot \nabla_\theta U_2(\theta,0)|^2 \, d\theta
	 \le C_\sigma \varkappa^{-2\sigma}|n_1 - n_2|^2 
\end{equation}
for $1\le i, j\le d$.
By the triangle inequality, 
\begin{align*}\label{key}
	& \int_{\T^d} |T_{1,ij} \cdot \nabla_\theta U_1(\theta,0) - T_{2,ij} \cdot \nabla_\theta U_2(\theta,0)|^2 \,
	d\theta \\
	&\qquad  \le 2\int_{\T^d} |(T_{1,ij} - T_{2,ij}) \cdot \nabla_\theta U_1(\theta,0) |^2 \, d\theta + 2\int_{\T^d}  |T_{2,ij} \cdot \nabla_\theta (U_1(\theta,0) - U_2(\theta,0))|^2 \, d\theta \\
	&\qquad \le C \varkappa^{-2\sigma} \delta^2 +  C\int_{\T^d} |N_2^T \nabla_\theta(U_1(\theta,0) - U_2(\theta,0))|^2 \, d\theta,
\end{align*}
where in the last inequality we have used (\ref{est_Ubd}) 
and the fact that the columns of $N_2$ span the subspace orthogonal to $n_2$. 
Furthermore, we let $W = U_1 - U_2$ and note that
\begin{equation}\label{est_N2Wt0}
\aligned
&	\int_{\T^d} |N_2^T \nabla_{\theta} W(\theta,0) |^2 \, d\theta\\
&\quad
 \le 2 \int_0^1 \int_{\T^d} |N_2^T \nabla_{\theta} W(\theta,t) |^2 \, d \theta dt +  2\int_0^1 \int_{\T^d} |N_2^T \nabla_{\theta} \partial_t W(\theta,t) |^2 \,d\theta dt.
 \endaligned
\end{equation}
As a result, it suffices to estimate the two terms in the RHS of the above inequality.

\medskip

{\bf Step 2: Estimate for $N^T_2 \nabla_\theta W$.} 

\medskip

The argument here is similar to that 
for Dirichlet problems, with Lemmas \ref{lem_Uexist}, \ref{lem_U} and \ref{lem_dtheU} in our disposal. 
Note that $W$ satisfies
\begin{equation}\label{eq_tWmain}
	\left\{
	\begin{aligned}
		- \Dr{N^T_2 \nabla_\theta}{\partial_t} \cdot B_2
		\Dr{N^T_2 \nabla_\theta}{\partial_t} W
		&= \Dr{N^T_2 \nabla_\theta}{\partial_t} \cdot G + H &\quad & \txt{ in } \T^d\times \R_+, \\
		-e_{d+1}\cdot B_2 \Dr{N^T_2 \nabla_\theta}{\partial_t} W &= e_{d+1} \cdot G +  (T_{1,ij} - T_{2,ij}) \cdot \nabla_\theta \phi_{ij} &\quad & \txt{ on }  \T^d\times \{0\},
	\end{aligned}
	\right.
\end{equation}
where $G = G_1 + G_2$ and $H$ are exactly the same as in (\ref{eq_G12H}) for Dirichlet problems. 

The following two lemmas were proved in  \cite{ShenZhuge16}.

\begin{lemma}\label{lem_A1est_N}
	Let $n\in \bS^{d-1}_D$ and $U$ be a solution of
	\begin{equation}
		\left\{
		\begin{aligned}
			-\Dr{N^T \nabla_\theta}{\partial_t}
			\cdot B \Dr{N^T \nabla_\theta}{\partial_t} U
			&= \Dr{N^T \nabla_\theta}{\partial_t} \cdot G  &\quad & \txt{ in }\T^d\times \R_+, \\
			-e_{d+1}\cdot B \Dr{N^T \nabla_\theta}{\partial_t} U &= e_{d+1}\cdot G  
			&\quad & \txt{ on }  \T^d\times \{0\}.
		\end{aligned}
		\right.
	\end{equation}
	Assume that
	\begin{equation}
		\sup_{t>0,\theta\in \T^d} (1+t)\Big\{ |N^T\nabla_\theta U(\theta,t)| + |\partial_t U(\theta,t)| + |G(\theta,t)| \Big\} < \infty.
	\end{equation}
	Then, for any $0<\sigma<1$,
	\begin{equation}
		\int_0^\infty \int_{\T^d} \left(|N^T \nabla_\theta U|^2 + |\partial_t U|^2\right) t^{\sigma-1} \, d\theta dt 
		\le C_\sigma \int_0^\infty \int_{\T^d} |G|^2 t^{\sigma-1} \, d\theta dt.
	\end{equation}
\end{lemma}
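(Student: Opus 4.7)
The approach is to establish the weighted energy estimate by testing the equation against a suitable weighted multiplier, paralleling the Dirichlet version (Lemma \ref{lem_A1est}). The crucial feature of the Neumann setting is that for any smooth, decaying test function $\phi$, integrating by parts on both sides of the PDE and invoking the Neumann boundary condition at $t=0$ produces matching $t=0$-boundary terms that cancel exactly, yielding the same weak identity
\begin{equation*}
\int_0^\infty\!\!\int_{\T^d}\! \Dr{N^T\nabla_\theta\phi}{\partial_t\phi}\cdot B\Dr{N^T\nabla_\theta U}{\partial_t U}\,d\theta dt \;=\; -\!\int_0^\infty\!\!\int_{\T^d}\! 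G\cdot \Dr{N^T\nabla_\theta\phi}{\partial_t\phi}\,d\theta dt
\end{equation*}
as in the Dirichlet case, with no remaining boundary contribution.

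Next I would plug in $\phi(\theta,t) = -\eta_R(t)\, t^{\sigma}\, \partial_t U(\theta,t)$, where $\eta_R$ is a smooth cutoff equal to $1$ on $[0,R]$ and vanishing on $[2R,\infty)$. Writing $Q(\theta,t) = \Dr{N^T\nabla_\theta U}{\partial_t U}\cdot B\Dr{N^T\nabla_\theta U}{\partial_t U}$ for the quadratic form, the highest-order contribution $-\eta_R t^\sigma\, \Dr{N^T\nabla_\theta\partial_t U}{\partial_t^2 U}\cdot B\Dr{N^T\nabla_\theta U}{\partial_t U}$ equals $-\tfrac{1}{2}\eta_R t^\sigma\,\partial_t Q$ up to a commutator with $\partial_t B$ (using symmetrization of $B$). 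Integrating this piece by parts in $t$, the boundary at $t=0$ vanishes because $t^\sigma=0$ there (as $\sigma>0$) and the boundary at $t=2R$ vanishes via $\eta_R$, so $\partial_t$ converts into $-\sigma t^{\sigma-1}\eta_R - t^\sigma \eta_R'$. Sending $R\to\infty$, justified by the hypothesized pointwise decay, the $\eta_R'$ piece drops out and ellipticity of $B$ produces the coercive term $c\,\sigma \int_0^\infty t^{\sigma-1}\int_{\T^d} (|N^T\nabla_\theta U|^2+|\partial_t U|^2)\,d\theta dt$ on the left.

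Finally, I would bound the remaining terms on the right. The $(\partial_t B)$-commutator carries weight $t^\sigma$, which is strictly weaker than the coercive $t^{\sigma-1}$, and together with the pointwise decay $|N^T\nabla_\theta U|+|\partial_t U|\le C/(1+t)$ it is either absorbed with a small constant or directly bounded. The right-hand side $-\int G\cdot \Dr{N^T\nabla_\theta\phi}{\partial_t\phi}$ decomposes into a term pairing $G$ against $-\eta_R t^\sigma\Dr{N^T\nabla_\theta\partial_t U}{\partial_t^2 U}$---handled by one more integration by parts in $t$ to transfer $\partial_t$ onto $G$, then weighted Cauchy--Schwarz with weights $t^{(\sigma-1)/2}$ and $t^{(\sigma+1)/2}$---and a lower-order term pairing $G$ against $\sigma \eta_R t^{\sigma-1}\partial_t U\, e_{d+1}$, handled directly by Young's inequality. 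The main obstacle I expect is this second, lower-order piece: it sits on the \emph{same} weight $t^{\sigma-1}$ as the coercive term, so absorption requires careful tracking of constants and likely isolating the strictly positive entry $b_{d+1,d+1}$ of $B$. An alternative route that avoids this difficulty is a dyadic decomposition $\R_+=\bigcup_k [2^k,2^{k+1}]$: on each annulus one applies an unweighted Caccioppoli-type estimate, and then sums the resulting estimates against the geometric weights $2^{k(\sigma-1)}$ to recover the weighted bound.
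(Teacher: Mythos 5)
The paper does not actually prove this lemma; it cites \cite{ShenZhuge16}, so there is no in-paper argument to compare against. Your blind proposal, however, contains a genuine gap at its central step.

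Your weak identity and the observation that the Neumann condition makes the $t=0$ boundary contributions cancel are both correct. The problem is the coercivity claim after plugging in $\phi = -\eta_R\, t^{\sigma}\,\partial_t U$. Write $D = \Dr{N^T\nabla_\theta}{\partial_t}$ so that $D\phi = -\eta_R t^\sigma D\partial_t U - \partial_t(\eta_R t^\sigma)\,\partial_t U\, e_d$. The left side $\int BDU\cdot D\phi$ therefore has \emph{two} pieces at the critical weight $t^{\sigma-1}$: after integrating $-\tfrac12\eta_R t^\sigma\partial_t Q$ by parts you get $\tfrac{\sigma}{2}\int\eta_R t^{\sigma-1}Q$, but the second component of $D\phi$ also contributes $-\sigma\int\eta_R t^{\sigma-1}(BDU)_d\,\partial_t U$. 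These combine to
\begin{equation*}
\sigma\int\eta_R\, t^{\sigma-1}\Big[\tfrac12\, DU\cdot BDU - (BDU)_d\,(DU)_d\Big]\,d\theta\, dt,
\end{equation*}
and the bracketed quadratic form is \emph{indefinite}: when $DU$ is concentrated in the $\partial_t$ direction (e.g.\ $B$ the identity, $DU=\partial_t U\,e_d$) it equals $-\tfrac12 b_{dd}|\partial_t U|^2 <0$. You can check this on the explicit scalar example $U=e^{-at}$ with $G=(0,ae^{-at})$: the two $t^{\sigma-1}$-terms have opposite signs and sum to $-\tfrac{\sigma}{2}\int t^{\sigma-1}a^2e^{-2at}$, not a positive quantity. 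So the statement ``ellipticity of $B$ produces the coercive term'' is false; the issue is not a constant you can track or absorb by isolating $b_{d+1,d+1}$---there is simply no coercive term with this multiplier. (You noticed the analogous $G$-vs.\ $\sigma t^{\sigma-1}\partial_t U$ pairing on the right, but overlooked the matching $BDU$-vs.\ $\sigma t^{\sigma-1}\partial_t U$ pairing on the left, which is the one that kills the argument.)

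It is worth noting why the Neumann case resists the obvious alternative. For the Dirichlet analogue (Lemma \ref{lem_A1est}) the natural multiplier is $\phi = \eta^2 t^{\sigma-1}U$, which is admissible because $U(\cdot,0)=0$, and the error term with weight $t^{\sigma-3}|U|^2$ is absorbed by Hardy's inequality at $t=0$. For Neumann $U(\cdot,0)\neq 0$, so $t^{\sigma-1}U$ is not a legal test function and Hardy at $t=0$ is unavailable (and Hardy at $t=\infty$ would require $\sigma>2$). Your proposed multiplier $\partial_t U$ sidesteps the boundary issue but, as shown above, loses coercivity. Your fallback via dyadic decomposition also does not close as stated: Caccioppoli on an annulus $[2^k,2^{k+1}]$ produces $4^{-k}\int|U-c_k|^2$, and trying to control this by Poincar\'e with the \emph{degenerate} gradient $D$ (you only have $N^T\nabla_\theta$, not the full $\nabla_\theta$) returns $\int|DU|^2$ with the same $t^{\sigma-1}$ weight and an $O(1)$ constant, giving a circular bound rather than an absorbable one. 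So both routes in your proposal have gaps, and a different idea is needed---which is presumably what \cite{ShenZhuge16} supplies.
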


\begin{lemma}\label{lem_Loc_N}
	Let $n\in \bS^{d-1}_D$ and $U$ be a solution of
	\begin{equation}
		\left\{
		\begin{aligned}
			-\Dr{N^T \nabla_\theta}{\partial_t}
			\cdot B \Dr{N^T \nabla_\theta}{\partial_t} U
			&= 0  &\quad & \txt{ in } \T^d\times \R_+, \\
			-e_{d+1}\cdot B \Dr{N^T \nabla_\theta}{\partial_t} U &= h  &\quad & \txt{ on }  \T^d\times \{0\}.
		\end{aligned}
		\right.
	\end{equation}
	Assume that
	\begin{equation}\label{est_LocU}
		\sup_{t>0,\theta\in \T^d} (1+t)\Big\{ |N^T\nabla_\theta U(\theta,t)| + |\partial_t U(\theta,t)|\Big\} < \infty.
	\end{equation}
	Then,
	\begin{equation}
		\int_0^2 \int_{\T^d} \left(|N^T \nabla_\theta U|^2 + |\partial_t U|^2\right) 
		 d\theta dt \le C \int_{\T^d} |h|^2 \, d\theta.
	\end{equation}
\end{lemma}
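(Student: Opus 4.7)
My strategy is to prove this boundary Caccioppoli-type estimate via an energy identity obtained by testing the bulk equation against $\eta(t)^2(U-c)$, where $\eta \in C^\infty_c([0,3))$ is a cutoff with $\eta\equiv 1$ on $[0,2]$ and $|\eta'|\le 2$, and $c \in \R^m$ is a constant to be chosen (the bulk equation is invariant under adding constants to $U$). The decay hypothesis (\ref{est_LocU}) ensures $DU:=\Dr{N^T\nabla_\theta U}{\partial_t U} \in L^2(\T^d\times\R_+)$, so all integrations by parts converge. Using periodicity in $\theta$, the Neumann condition $-e_{d+1}\cdot BDU|_{t=0}=h$, and the vanishing of $\eta$ at $t=3$, integration by parts yields
\[
\int_{\T^d\times\R_+} \eta^2\, BDU\cdot DU\, d\theta dt = -2\int \eta\eta'(U-c)(BDU)_{d+1}\, d\theta dt + \int_{\T^d} h\,\bigl(U(\cdot,0)-c\bigr)\, d\theta.
\]
By ellipticity the LHS dominates $\lambda \int_0^2\int_{\T^d} |DU|^2\, d\theta dt$, so the task is to control the right-hand side by a multiple of $\norm{h}_{L^2(\T^d)}^2$ and a fraction of $\int_0^3\!\int_{\T^d}|DU|^2$ that can be absorbed.

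For the boundary term, I would invoke the Neumann compatibility $\int_{\T^d} h\, d\theta = 0$. This follows by integrating the bulk equation over $\T^d\times(0,\infty)$: the $N^T\nabla_\theta$-divergence part vanishes by periodicity, while the decay of $|(BDU)_{d+1}(\cdot,t)|$ from (\ref{est_LocU}) makes the $t=\infty$ boundary term vanish in $L^1(\T^d)$, leaving $\int_{\T^d} h = 0$. Hence $\int h(U(\cdot,0)-c) = \int h(U(\cdot,0)-c_0)$ for any $c_0 \in \R^m$, and Cauchy--Schwarz bounds this by $\norm{h}_{L^2(\T^d)} \norm{U(\cdot,0)-c_0}_{L^2(\T^d)}$. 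Likewise, the cross term is supported in $\T^d\times[2,3]$ and is controlled via Cauchy--Schwarz by $C\norm{U-c}_{L^2(\T^d\times[2,3])} \norm{DU}_{L^2(\T^d\times[2,3])}$.

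The main obstacle is thus a trace--Poincar\'e-type inequality of the form
\[
\norm{U(\cdot,0)-c_0}_{L^2(\T^d)}^2 + \norm{U-c_0}_{L^2(\T^d\times[2,3])}^2 \le C\int_0^3\int_{\T^d} |DU|^2\, d\theta dt
\]
for an appropriate $c_0$; the difficulty is that $DU$ omits the component $n\cdot\nabla_\theta U$, so neither ordinary Poincar\'e on $\T^d$ nor a direct trace inequality applies verbatim. I would attempt this by taking $c_0 = \fint_{\T^d\times[2,3]} U\,d\theta dt$, using 1D Poincar\'e in $t$ on $[2,3]$ to reduce the strip bound to one for the $t$-average $\bar U(\theta):=\fint_2^3 U(\theta,t)\, dt$, and then integrating the bulk PDE in $t$ over $[2,3]$ to obtain a divergence-form equation on $\T^d$ for $\bar U$ whose right-hand side involves only the boundary fluxes $(BDU)(\cdot,2)$ and $(BDU)(\cdot,3)$; testing this equation against $\bar U - c_0$ controls $\bar U - c_0$ in $L^2(\T^d)$ by $\norm{DU}_{L^2(\T^d\times[0,3])}$ through standard elliptic estimates. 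The trace at $t=0$ is handled similarly, using the strip $[0,3]$ and an averaging of the PDE there. Granted this trace inequality, Young's inequality absorbs the $\norm{DU}$ factors into the LHS of the energy identity, delivering $\int_0^2 \int_{\T^d} |DU|^2 \le C\norm{h}^2_{L^2(\T^d)}$.
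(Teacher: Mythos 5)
Your Caccioppoli set-up is sound as far as it goes: the decay hypothesis (\ref{est_LocU}) does put $\Dr{N^T\nabla_\theta U}{\partial_t U}$ in $L^2(\T^d\times\R_+)$, the energy identity with the cutoff is correct, and your derivation of the compatibility condition $\int_{\T^d}h\,d\theta=0$ (integrating the bulk equation and letting the flux at $t=T$ tend to zero) is valid. Note, for the record, that the paper does not prove this lemma at all — it is quoted from \cite{ShenZhuge16} — so the comparison here is with what a correct proof must contain.

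The genuine gap is exactly at the point you flag as ``the main obstacle,'' and your proposed resolution does not close it. The averaged equation you derive for $\bar U$ on $\T^d$ is governed by the operator $N^T\nabla_\theta$, whose Fourier symbol is $(I-n\otimes n)\xi$; for irrational $n$ (Diophantine or not) one has $\inf_{\xi\in\Z^d\setminus\{0\}}|(I-n\otimes n)\xi|=0$, so this operator has no spectral gap on $\T^d$, there is no Poincar\'e inequality $\norm{v-\fint v}_{L^2(\T^d)}\le C\norm{N^T\nabla_\theta v}_{L^2(\T^d)}$ with any finite constant, and ``standard elliptic estimates'' simply do not apply to the degenerate averaged equation (it is elliptic only in the $d-1$ directions spanned by the columns of $N$). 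Worse, the trace--Poincar\'e inequality you need is \emph{false} as a standalone functional inequality: take $U(\theta,t)=e^{2\pi i\xi\cdot\theta}$ with $|(I-n\otimes n)\xi|$ arbitrarily small; then $\int_0^3\int_{\T^d}|DU|^2$ is arbitrarily small while $\norm{U(\cdot,0)-c_0}_{L^2(\T^d)}=1$ for every constant $c_0$. Any valid proof must therefore exploit the equation and the Neumann boundary condition in an essential, quantitative way to control the boundary pairing $\int_{\T^d}h\,(U(\cdot,0)-c)$ — this is precisely why the cited proof works through the corresponding half-space Neumann problem for $\mathcal{L}_1^*$ and uniform $L^2$ (Rellich-type) boundary estimates, rather than through a direct energy/Poincar\'e argument on the torus. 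As written, your argument establishes the reduction but not the lemma.
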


Now we split $W$ as $W = W_1 + W_2 + W_3$, where
\begin{equation}\label{eq_W1_N}
	\left\{
	\begin{aligned}
		- \Dr{N^T_2 \nabla_\theta}{\partial_t} \cdot B_2
		\Dr{N^T_2 \nabla_\theta}{\partial_t} W_1
		&= 0  &\quad & \txt{ in } \T^d\times \R_+, \\
		-e_{d+1}\cdot B_2 \Dr{N^T_2 \nabla_\theta}{\partial_t} W_1 &= (T_{1,ij} - T_{2,ij}) \cdot \nabla_\theta \phi_{ij}  &\quad & \txt{ on } \T^d\times \{0\},
	\end{aligned}
	\right.
\end{equation}

\begin{equation}
	\left\{
	\begin{aligned}
		- \Dr{N^T_2 \nabla_\theta}{\partial_t} \cdot B_2
		\Dr{N^T_2 \nabla_\theta}{\partial_t} W_2
		&= \Dr{N^T_2 \nabla_\theta}{\partial_t} \cdot G  &\quad & \txt{ in } \T^d\times \R_+, \\
		-e_{d+1}\cdot B_2 \Dr{N^T_2 \nabla_\theta}{\partial_t} W_2 &= e_{d+1} \cdot G &
		\quad & \txt{ on }  \T^d\times \{0\},
	\end{aligned}
	\right.
\end{equation}

and

\begin{equation}
	\left\{
	\begin{aligned}
		- \Dr{N^T_2 \nabla_\theta}{\partial_t} \cdot B_2
		\Dr{N^T_2 \nabla_\theta}{\partial_t} W_3
		&= H  &\quad & \txt{ in } \T^d\times \R_+, \\
		-e_{d+1}\cdot B_2 \Dr{N^T_2 \nabla_\theta}{\partial_t} W_3 &= 0  &\quad & \txt{ on } 
		\T^d\times \{0\}.
	\end{aligned}
	\right.
\end{equation}

{\bf Estimate for $W_1$.} Since $\phi_{ij}$ is smooth, we can 
show that (\ref{eq_W1_N}) is solvable and the solution $W_1$ satisfies (\ref{est_LocU}). 
Thus, by Lemma \ref{lem_Loc_N}, 
\begin{equation}
\aligned
	\int_0^2 \int_{\T^d} \left(|N^T_2 \nabla_\theta W_1|^2 + |\partial_t W_1|^2\right)  d\theta dt  
	& \le C \int_{\T^d} |T_{1,ij} - T_{2,ij}|^2 |\nabla_\theta \phi_{ij}|^2 \, d\theta dt\\
	& \le C \delta^2.
	\endaligned
\end{equation}

{\bf Estimate for $W_2$.} By Lemma \ref{lem_A1est_N}, we have
\begin{align*}
	\int_0^\infty \int_{\T^d} \left(|N^T_2 \nabla_\theta W_2|^2 + |\partial_t W_2|^2\right) t^{\sigma-1} \, 
	d\theta dt & \le C \int_0^\infty \int_{\T^d} |G|^2 t^{\sigma-1} \, d\theta dt \\
	& \le C \sum_{k=1,2} \int_0^\infty \int_{\T^d} |G_k|^2 t^{\sigma-1} \, d\theta dt.
\end{align*}
Using (\ref{est_Ubd}) and (\ref{est_DUptwise}), 
we obtain
\begin{equation}\label{key}
	|\nabla_\theta U_1| \le C \varkappa^{-\sigma(1-\sigma)} [\varkappa^{-1}(1+\varkappa t)^{-\ell}]^{\sigma}
	 \le C \varkappa^{-2\sigma}(1+\varkappa t)^{-\sigma\ell}.
\end{equation}
Hence,
\begin{align*}\label{key}
	\int_0^\infty \int_{\T^d} |G_1|^2 t^{\sigma-1} \, d\theta  dt 
	& \le C \varkappa^{-4\sigma} \delta^2 \int_0^\infty (1+\varkappa t)^{-2\sigma \ell} t^{\sigma-1} \, dt \\
	& \le C \varkappa^{-5\sigma} \delta^2.
\end{align*}
Similarly, by (\ref{est_DrU}) and (\ref{est_DUptwise}), we have
\begin{equation}\label{key}
	|N^T_1 \nabla_\theta U_1| + |\partial_t U_1| \le C (1+t)^{1-\sigma} (1+\varkappa t)^{-\sigma\ell}.
\end{equation}
It follows that
\begin{align*}\label{key}
	\int_0^\infty \int_{\T^d} |G_2|^2 t^{\sigma-1} \, d\theta dt & \le
	C \delta^2 \int_0^\infty t^2 (1+t)^{2\sigma-2}(1+\varkappa t)^{-2\sigma \ell} t^{\sigma-1} \, dt \\
	& \le C \varkappa^{-3\sigma} \delta^2.
\end{align*}
As a result, we may conclude that
\begin{equation}\label{key}
	\int_0^\infty \int_{\T^d} \left(|N^T_2 \nabla_\theta W_2|^2 + |\partial_t W_2|^2\right) t^{\sigma-1} \, d\theta dt \le
	C \varkappa^{-5\sigma} \delta^2.
\end{equation}

{\bf Estimate for $W_3$.} The estimate for $W_3$ can be reduced to the first two cases. Let
\begin{equation}\label{key}
	\widetilde{H}(\theta,t) = -\int_t^\infty H(\theta,s) ds.
\end{equation}
Note that $\widetilde{H}$ is bounded for all $(\theta,t)\in \T^d\times \R_+$. Write
\begin{equation}\label{key}
	H(\theta,t) =\partial_t \widetilde{H}(\theta,t) = \Dr{N^T_2 \nabla_\theta}{\partial_t} \cdot \Dr{0}{\widetilde{H}(\theta,t)}.
\end{equation} 
Then, we can further decompose $W_3$ into $W_3 = W_{31} + W_{32}$, where
\begin{equation}
	\left\{
	\begin{aligned}
		- \Dr{N^T_2 \nabla_\theta}{\partial_t} \cdot B_2
		\Dr{N^T_2 \nabla_\theta}{\partial_t} W_{31}
		&= \Dr{N^T_2 \nabla_\theta}{\partial_t} \cdot \Dr{0}{\widetilde{H}(\theta,t)} &\quad & \txt{ in } \T^d\times 
		\R_+, \\
		-e_{d+1}\cdot B_2 \Dr{N^T_2 \nabla_\theta}{\partial_t} W_{32} &= e_{d+1}\cdot \Dr{0}{\widetilde{H}(\theta,t)}  &\quad & \txt{ on } \T^d\times \{0\},
	\end{aligned}
	\right.
\end{equation}
and
\begin{equation}
	\left\{
	\begin{aligned}
		- \Dr{N^T_2 \nabla_\theta}{\partial_t} \cdot B_2
		\Dr{N^T_2 \nabla_\theta}{\partial_t} W_{32}
		&= 0  &\quad & \txt{ in } \T^d\times \R_+, \\
		-e_{d+1}\cdot B_2 \Dr{N^T_2 \nabla_\theta}{\partial_t} W_{32} &= - e_{d+1}\cdot \Dr{0}{\widetilde{H}(\theta,t)} & \quad & \txt{ on }  \T^d\times \{0\}.
	\end{aligned}
	\right.
\end{equation}

Now by applying Lemma \ref{lem_A1est_N} for $W_{31}$, we obtain
\begin{equation}\label{key}
	\int_0^\infty \int_{\T^d} \left(|N^T_2 \nabla_\theta W_{31}|^2 + |\partial_t W_{31}|^2\right) 
	t^{\sigma-1}\, d\theta dt \le C\int_0^\infty \int_{\T^d} |\widetilde{H}|^2 t^{\sigma-1}\,  d\theta dt.
\end{equation}
It follows from Hardy's inequality (see \cite[p.272]{Stein}) that
\begin{align*}\label{key}
	\int_0^\infty \int_{\T^d} |\widetilde{H}|^2 t^{\sigma-1} \, d\theta dt & = \int_{\T^d} \int_0^\infty \bigg|\int_t^\infty H(\theta,s) ds \bigg|^2 t^{\sigma-1} \, dt d\theta \\
	& \le  \frac{4}{(1-\sigma)^2} \int_{\T^d} \int_0^\infty |H(\theta,t)|^2 t^{\sigma-1+2} \, dt d\theta.
\end{align*}
Consequently,
\begin{equation*}
	\int_0^\infty \int_{\T^d} \left(|N^T_2 \nabla_\theta W_{31}|^2 
	+ |\partial_t W_{31}|^2\right) t^{\sigma-1} \, d\theta dt
	\le C  \int_0^\infty \int_{\T^d} |H|^2 t^{1+\sigma}\, d\theta dt.
\end{equation*}
For $W_{32}$, using Lemma \ref{lem_Loc_N} and H\"{o}lder's inequality, we have
\begin{align*}
	\int_0^2 \int_{\T^d} & \left(|N^T_2 \nabla_\theta W_{32}|^2 + |\partial_t W_{32}|^2\right) d\theta dt \\
	& \le C \int_{\T^d} |\widetilde{H}(\theta,0)|^2\,  d\theta \\
	& \le C \int_{\T^d} \bigg| \int_0^\infty |H(\theta,t)| dt\bigg|^2 \, d\theta \\
	& \le C \int_{\T^d} \int_0^\infty |H(\theta,t)|^2 (1+t)^{2-\alpha} dt \int_0^\infty (1+t)^{\alpha-2} \, dt  d\theta \\
	& \le C  \int_0^\infty\int_{\T^d} (1+t)^2 |H(\theta,t)|^2 t^{-\alpha}\,  d\theta dt.
\end{align*}
Therefore, 
\begin{align*}
	\int_0^2 \int_{\T^d}  & \left(|N^T_2 \nabla_\theta W_3|^2 + |\partial_t W_3|^2\right) d\theta dt \\
	&\le C \int_0^\infty \int_{\T^d}  (1+t)^2 |H|^2 t^{\sigma-1} \, d\theta dt \\
	&\le C \delta^2 \int_0^\infty (1+t)^{2-2(\sigma-1)^2} (1+\varkappa t)^{-2\sigma \ell} t^{\sigma-1} \, dt \\
	& \le C  \varkappa^{-5\sigma} \delta^2,
\end{align*}
where in the last inequality we have chosen $\ell\ge 2$.

Summing up the estimates for $W_k$, we arrive at
\begin{equation}
	\int_0^2 \int_{\T^d} \left(|N^T_2 \nabla_\theta W|^2 + |\partial_t W|^2\right) d\theta dt \le C \varkappa^{-5\sigma} \delta^2,
\end{equation}
which proves the first part of (\ref{est_N2Wt0}), as $\sigma\in (0,1)$ can be arbitrarily small.

\medskip

{\bf Step 3: Estimate for $N^T_2 \nabla_\theta \partial_t W$.} 

\medskip

The argument is similar to Step 3 in
 the proof of Theorem \ref{thm_MU}. 
 Let $N_{2k}$ denote the $k$th column of $N_2$,
  and define the $k$th component of $N^T_2\nabla_\theta$ 
  by $\nabla_{2k} = N_{2k}^T\cdot \nabla_\theta$. We apply $\nabla_{2k}$ to (\ref{eq_tWmain}) and obtain
\begin{equation}\label{eq_d2jW}
	\left\{
	\begin{aligned}
		- \Dr{N^T_2 \nabla_\theta}{\partial_t} \cdot B_2
		\Dr{N^T_2 \nabla_\theta}{\partial_t} \nabla_{2k}W
		&= \Dr{N^T_2 \nabla_\theta}{\partial_t} \cdot \nabla_{2k}G + \nabla_{2k}H \\
		&\qquad +\Dr{N^T_2 \nabla_\theta}{\partial_t} \cdot \nabla_{2k}B_2
		\Dr{N^T_2 \nabla_\theta}{\partial_t} W  \\
		&\qquad \txt{ in } \T^d\times \R_+, \\
		-e_{d+1}\cdot B_2 \Dr{N^T_2 \nabla_\theta}{\partial_t} \nabla_{2k}W &= e_{d+1} \cdot \nabla_{2k}G +  \nabla_{2k}h \\
		&\qquad + e_{d+1}\cdot \nabla_{2k}B_2
		\Dr{N^T_2 \nabla_\theta}{\partial_t} W  \\
		&\qquad \txt{ on }  \T^d\times \{0\},
	\end{aligned}
	\right.
\end{equation}
where $h = (T_{1,ij} - T_{2,ij}) \cdot \nabla_\theta f_{ij}$. 
Let $\eta(t)$ be a cut-off function such that $\eta(t) = 1$ for $t\in [0,1]$,
 $\eta(t) = 0$ for $t\in [2,\infty)$,
 $0\le \eta(t)\le 1$ and $|\nabla \eta |\le C$. 
Now by integrating (\ref{eq_d2jW}) against $\nabla_{2k}(W \eta^2)$, we derive from integration by parts that
\begin{align*}
	&\int_0^1 \int_{\T^d} \left(|N^T_2 \nabla_\theta \nabla_{2k} W|^2 + |\partial_t \nabla_{2k} W|^2\right) d\theta dt \\
	&\qquad \le C \int_0^2 \int_{\T^d}
	\left(|\nabla_{2k}G|^2 + |\nabla_{2k}H|^2 +|N_2^T \nabla_\theta W|^2 +|\partial_t W|^2\right) d\theta dt + 
	C\norm{h}_{H^1(\T^d)}^2\\
	&\qquad \le C \varkappa^{-5\sigma} \delta^2.
\end{align*}
Consequently,
\begin{equation}\label{key}
	\int_0^1 \int_{\T^d} \left(|N^T_2 \nabla_\theta\otimes N^T_2\nabla_\theta W|^2
	 + |\partial_t N^T_2 \nabla_\theta W|^2\right) d\theta dt \le C \varkappa^{-5\sigma} \delta^2,
\end{equation}
which finishes the proof.
\end{proof}

\begin{proof}[\bf Proof of Theorem \ref{main-thm-02}]
With Theorem \ref{thm_tauC} at our disposal, the proof of Theorem \ref{main-thm-02} is identical to
 that of Theorem \ref{main-thm-01}.
\end{proof}

\bibliographystyle{amsplain}
\bibliography{Shen-Zhuge-4.bbl}

\begin{flushleft}
Zhongwei Shen,
 Department of Mathematics,
University of Kentucky,
Lexington, Kentucky 40506,
USA. \ \ 

E-mail: zshen2@uky.edu
\end{flushleft}

\begin{flushleft}
Jinping Zhuge, 
Department of Mathematics,
University of Kentucky,
Lexington, Kentucky 40506,
USA. \ \ 

E-mail: jinping.zhuge@uky.edu

\end{flushleft}
\medskip

\noindent \today

\end{document}